\documentclass[11pt,draft]{article}

\usepackage{amsfonts}
\usepackage{amsmath}
\usepackage{amsthm}
\usepackage{amssymb}

\textheight 205 true mm \textwidth  150 true mm \oddsidemargin 2.5true mm \evensidemargin 2.5 true mm

%
%

\newcommand{\beq}{\begin{equation}}
\newcommand{\eeq}{\end{equation}}
\newcommand{\bea}{\begin{eqnarray}}
\newcommand{\eea}{\end{eqnarray}}
\newcommand{\beas}{\begin{eqnarray*}}
\newcommand{\eeas}{\end{eqnarray*}}

%
%
\newtheorem{theorem}{Theorem}[section]

\newtheorem{definition}[theorem]{Definition}
\newtheorem{proposition}[theorem]{Proposition}

\newtheorem{corollary}[theorem]{Corollary}
\newtheorem{lemma}[theorem]{Lemma}

\newtheorem{remark}[theorem]{Remark}
\newtheorem{example}[theorem]{Example}
\newtheorem{examples}[theorem]{Examples}
\newtheorem{foo}[theorem]{Remarks}
\newtheorem*{acknowledgements}{Acknowledgements}

%
%



\newcommand{\E}{\mathbb E}


\newcommand{\bM}{\mathbb M}

\newcommand{\M}{\mathbb M}

\newcommand{\R}{\mathbb R}

\newcommand{\supp}{\operatorname{supp}}

\newcommand{\Dom}{\operatorname{Dom}}





\parindent=0pt

\title{ Poincar\'e inequality and the uniqueness of solutions for the heat equation associated with subelliptic
diffusion operators }
\author{ Bumsik Kim }

\date{Department of Mathematics, Purdue University \\
 West Lafayette, IN, USA}


\begin{document}
\maketitle

\begin{abstract}
In this paper we study global Poincar\'e inequalities on balls in a large class of sub-Riemannian manifolds satisfying
the generalized curvature dimension inequality introduced by F.Baudoin and N.Garofalo.
As a corollary, we prove the uniqueness of solutions for the subelliptic heat equation.
Our results apply in particular to CR Sasakian manifolds with Tanaka-Webster-Ricci curvature bounded from below and Carnot groups of step two.
\end{abstract}

\tableofcontents

\section{Introduction} \label{sec:framework}
\sloppy

\
In this paper, $\M$ is a $C^\infty$ connected finite dimensional manifold endowed with a smooth measure $\mu$. Let $L$ be a second-order diffusion operator on $\M$, which is symmetric, non-positive, locally subelliptic in the sense of \cite{FP1}, \cite{JSC}, \cite{Fo}, with $L1=0$.

\
By the subellipticy of $L$, there is an intrinsic distance $d(x,y)$ associated to $L$ on $\M$ (\cite{FP1},\cite{JSC}).
If we denote $\Gamma(f):=\Gamma(f,f)$ (the Bakry-\'Emery's \textit{carr\'e du champ} of $L$, see \cite{BE}) by the quadratic differential form
$ \Gamma(f,g) =\frac{1}{2}(L(fg)-fLg-gLf),$  $f,g \in C^\infty(\M),$
the distance $d(x,y)$ is defined via the notion of subunit curves with respect to the \emph{length of the gradient}, $\sqrt{\Gamma(f)}$.
Throughout this paper, we assume that our sub-Riemannian metric space $(M,d)$ is complete.

\
This class of operators includes H\"ormander type operators of order $k$ (or sum of squares of vector fields satisfying H\"ormander condition), Grushin's operator and
sub-Laplacians on Sasakian CR-manifolds (see \cite{JSC}, \cite{BG}).
For domains whose diameters are bounded in terms of the subellipticity constants, the Poincar\'e inequality was proved in \cite{Je} for the H\"ormander type operator, and
the general subelliptic Poincar\'e inequality can be found in Kusuoka and Stroock's work \cite{KS} through their probabilistic methods.

\
In general, the global Poincar\'e inequality could fail if the domain is not bounded (\cite{Je}, even in the Riemannian case).
On the Riemannian manifold with Ricci curvature bounded from below by $-K<0$, the global Poincar\'e inequality is proved by Buser \cite{Bu}:
\begin{align} \label{ineq:Buser}
 \int_{B(x,r)} | f- f_{B(x,r)} |^2 d\mu \leq C_1 r^2 e^{C_2 \sqrt{K} r } \int_{B(x,r)} | \nabla f|^2 d\mu, \quad \forall f \in C^\infty(\M), r>0.
\end{align}
This was done through geodesic arguments equipped with geometric tools such as the Bishop-Gromov comparison theorem.

\
In our subelliptic framework, due to the lack of several properties such as ellipticity of $L$ or smoothness of the distance function,
we cannot use many of the tools for Riemannian manifolds. As a recent breakthrough, \textit{the generalized curvature dimension inequality} $CD(\rho_1,\rho_2,\kappa,d)$
of Baudoin and Garofalo \cite{BG} was introduced to specify a certain subelliptic curvature condition (\cite{BG},\cite{BB},\cite{BBG},\cite{BBGM},\cite{BG_Riesz},\cite{BauWa},\cite{BauKim}). (Definition \ref{GCDI} in the present paper)

\
For example, in \cite{BG} it was shown that if the lower bound of Tanaka-Webster-Ricci tensor on a CR Sasakian manifold of codimension $1$ is given by $\rho_1 \in \R$,
the sub-Laplacian of $\M$ satisfies $CD(\rho_1,d/4,1,d)$ where the real dimension of the distribution of the CR manifold is $d$.

\
If $\rho_1$ is non-negative (an analogue of the non-negative lower bound of Ricci tensor in the Riemannian manifold),
some types of Poincar\'e inequalities were discussed in \cite{BBG}, \cite{BauKim}, including Buser's Poincar\'e inequality with $K=0$.

\
Our purpose in the present paper is to prove the Buser's global Poincar\'e inequality in the subelliptic framework with the condition $CD(-K,\rho_2,\kappa,d)$, $K> 0$:
\begin{theorem} [Poincar\'e inequality]\label{thm:buser subelliptic}
 If $\M$ satisfies $CD(-K,\rho_2,\kappa,d)$ with $K>0$, for any $r>0$, $x_0\in \M$ and $f\in C^\infty(\M)$,
\begin{align} \label{ineq:Buser subelliptic}
 \int_{B(x_0,r)} |f(x)-f_{B(x_0,r)} |^2 d\mu(x) \leq C_{p1} r^2 e^{C_{p2} Kr^2} \int_{B(x_0,r)} \Gamma(f) d\mu ,
\end{align}
where $C_{p1},C_{p2} >0 $ depend on $\rho_2,\kappa,d$.
\end{theorem}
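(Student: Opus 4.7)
The natural route is to follow Ledoux's semigroup-theoretic proof of Buser's Poincar\'e inequality and adapt it to the generalized curvature-dimension framework of Baudoin and Garofalo. The engine is the heat semigroup $P_t = e^{tL}$ together with a reverse gradient estimate of $P_t$ under $CD(-K,\rho_2,\kappa,d)$. Writing $B = B(x_0,r)$ and introducing a parameter $t>0$ to be tuned at the end, one splits
\begin{equation*}
\int_B (f - f_B)^2\, d\mu \leq 2 \int_B (f - P_t f)^2\, d\mu + 2 \int_B (P_t f - f_B)^2\, d\mu
\end{equation*}
and estimates the two pieces separately.

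\textbf{Key steps.} The first piece, $(f - P_t f)^2$ on $B$, is an approximation term: by the spectral theorem for the self-adjoint operator $-L$ one has the global bound $\|f - P_t f\|_{L^2(\M)}^2 \leq t \int_\M \Gamma(f)\, d\mu$, and the right-hand side is restricted to $B$ by multiplying $f$ by a horizontal cutoff supported on a concentric double ball (available thanks to completeness of $(\M,d)$). The second piece, $(P_t f - f_B)^2$, is a smoothing term: since $P_t f$ should be nearly constant on $B$, one writes $P_t f(x) - f_B$ as an average of differences $P_t f(x) - f(y)$ over $y \in B$, controls those differences via a subunit-curve integral of $\sqrt{\Gamma(P_t f)}$ along a path joining $x$ to $y$ (using the intrinsic distance $d$), and invokes a pointwise gradient estimate of the form
\begin{equation*}
\Gamma(P_t f)(x) \leq C_1\, e^{C_2 K t}\, P_t(\Gamma(f))(x), \qquad x \in \M,\ t>0,
\end{equation*}
which is the sub-Riemannian analogue, extracted from $CD(-K,\rho_2,\kappa,d)$, of the classical Bakry-\'Emery bound for $\mathrm{Ric} \geq -K$. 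Choosing $t \sim r^2$ balances the two contributions and produces the factor $r^2 e^{C_{p2} K r^2}\int_B \Gamma(f)\, d\mu$ claimed in \eqref{ineq:Buser subelliptic}.

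\textbf{Main obstacle.} The core of the argument is the pointwise gradient estimate above. In the Riemannian case it follows in one line from Bochner's identity; here the Bochner-type formula defining $CD(-K,\rho_2,\kappa,d)$ couples the horizontal form $\Gamma$ with the vertical form $\Gamma^Z$, and these do not decouple. One therefore runs Baudoin--Garofalo's entropic/interpolation procedure: differentiate the functional $s \mapsto P_s\bigl(\Gamma(P_{t-s} f) + a(s)\Gamma^Z(P_{t-s} f)\bigr)$ along the semigroup, apply the generalized curvature-dimension inequality with $\rho_1 = -K$, and solve the resulting Gr\"onwall-type differential inequality for a carefully tuned weight $a(s)$. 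The parameters $\rho_2, \kappa, d$ enter precisely through the choice of $a(s)$ and determine $C_1, C_2$. A secondary technical issue is producing horizontal cutoff functions with bounded $\Gamma(\chi)$; this is now standard in the sub-Riemannian setting given completeness of $(\M,d)$, but one has to keep track of how the cutoff quality interacts with the $e^{C_2 K t}$ factor when optimizing in $t$ — this is where the diffusive scaling $t \sim r^2$ is used and ultimately why the exponential appears as $e^{C K r^2}$ (rather than the Riemannian $e^{C\sqrt{K}r}$).
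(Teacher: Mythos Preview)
Your semigroup approach is natural, but it has a genuine gap in the \emph{localization}: both terms in your decomposition use the global semigroup $P_t$, and neither step lands cleanly back on the ball $B$.

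For the first term, replacing $f$ by $\chi f$ with a cutoff $\chi$ supported on $2B$ produces
\[
\int_\M \Gamma(\chi f)\,d\mu \;\lesssim\; \int_{2B}\Gamma(f)\,d\mu \;+\; r^{-2}\!\int_{2B} f^2\,d\mu,
\]
and the last term is of the same order as the left-hand side of \eqref{ineq:Buser subelliptic}; even after subtracting $f_B$ you have no control on $f$ in $2B\setminus B$, so the estimate is circular. For the second term, integrating $\sqrt{\Gamma(P_t f)}$ along subunit curves between points of $B$ yields at best a pointwise bound by $r\cdot\sup\sqrt{\Gamma(P_t f)}$; converting this to an $L^2$ bound by $\int_B \Gamma(f)$ would require a segment/pencil inequality along minimizing curves, which is exactly the geodesic-type argument the introduction flags as unavailable in this subelliptic setting. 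There is also a secondary issue: under $CD(-K,\rho_2,\kappa,d)$ with $\kappa>0$, the Baudoin--Garofalo interpolation does not produce the clean bound $\Gamma(P_t f)\le C_1 e^{C_2 Kt}P_t\Gamma(f)$ with $C_1$ bounded as $t\to 0$; the vertical form $\Gamma^Z$ does not decouple so simply.

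The paper avoids all of this by working with the \emph{Neumann} semigroup $P_t^{B,N}$ on the ball, for which localization is automatic and the energy decay $\int_B\Gamma(P_t^{B,N}f)\,d\mu\le\int_B\Gamma(f)\,d\mu$ is just spectral contractivity. The analytic input is not a gradient commutation estimate but a \emph{lower bound on the Dirichlet heat kernel on the ball},
\[
p_{r^2}^{B,D}(x,y)\;\ge\;\frac{c\,e^{-CKr^2}}{\mu(B(x_0,r/2))},\qquad x,y\in B(x_0,r/2),
\]
obtained by the Kusuoka--Stroock chaining argument from the two-sided global heat kernel bounds and doubling of \cite{BBGM}. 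Since $p^{B,N}\ge p^{B,D}$, a short variance computation with $P_{r^2}^{B,N}$ gives the weak Poincar\'e inequality on $\tfrac12 B$ versus $B$, and a Whitney covering matches the balls. The factor $e^{CKr^2}$ comes from the number of links in the chain ($\sim 1+Kr^2$), not from a Gr\"onwall argument along the semigroup.
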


\
Once the global subelliptic Poincar\'e inequalities and the volume doubling property are assumed, one can obtain Sobolev inequalities through \cite{SC2}, and we can run
Moser's iteration (\cite{Mo1},\cite{Mo2}) to recover various properties such as the uniqueness of the solution for the heat equation
through the mean value estimates of subsolution of $L$-Laplace equation.
Note that two sided heat kernel bounds and parabolic Harnack inequality for $P_t f$ were already obtained through Li-Yau type inequalities and heat kernel methods in \cite{BG},\cite{BBGM}. However this Harnack inequality does not directly yield the uniqueness of positive solutions in contrast to Li-Yau's \cite{LY}.

\
We consider the following heat equation associated with $L$.
\begin{align}\label{heat_eq}
  &\left(
         \frac{\partial}{\partial t} -L \right) u(x,t) =0
\end{align}

\
Adapting the ideas of \cite{DON},\cite{Li}, we conclude our main corollaries, the uniqueness of the solutions for the subelliptic heat equation.

\begin{theorem}[Uniqueness of positive solutions] \label{thm:uniq of positive}
Assume that $\M$ satisfies the \emph{generalized curvature-dimension inequality} $CD(-K,\rho_2,\kappa,d)$, $K>0$ with respect to $L$.
For any non-negative solution $u \in C(\M\times [0,\infty))$ of (\ref{heat_eq}), $u$ is uniquely determined by the initial data $u(\cdot,0) = f$ and
$u(x,t)\equiv P_t f(x)$, where $P_t$ is the heat semigroup generated by $L$.
\end{theorem}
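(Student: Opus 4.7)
My plan is to adapt the Donnelly--Li uniqueness argument \cite{DON},\cite{Li} to the subelliptic setting, with Theorem \ref{thm:buser subelliptic} serving as the main analytical input in place of the Riemannian Poincar\'e inequality. The generalized curvature-dimension condition $CD(-K,\rho_2,\kappa,d)$ implies a local volume doubling property on $\M$ (\cite{BG},\cite{BBGM}), which together with Theorem \ref{thm:buser subelliptic} triggers Saloff-Coste's abstract framework \cite{SC2} and produces Sobolev--Poincar\'e inequalities on balls. Standard Moser iteration \cite{Mo1},\cite{Mo2} then yields a parabolic mean value estimate for non-negative subsolutions of $\partial_t - L$, together with the associated scale-invariant parabolic Harnack inequality.

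To identify $P_t f$ as the minimal non-negative solution, I would exhaust $\M$ by relatively compact open sets $\{U_n\}$ and, for each $n$ and each truncation $f_k := \min(f,k)\chi_{U_n}$, solve the Dirichlet heat problem on $U_n$. The parabolic maximum principle places these Dirichlet solutions below $u$, and monotonically passing $n\to\infty$ and $k\to\infty$ identifies their limit with $P_t f$, whence
$$P_t f(x) \leq u(x,t),\qquad (x,t)\in\M\times[0,\infty).$$
Set $v := u - P_t f \geq 0$; then $v$ is a non-negative solution of (\ref{heat_eq}) with $v(\cdot,0)\equiv 0$, and it suffices to show $v\equiv 0$.

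Fix $x_0\in\M$, and for each $R>0$ choose a Lipschitz cutoff $\phi$ with $\phi\equiv 1$ on $B(x_0,R)$, supported in $B(x_0,2R)$ and satisfying $\Gamma(\phi)\leq C/R^2$. Multiplying (\ref{heat_eq}) by $\phi^2 v$, integrating by parts using the symmetry of $L$ (noting $\Gamma(\phi^2 v,v) = \phi^2 \Gamma(v) + 2\phi v\,\Gamma(\phi,v)$), and absorbing the cross term by Cauchy--Schwarz, yields the Karp--Li style inequality
$$\int_{B(x_0,R)} v(\cdot,s)^2\, d\mu \leq \frac{C}{R^2}\int_0^s \int_{B(x_0,2R)} v^2\, d\mu\, d\tau.$$
Iterating this on an expanding sequence of concentric balls, and controlling $v^2$ pointwise through both the mean value estimate of the first paragraph and the volume growth bound $\mu(B(x_0,r))\leq C_1 e^{C_2 r}$ produced by $CD(-K,\rho_2,\kappa,d)$, the zero initial datum propagates to force $v\equiv 0$, hence $u \equiv P_t f$.

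The main obstacle is closing this last iteration: one must produce a quantitative a priori control on $v$ so that the exponential volume factor is overcome by the factor $C/R^2$ uniformly along the iteration, in the spirit of the Karp--Grigor'yan integral criterion for uniqueness of non-negative solutions. In the smooth Riemannian framework such control rests on Laplacian comparison applied to the distance function; here, since the sub-Riemannian intrinsic distance is not smooth and $L$ is degenerate, all the required quantitative bounds must be extracted solely from the semigroup $P_t$, Theorem \ref{thm:buser subelliptic}, and the doubling property --- this is precisely the point where the subelliptic setting departs from its Riemannian counterpart.
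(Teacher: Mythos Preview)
Your overall architecture---minimality via Dirichlet exhaustion, reduction to a non-negative solution $v$ with zero initial data, and a Karp--Li/Grigor'yan energy iteration---matches the paper's. The gap you flag at the end is real, and the paper closes it by an idea you have not supplied.

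The missing input is a Tikhonov-type a priori growth bound $v(x,t)\le C_1 e^{C_2 d(p,x)^2}$. The paper obtains it as follows. First, rather than working directly with $w:=u-P_t f$, it sets $v(x,t):=\int_0^t w(x,s)\,ds$; then $L v(\cdot,t)=w(\cdot,t)\ge 0$, so $v(\cdot,t)$ is \emph{subharmonic}, not merely a heat subsolution. This permits the use of the \emph{elliptic} $L^1$ mean value inequality (derived from Theorem \ref{thm:buser subelliptic} via Sobolev and Moser iteration), avoiding parabolic iteration altogether. Second, minimality applied again to $v$ gives $v(p,t+T)\ge \int_\M p_T(p,y)\,v(y,t)\,d\mu(y)$; inserting the heat kernel \emph{lower} bound (\ref{ineq:hk LB}) over a ball $B$ containing $x$ yields $\int_B v(\cdot,t)\,d\mu\le C e^{C d(p,x)^2}\,v(p,t+T)$. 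Combining with the elliptic $L^1$ mean value estimate for the subharmonic $v(\cdot,t)$ produces the pointwise Gaussian bound. With Tikhonov's condition in hand, the energy iteration you describe (the paper simply cites Grigor'yan, Corollary 11.10) goes through.

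Two smaller points. Your proposal relies on a parabolic mean value estimate, whereas the paper only needs the elliptic one for subharmonic functions---the time-integral trick is precisely what buys this simplification. Also, under $CD(-K,\rho_2,\kappa,d)$ the available volume bound is $\mu(B(x_0,r))\le C_1 e^{C_2 K r^2}$ (quadratic exponent, see (\ref{ineq:doubling2})), not $e^{C_2 r}$; this is harmless once the Gaussian growth of $v$ is established, but it does mean your iteration cannot be closed with a merely sub-Gaussian control.
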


\begin{theorem} [Uniqueness of $L^p$ solutions] \label{thm:uniq of Lp} For $1< p <\infty$, $L^p$ solution $u(x,t)$ of (\ref{heat_eq}) defined on $\M\times (0,\infty)$ is
uniquely determined by the initial data $f\in L^p(\M)$, i.e. $u(\cdot,t) \in L^p(\M) $ for any $t>0$ and $u\xrightarrow{L^p} f$ as $t\rightarrow 0$. \\
If $CD(-K,\rho_2,\kappa,d)$, $K>0$ is satisfied, we have an unique
$L^1$ solution $u(x,t)$ of (\ref{heat_eq}) with the initial data $u\xrightarrow{L^1} f\in L^1(\M)$ as $t\rightarrow 0$.
\end{theorem}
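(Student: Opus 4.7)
The theorem splits into two essentially independent assertions. The case $1<p<\infty$ requires no curvature-dimension assumption and follows from a cutoff energy estimate in the spirit of \cite{DON,Li}. The $L^1$ case does need $CD(-K,\rho_2,\kappa,d)$, precisely because the previous energy method degenerates as $p\to 1$ and one has to exploit Theorem \ref{thm:buser subelliptic} (via standard heat-kernel machinery) to produce the missing global information on the semigroup.

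\emph{The case $1<p<\infty$.} By linearity it suffices to consider an $L^p$ solution $u$ with $u(\cdot,t)\to 0$ in $L^p$ as $t\to 0^+$ and to prove $u\equiv 0$. Choose smooth cutoffs $\phi_R\in C^\infty_0(\M)$ supported in $B(x_0,2R)$, equal to $1$ on $B(x_0,R)$, with $\Gamma(\phi_R)\le C/R^2$; such cutoffs exist because the intrinsic distance $d$ is itself subunit. Test (\ref{heat_eq}) against $|u|^{p-2}u\,\phi_R^2$, regularised as $(u^2+\varepsilon)^{p/2-1}u\,\phi_R^2$ when $p<2$, integrate by parts against $L$, and absorb the cross term $\int|u|^{p-1}\phi_R\,|\Gamma(u,\phi_R)|\,d\mu$ into the diffusion term $\int|u|^{p-2}\phi_R^2\,\Gamma(u)\,d\mu$ via a weighted Cauchy--Schwarz. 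Letting $\varepsilon\to 0$ yields the Caccioppoli-type inequality
\begin{align*}
\frac{d}{dt}\int_\M |u|^p\phi_R^2\,d\mu\;\le\;\frac{C_p}{R^2}\int_{B(x_0,2R)}|u|^p\,d\mu.
\end{align*}
Integrating in $t\in[0,T]$ and sending $R\to\infty$, the right-hand side vanishes by the $L^p$ hypothesis on $u$, forcing $u(\cdot,T)\equiv 0$.

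\emph{The case $p=1$.} Here the Cauchy--Schwarz absorbing factor $p/(p-1)$ diverges, so a different route is needed. Combine Theorem \ref{thm:buser subelliptic} with the (locally) doubling property available under $CD(-K,\rho_2,\kappa,d)$ to invoke the Saloff-Coste framework \cite{SC2}, obtaining a global parabolic Harnack inequality and a Gaussian upper bound for the heat kernel $p_t(x,y)$. Since $\mu(B(x_0,r))$ then grows at most exponentially, the Grigor'yan integral test $\int^\infty r/\log\mu(B(x_0,r))\,dr=\infty$ is satisfied, so the semigroup is stochastically complete, $P_t 1\equiv 1$. The parabolic maximum principle then gives uniqueness of bounded solutions (which simultaneously proves Theorem \ref{thm:uniq of positive}), and $L^1$-uniqueness follows by the usual duality: for an $L^1$-solution $u$ with zero initial data and any $\varphi\in C^\infty_0(\M)$,
\begin{align*}
\int_\M u(x,T)\varphi(x)\,d\mu=\int_\M u(x,0)P_T\varphi(x)\,d\mu=0,
\end{align*}
since $P_T\varphi\in L^\infty$ solves the backward equation.

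\textbf{Main obstacle.} The delicate step is the $L^1$ case: the energy method collapses as $p\to 1$, and one must upgrade the local Poincaré inequality with exponentially growing constant $e^{C_{p2}Kr^2}$ (Theorem \ref{thm:buser subelliptic}) into a global statement about the semigroup. Checking that the Buser constants are compatible with the hypotheses of \cite{SC2}, and that the resulting heat-kernel and volume bounds meet the Grigor'yan criterion for stochastic completeness, is where the substantive work lies.
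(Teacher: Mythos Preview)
Your treatment of $1<p<\infty$ is correct and is essentially the same energy/cutoff argument the paper gives (the paper works with the nonnegative subsolution $v=|u|$ and the test function $\psi^2 v^{p-1}$, which is your computation after the regularisation is removed).

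The $L^1$ case, however, has a genuine gap. Your duality identity $\int u(\cdot,T)\varphi = \int u(\cdot,0)\,P_T\varphi$ amounts to showing that $\tau\mapsto \int_\M u(\cdot,\tau)\,P_{T-\tau}\varphi\,d\mu$ is constant, and this requires $\int (Lu)\,g=\int u\,Lg$ for $g=P_{T-\tau}\varphi$. But $g$ is not compactly supported: inserting the cutoffs $h_k$ of (H.1) and integrating by parts, the error terms contain either $\int g\,\Gamma(u,h_k)\,d\mu$, which needs $\sqrt{\Gamma(u)}\in L^1$ (not known), or $\int u\,g\,Lh_k\,d\mu$, and (H.1) gives no control on $\|Lh_k\|_\infty$. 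In the subelliptic setting there is no Laplacian comparison for the distance function, so this boundary term cannot be disposed of by the standing hypotheses. This is precisely the obstacle that forces a different route for $p=1$; it is not a matter of ``checking compatibility with \cite{SC2}.''

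Two smaller points: stochastic completeness is already assumed in (H.3), so invoking Grigor'yan's volume test is redundant (and the volume bound here is $e^{cKr^2}$, not exponential, though the test still passes). More importantly, your parenthetical that stochastic completeness ``simultaneously proves Theorem \ref{thm:uniq of positive}'' is incorrect: Theorem \ref{thm:uniq of positive} concerns nonnegative solutions, which need not be bounded, and its proof in the paper uses the $L^1$ mean value inequality to obtain a Tikhonov growth bound.

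For comparison, the paper's $L^1$ argument avoids the duality altogether. It first proves a subelliptic Hamilton inequality and a gradient bound for $\ln p_t$, then uses these together with the $L^1$ mean value estimate (Corollary \ref{thm:Lp mean value p<2}) to establish an $L^1$ Liouville theorem: every nonnegative $L^1$ subharmonic function is constant. The parabolic $L^1$-uniqueness is then reduced to this elliptic statement by integrating in time: for a nonnegative subsolution $v$ with $\|v(\cdot,t)\|_{L^1}\to 0$, the function $f(x)=\int_0^T \max(0,\,v(x,t+\epsilon)-P_t v(\cdot,\epsilon)(x))\,dt$ is $L^1$ and subharmonic, hence constant, hence $Lf=0$, which forces $v(\cdot,t+\epsilon)\le P_t v(\cdot,\epsilon)\to 0$. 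The curvature assumption enters through the mean value inequality and the heat-kernel gradient bound, exactly where your duality step breaks down.
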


\
Theorem \ref{thm:uniq of positive} improves Li-Yau/Harnack inequality for the subelliptic $L$ which is proved for $P_t f$ in \cite{BG},\cite{BBGM}.
(Find notations in the following section. See Remark \ref{rmk:A epsilon} for $\mathcal A_\epsilon$.)
\begin{corollary} [Li-Yau type inequality, \cite{BG}] \label{Cor:LiYau}
Assume $CD(\rho_1,\rho_2,\kappa,d)$, $\rho_1\in \R$. Any non-negative solution $u(x,t)\in \mathcal A_\epsilon$ of (\ref{heat_eq})
satisfies the following inequality:
\begin{align*}
 &\Gamma( \ln u ) + \frac{2\rho_2}{3} t \Gamma^Z(\ln u ) -\left( 1+\frac{3\kappa}{2\rho_2}-\frac{2\rho_1}{3}t \right) \frac{Lu}{u}\\
  &\leq \frac{d\rho_1^2}{6}t - \frac{d\rho_1}{2}\left(1+\frac{3\kappa}{2\rho_2}\right)
      + \frac{d}{2t} \left(1+\frac{3\kappa}{2\rho_2} \right)^2 .
\end{align*}
\end{corollary}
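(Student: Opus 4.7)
The plan is to reduce the statement to the Li--Yau inequality for the semigroup $P_t f$ already established by Baudoin and Garofalo in \cite{BG}, by using the uniqueness of positive solutions (Theorem \ref{thm:uniq of positive}) to identify an arbitrary $u \in \mathcal{A}_\epsilon$ with a heat semigroup solution. Since the corollary is phrased as an \emph{improvement} of the Baudoin--Garofalo bound from $P_t f$ to all non-negative solutions in $\mathcal{A}_\epsilon$, the new content is precisely this identification, not a new pointwise gradient computation.

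First I would verify that Theorem \ref{thm:uniq of positive} applies under the present hypothesis. If $\rho_1 \geq 0$ the inequality $CD(\rho_1,\rho_2,\kappa,d)$ trivially implies $CD(-K,\rho_2,\kappa,d)$ for any $K>0$; if $\rho_1 < 0$, choose $K = -\rho_1 > 0$. In either case the uniqueness theorem is available. Applying it to the non-negative continuous solution $u$, with initial data $f := u(\cdot,0)$, yields $u(x,t) = P_t f(x)$ for all $(x,t)\in \M\times[0,\infty)$.

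Second, I would invoke directly the pointwise Li--Yau inequality of \cite{BG}, which is established for $v(x,t) := P_t f(x)$ whenever $f$ is in a suitable class (bounded or non-negative with adequate decay), using the generalized curvature-dimension inequality $CD(\rho_1,\rho_2,\kappa,d)$. Substituting $v = u$ from Step 1 gives the stated inequality on $\Gamma(\ln u)$, $\Gamma^Z(\ln u)$ and $Lu/u$.

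The main obstacle I anticipate lies in justifying that the initial data $f = u(\cdot,0)$ inherited from a general $u \in \mathcal{A}_\epsilon$ is regular and integrable enough for the $P_t f$ Li--Yau inequality of \cite{BG} to apply without modification; this is exactly what the definition of the class $\mathcal{A}_\epsilon$ (see Remark \ref{rmk:A epsilon}) should be designed to guarantee. If not, one would approximate by $u(\cdot,\epsilon)$ for small $\epsilon > 0$, apply the inequality on the shifted solution $(x,t)\mapsto u(x,t+\epsilon) = P_t(u(\cdot,\epsilon))(x)$, and pass to the limit $\epsilon\to 0^+$ using continuity in $t$. Beyond this regularity bookkeeping, no new geometric estimate is needed.
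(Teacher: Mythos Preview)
Your approach is correct and matches the paper's own (implicit) argument: the paper presents Corollary~\ref{Cor:LiYau} as an immediate consequence of Theorem~\ref{thm:uniq of positive} combined with the Li--Yau inequality already proved for $P_t f$ in \cite{BG}, and gives no separate proof. Your reduction---identifying $u$ with $P_t f$ via uniqueness and then invoking \cite{BG}---is exactly the intended deduction, and your handling of the sign of $\rho_1$ and the role of $\mathcal{A}_\epsilon$ is appropriate.
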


\begin{corollary} [Harnack inequality, \cite{BBGM},\cite{BG}] \label{Cor:harnack}
Assume $CD(-K,\rho_2,\kappa,d)$, $K\geq 0$. Then any non-negative solution $u \not\equiv 0$ of (\ref{heat_eq}) satisfies for $x,y\in \M$, $s<t\in \R_+$,
\begin{align*}
 \frac{u(x,s)}{u(y,t)} \leq \left(\frac{t}{s}\right)^{\frac{D}{2}} \exp\left(\frac{d K }{4}(t-s) + \frac{K}{12} d(x,y)^2 +
  \frac{d(x,y)^2}{4(t-s)}\left( 1+\frac{3\kappa}{2\rho_2} \right) \right).
\end{align*}
\end{corollary}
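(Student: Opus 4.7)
The plan is to derive the Harnack inequality from the Li--Yau estimate of Corollary \ref{Cor:LiYau} by integrating along a carefully chosen space--time path from $(y,t)$ to $(x,s)$, following the classical Li--Yau scheme adapted to the subelliptic setting. First I would specialise Corollary \ref{Cor:LiYau} to $\rho_1 = -K$; since $\rho_2 > 0$ and $\Gamma^Z(\ln u) \geq 0$, the $\Gamma^Z$ term can be discarded, producing the reduced pointwise estimate
\begin{equation*}
\Gamma(\ln u) \leq \Bigl(\alpha + \tfrac{2K\tau}{3}\Bigr)\,\partial_\tau \ln u + \Psi(\tau),
\end{equation*}
where $\alpha := 1 + \tfrac{3\kappa}{2\rho_2}$, $\Psi(\tau) := \tfrac{dK^2\tau}{6} + \tfrac{dK\alpha}{2} + \tfrac{d\alpha^2}{2\tau}$, and I have used $\partial_\tau u = Lu$ on solutions of \eqref{heat_eq}. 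To avoid dividing by zero one works with $u+\delta$ and lets $\delta\downarrow 0$ at the end.

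Next I would pick a horizontal curve $\gamma\colon[0,1]\to\M$ from $y$ to $x$ parametrised at constant sub-Riemannian speed $\ell(\gamma)$, use the linear time interpolation $\tau(r) = (1-r)t + r s$, and set $\phi(r) := \ln u(\gamma(r),\tau(r))$. Applying Cauchy--Schwarz to $\langle \nabla \ln u, \dot\gamma\rangle$ and Young's inequality with weight $\varepsilon(r) > 0$ gives
\begin{equation*}
\phi'(r) \leq \tfrac{\varepsilon}{2}\,\Gamma(\ln u) + \tfrac{\ell(\gamma)^2}{2\varepsilon} - (t-s)\,\partial_\tau \ln u.
\end{equation*}
The optimal weight $\varepsilon(r) = 2(t-s)/(\alpha + 2K\tau(r)/3)$ is designed so that substituting the reduced Li--Yau estimate cancels the $\partial_\tau \ln u$ contribution exactly; integrating $\phi'$ over $[0,1]$ and taking the infimum $\ell(\gamma)\downarrow d(x,y)$ over admissible horizontal curves yields
\begin{equation*}
\ln\frac{u(x,s)}{u(y,t)} \leq \int_s^t \frac{\Psi(\tau)\,d\tau}{\alpha + 2K\tau/3} + \frac{d(x,y)^2}{4(t-s)^2}\int_s^t \Bigl(\alpha + \tfrac{2K\tau}{3}\Bigr)\,d\tau.
\end{equation*}

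To reach the explicit form of Corollary \ref{Cor:harnack} I would estimate the two time integrals summand by summand. The contribution of $\tfrac{d\alpha^2}{2\tau}$ in $\Psi$ is bounded by $\tfrac{d\alpha}{2\tau}$ after cancellation, yielding $\tfrac{d\alpha}{2}\log(t/s)$ upon integration and hence the prefactor $(t/s)^{D/2}$ with effective dimension $D\asymp d\alpha$; the two $K$-dependent summands of $\Psi$ contribute $O(K(t-s))$, producing the $\exp(\tfrac{dK}{4}(t-s))$ factor; the geometric integral gives $\tfrac{\alpha\, d(x,y)^2}{4(t-s)}$ together with a $K d(x,y)^2$ contribution that, after elementary rearrangement of a $(t+s)/(t-s)$ factor into the time-linear exponential, produces the $\exp(\tfrac{K}{12}d(x,y)^2)$ term.

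The main obstacle I anticipate is precisely this last bookkeeping step: because of the $K > 0$ regime, the coefficient of $\partial_\tau \ln u$ in Li--Yau is genuinely time-dependent, and the Gaussian piece and the exponential-in-$K$ corrections do not decouple cleanly. Extracting the precise constants $\tfrac{D}{2}$, $\tfrac{1}{12}$, $\tfrac{1}{4}$ requires sharp estimation of each summand using both one-sided bounds $\alpha \leq \alpha + 2K\tau/3$ and $2K\tau/3 \leq \alpha + 2K\tau/3$, together with a little algebra to absorb residual terms into the $(t-s)$-linear exponent. The path minimisation itself is routine once Cauchy--Schwarz has been invoked, since no actual smooth length-minimising sub-Riemannian geodesic is needed.
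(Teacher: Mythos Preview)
Your approach is not what the paper does, and it misses the point of why this is stated as a \emph{corollary} here. In \cite{BG},\cite{BBGM} the Harnack inequality is already established, but only for $u=P_t f$ (equivalently, for solutions known a priori to lie in the class $\mathcal A_\epsilon$ where the semigroup calculus of \cite{BG} is justified; see Remark~\ref{rmk:A epsilon}). The contribution of Corollary~\ref{Cor:harnack} in this paper is to drop that restriction: by Theorem~\ref{thm:uniq of positive}, any non-negative solution $u$ with $u(\cdot,0)=f$ coincides with $P_t f$, and the Harnack inequality from \cite{BBGM},\cite{BG} therefore transfers to $u$. That is the entire argument.

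Your path-integration of Corollary~\ref{Cor:LiYau} is the classical Li--Yau scheme and is essentially what \cite{BG},\cite{BBGM} do to obtain Harnack for $P_t f$ in the first place. But as a proof of Corollary~\ref{Cor:harnack} it has a genuine gap: Corollary~\ref{Cor:LiYau} is stated only for $u\in\mathcal A_\epsilon$, so you cannot invoke it for an arbitrary non-negative solution without first knowing $u=P_t f$ (or performing some approximation that puts $u$ into $\mathcal A_\epsilon$). Either route brings you back to Theorem~\ref{thm:uniq of positive}. There is also a smaller issue in your constant bookkeeping: after your choice of $\varepsilon(r)$ the geometric integral produces
\[
\frac{d(x,y)^2}{4(t-s)}\Bigl(\alpha+\frac{K(t+s)}{3}\Bigr),
\]
and the factor $(t+s)/(t-s)$ cannot be ``absorbed into the time-linear exponent'' as you suggest, since it blows up as $s\uparrow t$; recovering the exact constant $K/12$ requires a more careful treatment than the one you sketch.
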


\section{Preliminaries }

\
Our subelliptic operator $L$ is described as follows:
$L$ is a second-order diffusion operator with real $C^\infty$ coefficients on $\M$.
There exists a neighborhood $U$ of $x\in\M$ and a constant $C>0$, such that for any $f\in C_0^\infty(U)$,
\begin{align} \label{subellipticity}
 \| f \|_\epsilon ^2 \leq C(\left| \left< f , Lf \right> \right| + \| f\|_2^2 ) ,
\end{align}
where $\|f \|_\epsilon=\left(\int |\hat{u}(\xi)|^2 (1+| \xi|^2 )^\epsilon d\xi \right)^{1/2}$ is the Sobolev norm of order $0<\epsilon<1$,
and $\left<\cdot,\cdot\right>, \|\cdot\|_2 $ are respectively the inner product and the norm of $ L^2 (\M,\mu ) $. Also $L$ is symmetric, non-positive and has
zero order term, i.e.:
\begin{equation*}
 \int_\bM f L g d\mu=\int_\bM g Lf d\mu, \quad \int_\bM f L f d\mu \le 0, \quad L 1 =0,
\end{equation*}
for every $f , g \in C^ \infty_0(\bM)$.

\
The intrinsic sub-Riemannian metric associated with $L$ is defined by the minimal length of subunit curve:
\begin{align*}
 d(x,y) = \inf &\big\{ T \big| \exists \text{ Lipschitz } \gamma:[0,T] \rightarrow \M , \gamma(0)=x,\gamma(T)=y, \\
   &|\frac{d}{dt}f(\gamma(t))|\leq \sqrt{\Gamma f(\gamma(t))}, \forall f\in C^\infty(\M),\text{ almost every }t\in[0,T]  \big\},
\end{align*}
where $ \Gamma(f,g) =\frac{1}{2}(L(fg)-fLg-gLf)$, $\Gamma(f)=\Gamma(f,f)$. We assume that the metric space $(\M,d)$ is complete.

\
Note that this class of operators strictly includes H\"ormander's type operators $-\sum_{i=1}^{m} X_i^* X_i $ ( $X_i$'s are the smooth vector fields satisfying
H\"ormander condition of order $k$ on $\M$, and $X_i^*$ is the formal adjoint of $X_i$ in $L^2(\M,\mu)$ ). Following Strichartz \cite{Str},
the completeness assumption of $(\M,d)$ yields that $L$ is essentially self-adjoint on $C_0^\infty(\M)$. So we can denote by $L$ the unique self-adjoint extension
(the Friedrichs extension) of $L$ in $L^2(\M,\mu)$. Maximum principle(\cite{Bon}) and H\"ormander's hypoellipticity of $L$ are well-known.
See \cite{FP1},\cite{Je},\cite{BG},\cite{NSW} for more properties of $L$.


\
We follow the steps in \cite{BG} to introduce the curvature assumption on our subelliptic framework.
In addition to $\Gamma$, we assume that $\M$ is endowed with another smooth symmetric bilinear differential form,
indicated with $\Gamma^Z$, satisfying for $f,g \in C^\infty(\M)$
\[
 \Gamma^Z(fg,h) = f\Gamma^Z(g,h) + g \Gamma^Z(f,h),
\]
and $\Gamma^Z(f) = \Gamma^Z(f,f) \ge 0$.

\
We make the following assumptions that will be in force throughout the paper:

\begin{itemize}
\item[(H.1)] There exists an increasing
sequence $h_k\in C^\infty_0(\bM)$   such that $h_k\nearrow 1$ on
$\bM$, and \[
||\Gamma (h_k)||_{\infty} +||\Gamma^Z (h_k)||_{\infty}  \to 0,\ \ \text{as} \ k\to \infty.
\]
\item[(H.2)]
For any $f \in C^\infty(\bM)$ one has
\[
\Gamma(f, \Gamma^Z(f))=\Gamma^Z( f, \Gamma(f)).
\]
\item[(H.3)]
For every $t \ge 0$, $P_t 1=1$ and for every $f \in C_0^\infty(\bM)$ and $T \ge 0$, one has
\[
\sup_{t \in [0,T]} \| \Gamma(P_t f)  \|_{ \infty}+\| \Gamma^Z(P_t f) \|_{ \infty} < +\infty,
\]
where $P_t$ is the heat semigroup generated by $L$.

\end{itemize}

\
(Details about the assumptions are discussed in \cite{BG}) The assumption (H.1) is implied by the completeness of the metric space. In the sub-Riemannian geometries covered by the present work, the assumption (H.2) means that the torsion of the sub-Riemannian connection is vertical (for instance, Sasakian condition of CR manifolds). Removing this assumption in certain cases is discussed in \cite{BauWa}. Assumption (H.3) is necessary to rigorously justify the Bakry-\'Emery type arguments. It is a consequence of the generalized curvature dimension inequality below in many examples (see \cite{BG}).

\
In addition to $\Gamma$ and $\Gamma^Z$, we denote the following second order differential bilinear forms: for any $f,g\in C^\infty(\M)$,
\begin{equation}
\Gamma_{2}(f,g) = \frac{1}{2}\big[L\Gamma(f,g) - \Gamma(f,Lg)-\Gamma (g,Lf)\big],
\end{equation}
\begin{equation}
\Gamma^Z_{2}(f,g) = \frac{1}{2}\big[L\Gamma^Z (f,g) - \Gamma^Z(f,Lg)-\Gamma^Z (g,Lf)\big].
\end{equation}
As for $\Gamma$ and $\Gamma^Z$, we denote $\Gamma_2(f) = \Gamma_2(f,f)$, $\Gamma_2^Z(f) = \Gamma^Z_2(f,f)$.

The following curvature dimension condition was introduced in \cite{BG}.

\begin{definition}[\cite{BG}, generalized curvature dimension inequality] \label{GCDI}
We say that $L$ satisfies the \emph{generalized curvature dimension inequality} \emph{CD}$(\rho_1,\rho_2,\kappa,d)$ on $\M$ if
there exist constants $\rho_1 \in \mathbb{R} $,  $\rho_2 >0$, $\kappa \ge 0$, and $0< d < \infty$ such that the inequality
\begin{equation*}
\Gamma_2(f) +\nu \Gamma_2^Z(f) \ge \frac{1}{d} (Lf)^2 +\left( \rho_1 -\frac{\kappa}{\nu} \right) \Gamma(f) +\rho_2 \Gamma^Z(f)
\end{equation*}
 holds for every  $f\in C^\infty(\bM)$ and every $\nu>0$.
\end{definition}

\
The inequality $CD(\rho_1,\rho_2,\kappa,d)$ turns out to be equivalent to lower bounds on intrinsic curvature tensors in \cite{BG}.
The following is an exemplary curvature condition implying $CD(\rho_1,\rho_2,\kappa,d)$ on CR manifold.

\begin{proposition}\cite{BG}
Let $(\bM,\theta)$ be a complete \emph{CR} Sasakian manifold  with real dimension $2n+1$.
The Tanaka-Webster Ricci tensor satisfies the bound
\[
 \emph{Ric}_x(v,v)\ \ge \rho_1|v|^2, \forall x \in \M, \forall v \in \mathcal H_x,
\]
if and only if the curvature dimension inequality \emph{CD}$(\rho_1,\frac{d}{4},1,d)$ holds with $d = 2n$ and $\Gamma^Z(f)=(Tf)^2$ and the hypothesis (H.1),(H.2),(H.3) are satisfied.
\end{proposition}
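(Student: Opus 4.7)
The plan is to prove the equivalence by means of the CR version of the Bochner--Weitzenb\"ock identity combined with a Cauchy--Schwarz argument parametrized by $\nu$. Fix notation: write $T$ for the Reeb vector field, $J$ for the complex structure on the horizontal distribution $\Ho$ of real rank $d=2n$, and $\nabla$ for the Tanaka--Webster connection, so that $\Gamma(f)=|\nh f|^2$, $\Gamma^Z(f)=(Tf)^2$, and $L=\Delta_b=\operatorname{tr}_{\Ho}\nabla^2$.

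For the forward direction, assuming the Tanaka--Webster--Ricci lower bound, I would derive two pointwise identities. The CR Bochner identity on a Sasakian manifold gives
\[
 \Gamma_2(f) = \|\nabla^2_{\Ho} f\|^2 + \Ric(\nh f,\nh f) - 2\langle J\nh f,\nh Tf\rangle + \tfrac{d}{4}(Tf)^2,
\]
while a direct computation using the Sasakian identity $[L,T]=0$ (a consequence of the vanishing pseudo-hermitian torsion) yields $\Gamma_2^Z(f)=\Gamma(Tf)=|\nh Tf|^2$. Forming $\Gamma_2(f)+\nu\Gamma_2^Z(f)$ and applying
\[
 2\,|\langle J\nh f,\nh Tf\rangle| \le \nu\,|\nh Tf|^2 + \nu^{-1}\,|\nh f|^2
\]
absorbs the sign-indefinite cross term into the $\nu\Gamma(Tf)$ contribution and produces precisely the $\ka/\nu$ penalty with $\ka=1$. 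The remaining quadratic-in-Hessian term is controlled by the trace Cauchy--Schwarz inequality $(Lf)^2\le d\,\|\nabla^2_{\Ho}f\|^2$, since $Lf$ is the trace of $\nabla^2_{\Ho}f$ over the $d$-dimensional horizontal frame, yielding exactly $CD(\rho_1,d/4,1,d)$.

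For the converse, I would test the $CD$ inequality at a fixed point $x_0$ against functions tailored via Tanaka--Webster normal coordinates. Given $v\in\Ho_{x_0}$ with $|v|=1$, choose $f\in C^\infty(\M)$ satisfying $\nh f(x_0)=v$, $\nabla^2_{\Ho}f(x_0)=0$, and $Tf(x_0)=0$. Substituting into $CD(\rho_1,d/4,1,d)$ and letting $\nu\to\infty$ kills the $\ka/\nu$ term and all vertical contributions, leaving $\rho_1|v|^2\le\Ric(v,v)$ pointwise.

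The main obstacle is the precise derivation of the CR Bochner identity, and in particular isolating the cross term $\langle J\nh f,\nh Tf\rangle$ with the correct coefficient and the constant $d/4$ in front of $(Tf)^2$: both calculations rely crucially on the Sasakian hypothesis (vanishing horizontal pseudo-hermitian torsion) and are what pin down $\rho_2=d/4$ and $\ka=1$. The structural assumptions are then routine: (H.1) follows from metric completeness via a standard cutoff construction, (H.2) from the Sasakian identity ensuring $[T,X]\in\Ho$ for horizontal $X$ combined with $[L,T]=0$, and (H.3) can be recovered a posteriori from $CD(\rho_1,d/4,1,d)$ itself through the Baudoin--Garofalo semigroup commutation estimates.
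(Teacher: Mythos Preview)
The paper does not supply its own proof of this proposition; it is quoted verbatim as a result from \cite{BG} and left unproved. Your outline is precisely the argument carried out in \cite{BG}: the CR Bochner identity in the Sasakian case, the identity $\Gamma_2^Z(f)=\Gamma(Tf)$ coming from $[L,T]=0$, the $\nu$-parametrized Cauchy--Schwarz split of the cross term $2\langle J\nh f,\nh Tf\rangle$ producing $\kappa=1$, and the trace inequality $(Lf)^2\le d\,\|\nabla^2_{\Ho}f\|^2$ for the dimension parameter. Your caveat that the exact constants $d/4$ and the coefficient on the cross term are the delicate part is well placed---that computation is where all the content lies and is done in detail in \cite{BG}; once it is in hand the rest of your sketch (including the converse via test functions in normal coordinates and the verification of (H.1)--(H.3)) is routine.
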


\
With the curvature inequality condition assumed, various aspects on sub-Riemannian manifolds have been discovered in \cite{BG},\cite{BB},\cite{BBG},\cite{BBGM},\cite{BG_Riesz},\cite{BauWa},\cite{BauKim}. In particular, we have the following essential properties -
two-sided heat kernel bounds and volume doubling property of balls with exponential term.

\begin{proposition}
\cite{BBGM} If we assume the curvature condition $CD(-K,\rho_2,\kappa,d)$, $K>0$ on $\M$, for any $x,y \in \M$, $t>0$, $r>0$,
\begin{align}
&\label{ineq:hk LB}\quad p_t(x,y) \geq \frac{C_{1}}{\mu(B(x,\sqrt t))} \exp\left(- \frac{D}{2d} \frac{d(x,y)^2}{t} - C_{2} K( t+  d(x,y)^2 ) \right) \\
&\label{ineq:hk UB}\quad p_t(x,y) \leq \frac{C_{3}}{\mu(B(x,\sqrt t))^{1/2} \mu(B(y,\sqrt t))^{1/2}} \exp\left( C_{4} Kt-\frac{d(x,y)^2}{5t} \right) \\
&\label{ineq:doubling}\quad \mu(B(x,2r)) \leq C_{d1} \exp(C_{d2} K r^2) \mu(B(x,r)) ,
\end{align}
where $D=d\left(1+\frac{3\kappa}{2\rho_2}\right)$, $C_1,C_2,C_3,C_4,C_{d1},C_{d2}$ are positive and determined by $\rho_2,\kappa,d$.
\end{proposition}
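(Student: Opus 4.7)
The plan is to derive everything from the generalized Li--Yau inequality of Corollary \ref{Cor:LiYau} applied with $\rho_1=-K$, and then squeeze the heat kernel between its on-diagonal estimates via a parabolic Harnack chain. First I would specialise the Li--Yau inequality to $u(x,t)=p_t(x,y)$ (a non-negative solution to the heat equation), then integrate the resulting pointwise inequality along a space-time curve $s\mapsto(\gamma(s),T-s)$ where $\gamma$ is a subunit curve joining $x$ to $y$ parametrised on $[s,t]$. Because the Li--Yau bound controls $\Gamma(\ln u)$ (and hence $|\frac{d}{ds}\ln u(\gamma,T-s)|$) by $\tfrac{Lu}{u}$ plus lower-order terms involving $K$, the standard optimisation in the intermediate time/space variable produces the Harnack estimate of Corollary \ref{Cor:harnack}, in which the $\tfrac{dK}{4}(t-s)$ and $\tfrac{K}{12}d(x,y)^2$ terms arise precisely from the negative curvature lower bound $\rho_1=-K$.

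With the Harnack inequality in hand, the upper bound \eqref{ineq:hk UB} follows from the Davies--Gaffney--type integrated maximum principle combined with an on-diagonal $L^2\to L^\infty$ estimate: the Li--Yau inequality together with $\tfrac{d}{dt}\|P_t f\|_2^2=-2\mathcal{E}(P_t f, P_t f)$ yields a Nash inequality, which gives $p_t(x,x)\le C/\mu(B(x,\sqrt t))$ (up to the $e^{C_4 Kt}$ correction from the curvature), and the off-diagonal Gaussian factor $e^{-d(x,y)^2/5t}$ comes from Grigoryan's integrated maximum principle applied to $\int e^{\xi(x)}p_t(x,y)^2 d\mu$ for a suitable Lipschitz weight $\xi$. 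For the lower bound \eqref{ineq:hk LB} I would combine the conservation of mass $P_t 1 =1$, the upper bound applied on the complement of a ball $B(x,\Lambda\sqrt t)$, and an iteration of the parabolic Harnack inequality along a chain of $O(d(x,y)^2/t)$ balls of radius $\sqrt t$; this produces the Gaussian lower bound $e^{-C d(x,y)^2/t}$ and the extra $e^{-C_2 K(t+d(x,y)^2)}$ factor that absorbs the exponential correction in Corollary \ref{Cor:harnack}.

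For the doubling property \eqref{ineq:doubling} I would test $1=\int p_t(x,y)d\mu(y)$ against both estimates at a well-chosen $t\simeq r^2$: the lower bound restricted to $B(x,r)$ gives $\mu(B(x,r))\gtrsim e^{-CKr^2}\mu(B(x,\sqrt t))$, while comparing on-diagonal estimates at two scales $r$ and $2r$ via the Harnack inequality forces a uniform ratio $\mu(B(x,2r))/\mu(B(x,r))$ up to the factor $e^{C_{d2}Kr^2}$. The main obstacle I expect is bookkeeping of the cross-dependencies: the Gaussian exponents, the prefactors, and the exponential $K$-corrections must all track the sharp constant $D=d(1+\tfrac{3\kappa}{2\rho_2})$ appearing in Corollary \ref{Cor:harnack}, and in the Harnack chain argument the number of chain steps grows with $d(x,y)^2/t$, so one has to carefully absorb the $K(t-s)$ terms to obtain the $K(t+d(x,y)^2)$ correction in \eqref{ineq:hk LB} rather than a worse dependence.
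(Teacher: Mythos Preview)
The paper does not prove this proposition at all: it is quoted from \cite{BBGM} as a preliminary result, with no argument supplied. So there is no proof in the paper to compare against; what you have sketched is essentially the strategy carried out in \cite{BG} and \cite{BBGM}.

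That said, your write-up has a genuine circularity as stated. You invoke Corollaries \ref{Cor:LiYau} and \ref{Cor:harnack} as inputs, but in the present paper those corollaries are \emph{consequences} of Theorem \ref{thm:uniq of positive}, which in turn depends on this very Proposition (via the Poincar\'e inequality, Sobolev inequality, and mean-value estimates of Section 3). What you actually need are the Li--Yau and Harnack inequalities for $P_t f$ with $f\in C_0^\infty(\M)$, proved directly in \cite{BG} without any heat-kernel input; you cannot cite the upgraded versions for arbitrary positive solutions. Relatedly, applying Corollary \ref{Cor:LiYau} straight to $u(x,t)=p_t(x,y)$ is not automatic, since that statement carries the hypothesis $u\in\mathcal A_\epsilon$ (see Remark \ref{rmk:A epsilon}); one has to go through $P_t f$ for compactly supported $f$ and then pass to the limit.

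One further point: the step where you say ``Li--Yau together with $\tfrac{d}{dt}\|P_tf\|_2^2=-2\mathcal E(P_tf,P_tf)$ yields a Nash inequality'' is not how the on-diagonal upper bound is obtained in this framework, and I do not see how to make it work without a Sobolev-type input you do not yet have. In \cite{BG}/\cite{BBGM} the on-diagonal bound $p_t(x,x)\le Ce^{CKt}/\mu(B(x,\sqrt t))$ comes instead from the Harnack inequality combined with stochastic completeness: Harnack compares $p_t(x,y)$ for $y\in B(x,\sqrt t)$ to $p_{ct}(x,x)$, and then one integrates against $\int p_t(x,y)\,d\mu(y)=1$. The rest of your outline (Gaussian off-diagonal factor via an integrated maximum principle, lower bound via a Harnack chain, doubling by comparing on-diagonal values at scales $r$ and $2r$) matches the route taken in \cite{BBGM}.
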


\
Denote $Q=\log_2 C_{d1}$. (\ref{ineq:doubling}) implies that for any $\lambda>1$,
\begin{align}
 \begin{aligned} \label{ineq:doubling2}
 \frac{\mu(B(x,\lambda r))}{\mu(B(x,r))}
   &\leq C_{d1} ^{\lceil \log_2 \lambda \rceil} \exp(C_{d2} K \sum_{i=0}^{\lceil \log_2 \lambda \rceil-1 } (2^i r)^2 ) \\
   &\leq  C_{d1} \lambda^Q \exp(\frac{4 C_{d2} }{3}  K  (\lambda r)^2 ) .
 \end{aligned}
\end{align}

\
This doubling property allows us to estimate $\mu(B(x,\sqrt t))$ by $\mu(B(y,\sqrt t))$:
\begin{align*}
 \mu(B(x,\sqrt t)) &  \leq \mu(B(y,\sqrt t + d(x,y)))  \\
 & \leq \mu(B(y,\sqrt t)) 2C_{d1} \left( 1+ \frac{d(x,y)^2}{t} \right)^{Q/2}  \exp\left(\frac{8C_{d2}}{3} K( t+d(x,y)^2 ) \right) .
\end{align*}

So we modify the upper bound of heat kernel (\ref{ineq:hk UB}) with the volume of a single ball, i.e., for $C_5,C_6>0$ depending on $\rho_2,\kappa,d$,
\begin{align}
\begin{aligned} \label{ineq:hk UB2}
 p_t(x,y) 
 & \leq \frac{C_{5}}{\mu(B(x,\sqrt t)) } \exp\left( C_{6} K(t+d(x,y)^2)-\frac{d(x,y)^2}{6t} \right).
\end{aligned}
\end{align}
Note that $ 1+A \leq C(\epsilon) e^{\epsilon A} $ for $\forall \epsilon>0, A\geq 0$ is applied.

\begin{remark}
As mentioned in \cite{BBGM}, the square in the exponent of volume doubling property might not be optimal.
For instance, in the Riemannian manifold with Ricci tensor bounded below by $-K<0$,
by the Bishop-Gromov comparison theorem we have $V(x,\lambda r)\leq V(x,r) \lambda^n \exp(\sqrt{(n-1)K} (\lambda r) ) $ where $\lambda>1$ and
$V(x,r)$ is the Riemannian measure of the ball $B(x,r)$.

\
This yields the difference of the exponent in (\ref{ineq:Buser}) and (\ref{ineq:Buser subelliptic}).
\end{remark}

\begin{remark} \label{rmk:A epsilon}
(\cite{BB}) Notice that the positive solution $u$ carries additional condition in the Li-Yau type inequality, Corollary \ref{Cor:LiYau}. 
Due to the technical reason in the proof of Theorem 6.1 in \cite{BG}, $u$ needs to be contained in 
$\mathcal A_\epsilon=\{f\in C_b^\infty(\M) : f-\epsilon\geq 0, \sqrt{\Gamma(f-\epsilon)},\sqrt{\Gamma^Z(f-\epsilon)}\in L^2(\M) \}$. 
Same restriction is required for log-Sobolev inequality in \cite{BB}.
\end{remark}

\section{Poincar\'e inequality on the ball}
\subsection{Lower bound of the Dirichlet heat kernel on the ball}

\
Throughout this section, $L$ satisfies $CD(-K,\rho_2,\kappa,d)$, $K>0$ on $\M$.

\
To adapt Kusuoka and Stroock's idea \cite{KS}, the necessary ingredients will be two-sided heat kernel bound (\ref{ineq:hk LB}),(\ref{ineq:hk UB})
and doubling (\ref{ineq:doubling2}).


\
Denote $B=B(x_0,r)$, sub-Riemannian ball centered at $x_0$ with radius $r$. On the ball $B$, the Dirichlet heat kernel $p^{B,D}_t(x,y)$ will be defined by
the transition probability
\[
 p^{B,D}_t(x,y)d\mu(y) = P[ \zeta>t, X(t) \in d\mu(y)] ,
\]
where $X(t)$ is the associated Markov process of the semigroup operator $P_t = e^{tL}$ with $X(0)=x$,
and the lifetime of $X$ in $B$ is $\zeta = \inf\{ t>0, X(t) \not\in B \}$.

\
First, the lower bound of the Dirichlet heat kernel for close $x,y$ can be obtained by the argument of Kusuoka and Stroock \cite{KS}:

\begin{lemma}
For any $k\in (0,1)$, there exists $C_\alpha = C(k,\rho_2,\kappa,d) \in (1,\infty)$ such that for any $x_0 \in \M$, $r>0$ and $\alpha=\sqrt{ \frac{1}{C_\alpha(Kr^2+1)} } \in (0,1)$,
the Dirichlet heat kernel on $B=B(x_0,r)$ has lower bound
\begin{align} \label{eq:dirichlet local bound}
 p^{B,D}_t (x,y) \geq \frac{c}{\mu(B(x,\sqrt t))} \exp\left( -C\frac{d(x,y)^2}{t}  \right)
\end{align}
for all $t\in(0,(\alpha r)^2 ]$ and $x,y \in B(x_0,kr)$ such that $d(x,y) \leq \alpha r$. \\
Here $c,C >0$ depend only on $\rho_2,\kappa,d$.
\end{lemma}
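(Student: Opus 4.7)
The plan is the classical Kusuoka-Stroock reflection argument: write the Dirichlet kernel as the difference of the global heat kernel and an error term coming from paths that have already exited $B$, then show that when $x,y$ are well inside $B$ and $t$ is small relative to the margin $(1-k)r$, the error is absorbed into the main term.

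By the strong Markov property applied at $\zeta$,
\[
 p^{B,D}_t(x,y) \;=\; p_t(x,y) \;-\; E^x\!\bigl[p_{t-\zeta}(X(\zeta), y)\,\mathbf{1}_{\zeta \leq t}\bigr].
\]
For the main term I would invoke the global lower bound (\ref{ineq:hk LB}). Since $t \leq (\alpha r)^2$ and $d(x,y) \leq \alpha r$ give $Kt,\,K d(x,y)^2 \leq 1/C_\alpha$, the $K$-dependent exponentials in (\ref{ineq:hk LB}) collapse to constants and
\[
 p_t(x,y) \;\geq\; \frac{c_1}{\mu(B(x,\sqrt t))}\exp\!\left(-C_1\,\frac{d(x,y)^2}{t}\right),
\]
with $c_1, C_1$ depending only on $\rho_2,\ka,d$.

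For the error term I would use $P^x[\zeta \leq t]\leq 1$ to reduce to the pointwise bound $\sup_{s\in(0,t],\,z\in\partial B} p_s(z,y)$ and then apply the modified upper bound (\ref{ineq:hk UB2}). Since $x,y \in B(x_0,kr)$ and $z\in\partial B$, the triangle inequality gives $d(z,y) \geq (1-k)r$, so the exponent becomes $d(z,y)^2\bigl(C_6 K - \tfrac{1}{6s}\bigr) + C_6 Ks$. Using $\tfrac{1}{6s} \geq C_\alpha(Kr^2+1)/(6r^2)$ and taking $C_\alpha \geq 12 C_6$, this is dominated by $-\tfrac{(1-k)^2 r^2}{12 s}$ plus bounded terms. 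The volume factor $\mu(B(x,\sqrt t))/\mu(B(z,\sqrt s))$ is then controlled, via the inclusion $B(x,\sqrt t) \subset B(z, 2r+\sqrt t)$ together with the doubling estimate (\ref{ineq:doubling2}), by $C_{d1}(1+2r/\sqrt s)^Q \exp(C' K r^2)$, and the combined ratio $(\text{error})/(\text{main})$ is bounded by
\[
 C\,(1+2r/\sqrt s)^Q \exp\!\left(C' Kr^2 \;-\; \tfrac{(1-k)^2 r^2}{12s} \;+\; C_1\,\tfrac{d(x,y)^2}{t}\right).
\]

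Since $s\leq t$ implies $r^2/s \geq r^2/t \geq d(x,y)^2/(\alpha^2 t)$ with $\alpha^2 \leq 1/C_\alpha$, and since $\sup_{a>0}(1+a)^Q e^{-c a^2}$ is finite, choosing $C_\alpha$ sufficiently large (depending on $k$, $Q$, and the structural constants $C_1,C_6,C_{d1},C_{d2},\rho_2,\ka,d$) forces the ratio to be at most $\tfrac12$ uniformly in the admissible $s,t,x,y,r,K$; subtracting then yields (\ref{eq:dirichlet local bound}). The main obstacle is precisely this last bookkeeping step: one must verify that a single $C_\alpha = C_\alpha(k,\rho_2,\ka,d)$ simultaneously dominates (i) the Gaussian competition between $-(1-k)^2 r^2/(12 s)$ and $+C_1 d(x,y)^2/t$, (ii) the polynomial factor in $r/\sqrt s$ from volume comparison, and (iii) the $\exp(C' K r^2)$ coming from doubling, uniformly in $s\in(0,(\alpha r)^2]$ and in $r,K>0$.
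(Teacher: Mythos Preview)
Your approach is correct and essentially identical to the paper's: both use the strong-Markov decomposition $p^{B,D}_t = p_t - \E^x[p_{t-\zeta}(X(\zeta),y),\,\zeta<t]$, bound the main term by (\ref{ineq:hk LB}) and the error by the global Gaussian upper bound plus doubling, and then choose $C_\alpha$ large enough so that the Gaussian factor $\exp(-c(1-k)^2 r^2/s)$ simultaneously kills the polynomial volume-comparison factor, the $e^{C'Kr^2}$ from doubling, and the $e^{C_1 d(x,y)^2/t}$ from the main term. The only cosmetic differences are that the paper uses the two-volume bound (\ref{ineq:hk UB}) rather than (\ref{ineq:hk UB2}) and keeps the expectation (bounding $\E^x[(r/\sqrt{t-\zeta})^Q e^{-c r^2/(t-\zeta)}]$ via $\sup_{a>0} a^Q e^{-ca^2}$) instead of first passing to the supremum over $s$; your bookkeeping in (i)--(iii) goes through exactly as you outline once you split $-\tfrac{(1-k)^2 r^2}{12s}$ into three pieces.
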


\begin{proof}
Let $\alpha=\left( C_\alpha(Kr^2+1) \right)^{-\frac{1}{2}} \in (0,1)$ with some $C_\alpha>1$ which will be determined later.
Note that $ K ( \alpha r )^2 \leq C_\alpha^{-1} \leq 1 $ .

\
Let $d(x,y)\leq \alpha r$ and $t \leq (\alpha r)^2$.
The Dirichlet heat kernel can be written by the heat kernel of $\M$ and the lifetime of the process in the domain. That is,
\begin{align*}
 p^{B,D}_t (x,y) & = p_t(x,y) - \mathbb E ^x[ p_{t-\zeta} ( X(\zeta),y) , \zeta<t ], \quad \zeta = \inf\{ t>0, X(t) \not\in B(x_0,r) \}.
\end{align*}

The lower bound (\ref{ineq:hk LB}) on the heat kernel $p_t(x,y)$ over the whole manifold yields
\begin{align*}
  p_t(x,y) 
   \geq & \frac{C_1 e^{-2 C_2} }{\mu(B(x,\sqrt t))} \exp\left( -\frac{D}{2d}\frac{d(x,y)^2}{t}  \right) .
\end{align*}

If we use upper bound (\ref{ineq:hk UB}) on the heat kernel in the expectation, we have
\[
 p_{t-\zeta}(X(\zeta),y) \leq \frac{C_3 \exp\left( C_4 K (t-\zeta)-\frac{d(X(\zeta),y)^2}{5(t-\zeta)} \right)}
                                            {\mu(B(X(\zeta),\sqrt{t-\zeta}))^{1/2} \mu(B(y,\sqrt{t-\zeta}))^{1/2}} .
\]

The balls can be replaced by concentric balls using the doubling property (\ref{ineq:doubling2}) as follows.
\begin{align*}
\frac{\mu(B(x,\sqrt t))}{\mu(B(X(\zeta),\sqrt{t-\zeta}))} \leq & \frac{\mu(B(X(\zeta),3r))}{\mu(B(X(\zeta),\sqrt{t-\zeta}))}
 \leq  C_{d1}  \left( \frac{3r}{\sqrt{t-\zeta}} \right)^Q \exp( \frac{4C_{d2} K}{3} (3r)^2 )  , \\
\frac{\mu(B(x,\sqrt t))}{\mu(B( y ,\sqrt{t-\zeta}))} \leq & \frac{\mu(B(y,2\alpha r))}{\mu(B(y,\sqrt{t-\zeta}))}
 \leq  C_{d1}  \left( \frac{2\alpha r}{\sqrt{t-\zeta}} \right)^Q \exp( \frac{4C_{d2} K}{3} (2\alpha r)^2 ).
\end{align*}

With $ d(X(\zeta),y ) \geq r(1-k)$ and $t-\zeta \leq t\leq (\alpha r)^2 $, the above controls imply that
\begin{align*}
\mathbb E ^x[ p_{t-\zeta} & ( X(\zeta),y) , \zeta<t ] \\
 \leq &  \frac{C_3 C_{d1}e^{3C_{d2}+C_4} }{\mu(B(x,\sqrt t))} \exp\left(-\frac{r^2(1-k)^2}{10 t}+ 6C_{d2} K r^2\right) \cdot C_\nu \exp\left( -\nu \frac{1}{\alpha^2}\right).
\end{align*}

The last term came from $ \E ^x \left[  \left( \frac{\sqrt{(3r)(2\alpha r)}}{\sqrt{t-\zeta}} \right)^Q  \exp\left(  -\frac{r^2(1-k)^2}{10 (t-\zeta)}  \right) \right]
\leq C_\nu \exp\left( -\nu \frac{1}{\alpha^2}\right)$,
which holds if we choose $C_\nu \geq \left( \frac{60 Q}{e(1-k)^2} \right)^{Q/2}$, $\nu\leq \frac{(1-k)^2}{20}$.\\

\
Combining these upper and lower estimates with $t\leq (\alpha r)^2$, $d(x,y) \leq \alpha r $,
\begin{align*}
 & p^{B,D}_t (x,y) \\
 & \geq  \frac{C_1 e^{-2 C_2}}{\mu(B(x,\sqrt t))} e^{\left( -\frac{D}{2d}\frac{d(x,y)^2}{t} \right)}
  \left[ 1 - C \exp\left(-\frac{r^2}{t}( \frac{(1-k)^2}{10} - \frac{D}{2d}\alpha^2 )+ 6C_{d2} K r^2  - \frac{\nu}{\alpha^2} \right)  \right] ,
\end{align*}
where $C= C_3 C_{d1} C_\nu C_1^{-1} e^{3C_{d2}+C_4+2 C_2}$.

\
Choose $\alpha$ small enough for $ \left[ 1 - C \exp\left(-\frac{r^2}{t}( \frac{(1-k)^2}{10} - \frac{D}{2d}\alpha^2 )+ 6C_{d2} K r^2  - \frac{\nu}{\alpha^2} \right)  \right] \geq \frac{1}{2}$.

\
Then we conclude
\[
 p^{B,D}_t (x,y) \geq \frac{c}{\mu(B(x,\sqrt t))} \exp\left( -C\frac{d(x,y)^2}{t}  \right) ,
\]
for all $t\leq (\alpha r)^2$, $d(x,y)\leq \alpha r$, where $c=2^{-1}C_1 e^{-2C_2} ,C=\frac{D}{2d} > 0$ depend on $\rho_2,\kappa,d$.\\



\
For instance, if we pick large $C_\alpha = C(\rho_2,\kappa,d, k )>0$ such as
\[
 C_\alpha \geq \max\left\{ \frac{\frac{D}{2d}+\ln(2C_3 C_{d1} C_\nu C_1^{-1} e^{3C_{d2}+C_4+2 C_2})+6C_{d2}}{\nu +\frac{(1-k)^2}{10}}, \left(\frac{2d}{D} \frac{(1-k)^2}{10}\right)^{-1} \right\} ,
\]
then our choice of
\begin{align}\label{eq:choice of alpha}
 \alpha^2 = \frac{1}{C_\alpha (Kr^2+1)} ,
\end{align}
satisfies the estimates above.

\end{proof}

\
Next step is the lower bound of Dirichlet heat kernel for any $x,y$ in the smaller ball which is followed by the chain argument.
Note that our lemma holds for any $r>0$ with the exponential square of radius, while the classic lemma holds only for $0<r \leq 1$.

\begin{lemma}
For any $0<k<1$ and $0<\delta<1$, there exists $0<c<1$, $C>0$ such that for any $x_0\in \M$ and $r>0$, the Dirichlet heat kernel on the ball $B=B(x_0,r)$ has lower bound
\begin{align*}
 p_t^{B,D} (x,y) \geq \frac{c \exp(-C Kr^2)}{\mu(B(x_0,kr))}
\end{align*}
for all $x,y\in B(x_0,kr)$ and $\delta r^2 \leq t \leq r^2 $ .
\end{lemma}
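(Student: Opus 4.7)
The plan is to combine the previous lemma with a Chapman--Kolmogorov chaining argument along a minimizing subunit curve from $x$ to $y$. Fix an intermediate radius $k'=(1+k)/2\in(k,1)$ and apply the previous lemma on the ball $B=B(x_0,r)$ with $k'$ in place of $k$; this gives a lower bound for $p_{s}^{B,D}(w,w')$ valid whenever $w,w'\in B(x_0,k'r)$, $d(w,w')\le\alpha r$ and $s\le(\alpha r)^{2}$, with $\alpha^{2}=(C_\alpha(Kr^{2}+1))^{-1}$.

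Given $x,y\in B(x_0,kr)$ and $t\in[\delta r^{2},r^{2}]$, I would choose $n=\lceil C_{0}(Kr^{2}+1)\rceil$ with $C_{0}$ large enough (depending on $k,\delta,\rho_{2},\kappa,d$) so that both $t/n\le(\alpha r)^{2}$ and $d(x,y)/n\le\alpha r/3$ hold; note $d(x,y)\le 2kr$, so this second condition only requires $n\gtrsim\sqrt{Kr^{2}+1}$, which is dominated by the first. Split a minimizing subunit curve from $x$ to $y$ into $n$ equal arcs with endpoints $x=z_{0},z_{1},\ldots,z_{n}=y$, all of which lie in $B(x_0,kr)\subset B(x_0,k'r)$. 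Setting $\rho=\sqrt{t/n}\le\alpha r/3$, iterate the semigroup identity on $B$:
\[
p_t^{B,D}(x,y)\;\ge\;\int_{B(z_1,\rho)}\!\!\!\cdots\!\int_{B(z_{n-1},\rho)}\prod_{i=0}^{n-1}p_{t/n}^{B,D}(w_i,w_{i+1})\,d\mu(w_1)\cdots d\mu(w_{n-1}),
\]
with $w_0=x,w_n=y$. The restriction to $w_i\in B(z_i,\rho)$ forces $d(w_i,w_{i+1})\le d(z_i,z_{i+1})+2\rho\le\alpha r$ and keeps all intermediate points inside $B(x_0,k'r)$, so the previous lemma applies to every factor.

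Next I would estimate each factor by $p_{t/n}^{B,D}(w_i,w_{i+1})\ge\frac{c}{\mu(B(w_i,\sqrt{t/n}))}\exp\!\bigl(-C\,d(w_i,w_{i+1})^{2}/(t/n)\bigr)$. The per-step exponent is bounded by $C(\alpha r)^{2}/(t/n)\le Cn\alpha^{2}/\delta\le C/\delta$, so each of the $n$ factors contributes a bounded exponential constant. For the volume part, use the doubling estimate (\ref{ineq:doubling2}) at scale $\rho\asymp\sqrt{t/n}\asymp\alpha r$ to get $\mu(B(z_i,\rho))/\mu(B(w_i,\sqrt{t/n}))\ge c'>0$ uniformly, so each of the $n-1$ integrations contributes a bounded positive constant. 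Collecting,
\[
p_t^{B,D}(x,y)\;\ge\;\frac{c_{1}^{n}\exp(-C_{2}n)}{\mu(B(x,\sqrt{t/n}))}.
\]

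To finish, I would use (\ref{ineq:doubling2}) once more to pass from $\mu(B(x,\sqrt{t/n}))$ to $\mu(B(x_0,kr))$: since $\sqrt{t/n}\ge c\sqrt{\delta/(Kr^{2}+1)}\,r$, doubling gives $\mu(B(x_0,kr))\le C(1+Kr^{2})^{Q/2}\exp(CKr^{2})\,\mu(B(x,\sqrt{t/n}))$. Absorbing the polynomial prefactor into an $\exp(CKr^{2})$ via $1+A\le C_\varepsilon e^{\varepsilon A}$, and using $n\le C_{0}(Kr^{2}+1)$ to turn $c_{1}^{n}\exp(-C_{2}n)$ into $\exp(-C\,Kr^{2})$, one arrives at $p_t^{B,D}(x,y)\ge c\exp(-CKr^{2})/\mu(B(x_0,kr))$. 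The main obstacle is calibrating $n$, the chain spacing, and the intermediate ball radius $\rho$ so that all three scales ($\sqrt{t/n}$, $d(z_i,z_{i+1})$, and $\rho$) are of the common order $\alpha r$: only then are the per-step local-bound exponent and the per-step doubling constant uniformly bounded, which is precisely what prevents the accumulation from blowing up to $\exp(CK^{2}r^{4})$ and keeps it at the announced $\exp(CKr^{2})$.
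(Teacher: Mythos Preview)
Your overall strategy is exactly the chaining argument the paper uses, and your bookkeeping of the scales $n,\rho,\sqrt{t/n}$ and the final doubling step are essentially correct. But there is one genuine gap: you assert that the points $z_0,\dots,z_n$ on a minimizing subunit curve from $x$ to $y$ ``all lie in $B(x_0,kr)\subset B(x_0,k'r)$.'' Sub-Riemannian balls are not geodesically convex in general, so this is unjustified. For $p$ on such a curve one only gets $d(x_0,p)\le d(x_0,x)+\min\{d(x,p),d(y,p)\}< kr+kr=2kr$, and for $k\ge 1/2$ the chain can exit $B(x_0,r)$ altogether, so the previous lemma is not even applicable to the factors $p^{B,D}_{t/n}(w_i,w_{i+1})$. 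Even for $k<1/2$ the choice $k'=(1+k)/2$ is insufficient unless $2k\le k'$, i.e.\ $k\le 1/3$.

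The paper repairs exactly this point by routing the chain through the center: it takes $2n$ steps with $\xi_0=x$, $\xi_n=x_0$, $\xi_{2n}=y$, obtained from minimizing curves from $x$ to $x_0$ and from $x_0$ to $y$. Since any point $p$ on a minimizing curve from $x$ to $x_0$ satisfies $d(p,x_0)\le d(x,x_0)<kr$, these chains genuinely stay in $B(x_0,kr)$, and the previous lemma (with the original parameter $k$) applies to every link. If you replace your direct $x\to y$ chain by this $x\to x_0\to y$ chain, the remainder of your argument (per-step exponent bounded by $C/\delta$, per-step doubling cost bounded since $K\rho^2\lesssim 1$, and the final comparison $\mu(B(x,\sqrt{t/n}))\to\mu(B(x_0,kr))$) goes through and matches the paper's proof.
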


\begin{proof}
Choosing $\alpha \in (0,1) $ of (\ref{eq:choice of alpha}) in the previous lemma, for all $t\leq (\alpha r)^2$, $x,y\in B(x_0,kr)$, $d(x,y)\leq \alpha r$,
\begin{align*}
 p^{B,D}_t (x,y) \geq & \frac{c}{\mu(B(x,\sqrt t))} \exp( -C\frac{d(x,y)^2}{t} ).
\end{align*}

\
Now let $x,y$ be any points in $ B(x_0,kr) $ and $\delta r^2 \leq t \leq r^2$. Set $ n = \lceil 16\alpha^{-2} \rceil $, then $16 \alpha^{-2} \leq n \leq 17 \alpha^{-2}$.

\
We choose $\{\xi_i\}_{i=0,1,\cdots,2n} \subset B(x_0,kr)$ such that
\begin{align*}
 &\xi_0=x, \quad \xi_n = x_0,\quad \xi_{2n} = y, \\
 &d(\xi_k,\xi_{k+1}) \leq \frac{r}{n} \leq \frac{\alpha r}{4}.
\end{align*}

\
Let $\tau = \frac{t}{2n} $. Since $\sqrt{\tau} \leq \frac{\alpha r}{4}$, if $\eta_k \in B(\xi_k , \sqrt{\tau})$, then $d(\eta_k,\eta_{k+1})\leq \alpha r$.\\

\
By the previous lemma,
\begin{align*}
p^{B,D}_\tau (\eta_k,\eta_{k+1}) \geq & \frac{c}{\mu(B(\eta_k,\sqrt \tau ))} \exp( -C\frac{d(\eta_k,\eta_{k+1})^2}{\tau}  ) \\
\geq & \frac{c C_{d1}^{-1} \exp (-C_{d2} K (\alpha r)^2)}{\mu(B(\xi_k,\sqrt \tau ))} \exp( -C\frac{d(\eta_k,\eta_{k+1})^2}{\tau}  ) .
\end{align*}

\
And we see that
\begin{align*}
 \frac{d(\eta_k,\eta_{k+1})^2}{\tau} \leq \left(\frac{d(\xi_k,\xi_{k+1})}{\sqrt{\tau}}+2 \right)^2 \leq
  \left(\frac{r\sqrt{2}}{\sqrt{nt} }  +2 \right)^2 \leq \frac{4}{\delta n} + 8 .
\end{align*}

Observing
\begin{align*}
 p^{B,D}_t(x,y) \geq \int_{B(\xi_{2n-1},\sqrt{\tau})} & \cdots \int_{B(\xi_1,\sqrt{\tau})} p^{B,D}_\tau(x,\eta_1) \\
    & \cdot p^{B,D}_\tau(\eta_1,\eta_2) \cdots p^{B,D}_\tau(\eta_{2n-1},y) d\eta_1 \cdots d\eta_{2n-1},
\end{align*}
we obtain
\begin{align*}
 p^{B,D}_t(x,y) \geq \frac{1}{\mu(B(x_0,\sqrt{\tau}))} \left(c C_{d1}^{-1} \exp(-C(\frac{4}{\delta n} + 8)-C_{d2}K(\alpha r)^2 ) \right)^{2n} .
\end{align*}

\
Doubling property (\ref{ineq:doubling2}) yields
\begin{align*}
 \frac{1}{\mu(B(x_0,\sqrt{\tau}))} \geq \frac{1}{\mu(B(x_0,r ))} \geq \frac{C_{d1}^{-1} k^Q \exp(-\frac{4C_{d2}}{3} Kr^2) }{\mu(B(x_0,kr))}  .
\end{align*}

\
Also since $c C_{d1}^{-1} \exp(-8C) <1$ and $n\leq 17 \alpha^{-2} = 17 C_\alpha (K r^2 +1 )$ from (\ref{eq:choice of alpha}),
\begin{align*}
 &\left(c C_{d1}^{-1} \exp\left(-C(\frac{4}{\delta n} + 8)-C_{d2}K(\alpha r)^2 \right) \right)^{2n} \\
 &\geq \exp\left(-\frac{8C}{\delta}  - (8C-\ln(c C_{d1}^{-1}) +C_{d2})\cdot 34 C_\alpha (Kr^2 +1) \right) .
\end{align*}
This concludes our lemma
\[
 p_t^{B,D} (x,y) \geq \frac{c' \exp(-C' Kr^2)}{\mu(B(x_0,kr))},
\]
where
\begin{align*}
&c' =C_{d1}^{-1} k^Q \exp\left(-\frac{8C}{\delta}  - (8C-\ln(c C_{d1}^{-1}) +C_{d2})(34 C_\alpha)  \right)  , \\
&C' = \frac{4C_{d2}}{3} + (8C-\ln(c C_{d1}^{-1}) +C_{d2}) (34 C_\alpha )
\end{align*}
are determined by $\rho_2,\kappa,d,k,\delta$.
\end{proof}

\subsection{proof of Theorem \ref{thm:buser subelliptic}}

\
In this section, we utilize Dirichlet, Neumann heat semigroup which can be found in \cite{St2},\cite{SC},\cite{KS},\cite{GySa}, then we will follow the arguments in \cite{KS}
to prove Poincar\'e inequality (\ref{ineq:Buser subelliptic}).

\
Let $B=B(x_0,r)$. Define a subspace $D^\infty \subset C^\infty(B)$ as a collection
of functions $f$ satisfying $ -\int_B g Lf d\mu = \int_B \Gamma(g,f)d\mu $ for $\forall g \in C^\infty(B)$. Note that $C_0^\infty(B) \subset D^\infty\subset C^\infty(B)$.\\
The Dirichlet form $\mathcal E (f,g) = \int_B \Gamma(f,g)d\mu $ on $D^\infty$ is closable in $L^2(B)$, and by closing it we gain a Dirichlet form and associated
Markov heat semigroup $P_t^{B,N}$ with Neumann boundary condition.

\
If we denote $p_t^{B,N}$ by the Neumann heat kernel over $B$, it will be a smooth kernel of the Neumann heat semigroup and its associated transition probability function.
Naturally, since $C_0^\infty(B) \subset D^\infty$, the Neumann heat kernel dominates the Dirichlet heat kernel, i.e., $p_t^{B,N} \geq p_t^{B,D}$.


\begin{proof}[Proof of Theorem \ref{thm:buser subelliptic} .]
We will prove the inequality with $B(x_0,r/2)$ on the left hand side. Then by the Whitney type covering lemma (section 5 in \cite{Je}),
we can match the balls on the both sides. The Whitney decomposition only requires a doubling property in the domain of argument.
In $B(x_0,10 r)$, the doubling property holds with fixed constant $C_{d1} \exp(C_{d2}K(10 r)^2)$, which will be multiplied at the end following the argument.

\
From the previous lemma, for $x,y\in B(x_0,r/2)$,
\begin{align*}
p_{r^2}^{B(x_0,r),N}(x,y) \geq \frac{c e^{-C K r^2} }{\mu(B(x_0,r/2))}  .
\end{align*}
For any $f\in C^\infty(B)$ and $x\in B(x_0,r/2)$,
\begin{align*}
P_{r^2}^{B(x_0,r),N}(f - & P_{r^2}^{B(x_0,r),N} f (x) )^2 (x) \\
& \geq \frac{c e^{-C K r^2} }{\mu(B(x_0,r/2))} \int_{B(x_0,r/2)} (f(y)-  P_{r^2}^{B(x_0,r),N} f (x) )^2 d\mu(y) \\
 &\geq \frac{c e^{-C K r^2} }{\mu(B(x_0,r/2))} \int_{B(x_0,r/2)} (f(y)-  f_{B(x_0,r/2)} )^2 d\mu(y).
\end{align*}
On the other hand,
\begin{align*}
\int_{B(x_0,r)} P_{r^2}^{B(x_0,r),N}(f - & P_{r^2}^{B(x_0,r),N} f (x) )^2 (x) d\mu(x) \leq \int_{B(x_0,r)} (f^2 - P_{r^2}^{B(x_0,r),N} f (x) ^2 )d\mu(x) \\
 &=  \int_0^{r^2} \int_{B(x_0,r)}  -\frac{d}{dt} ( P_t^{B(x_0,r),N} f (x)) ^2 d\mu(x) dt  \\
 &= \int_0^{r^2} \int_{B(x_0,r)} -2 P_t^{B(x_0,r),N} f (x) L P_t^{B(x_0,r),N} f (x)  d\mu(x) dt \\
 &= \int_0^{r^2} \int_{B(x_0,r)} 2 \Gamma(  P_t^{B(x_0,r),N} f (x) )  d\mu(x) dt \\
 &\leq 2 r^2 \int_{B(x_0,r)} \Gamma( f )  d\mu ,
\end{align*}
where the last inequality comes from $\frac{d}{dt} \Gamma(P_t f) \leq 0$. And we obtain our desired conclusion
\begin{align*}
 \int_{B(x_0,r/2)} (f(x)-f_{B(x_0,r/2)} )^2 d\mu(x) \leq C'_{p1} r^2 e^{C'_{p2} Kr^2} \int_{B(x_0,r)} \Gamma(f) d\mu ,
\end{align*}
with $C'_{p1}=2/c, C'_{p2}=C$.
\end{proof}

\subsection{Sobolev inequality and $L^p$ mean value estimate }

\
As a consequence of Theorem \ref{thm:buser subelliptic}, this section is dedicated to the Sobolev inequality and $L^p$ mean value inequality for subharmonic functions,
which will be essential to prove Theorem \ref{thm:uniq of positive} and \ref{thm:uniq of Lp}. Throughout this paper, harmonic (resp.subharmonic) functions are $f\in \Dom(L)$ satisfying $Lf=0$ (resp.$Lf\geq 0$).


\
Assume that $L$ satisfies $CD(-K,\rho_2, \kappa,d)$, $K>0$ on $\M$.
We have Poincar\'e inequality (\ref{ineq:Buser subelliptic}) and exponential doubling property (\ref{ineq:doubling}).


\
With these two ingredients, one can derive local Sobolev inequality in \cite{SC2},\cite{SC3}. This is a classic path to Moser's iteration for Harnack's inequality.
See theorem 2.2 in \cite{SC2} and section.10 in \cite{SC3} (also the last section of \cite{Va}).

\
Note that in \cite{GN2}, one can find that the weak $L^1$ Poincar\'e inequality with the doubling property derives the isoperimetric inequality
(and Sobolev inequalities) in Carnot-Carath\'eodory spaces.

\begin{proposition}[\cite{SC2},\cite{SC3}, Sobolev inequality on balls]
If the Poincar\'e inequality (\ref{ineq:Buser subelliptic}) and the volume doubling condition (\ref{ineq:doubling}) are satisfied for any $r>0$,
then for any $x\in \M$, $0<r$, $ f \in C_0^\infty (B(x,r))$, denoting $B=B(x,r)$,
\begin{align} \label{local sobolev inequality}
 \left( \frac{1}{\mu(B)} \int_{B} |f|^{\frac{2Q}{Q-2}} d\mu \right)^{\frac{Q-2}{2Q}} \leq C_1 r e^{C_e K r^2} \left( \frac{1}{\mu(B)}
 \int_{B} (\Gamma(f)+r^{-2} |f|^2 )d\mu \right)^{\frac{1}{2}},
\end{align}
where $Q=\log_2 C_{d1}$ in (\ref{ineq:doubling}), $C_1, C_e >0$ depend only on $\rho_2,\kappa,d$.
\end{proposition}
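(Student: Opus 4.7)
The plan is to derive the inequality from Saloff-Coste's abstract scheme (\cite{SC2}, \cite{SC3}), which converts a family of $L^2$ Poincar\'e inequalities on balls, together with a volume doubling condition, into a local Sobolev inequality of precisely the form stated, with explicit dependence on the Poincar\'e constant, the doubling exponent, and the radius. Both inputs are already in hand: Theorem \ref{thm:buser subelliptic} provides the Poincar\'e inequality on every ball with constant $C r^2 e^{C_{p2} K r^2}$, while (\ref{ineq:doubling2}) provides doubling with effective homogeneous dimension $Q = \log_2 C_{d1}$ modulo a prefactor $e^{O(K r^2)}$. The only substantive task is to propagate the exponential-in-$Kr^2$ factors through the scheme.

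First I would fix $B = B(x,r)$ and observe that on every sub-ball $B' = B(y,s) \subset B(x,10r)$ with $s \le 2r$, both inputs hold with structural constants depending only on $\rho_2, \kappa, d$ and an overall $K$-dependent inflation controlled by the single factor $e^{O(K r^2)}$, since $K s^2 \le 4 K r^2$ throughout. For $f \in C_0^\infty(B)$, I would then pass from the mean-oscillation Poincar\'e inequality to a pseudo-Poincar\'e inequality by applying Theorem \ref{thm:buser subelliptic} on a slightly enlarged ball on which $f$ is still supported in $B$ and whose mean is controlled by the volume ratio $\mu(B)/\mu(B(x,2r))$ via Cauchy-Schwarz. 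From this starting point, one runs Saloff-Coste's truncation-and-iteration argument: decompose $f$ according to its dyadic level sets via the truncations $f_k = (|f|-2^k)_+ \wedge 2^k$, apply the pseudo-Poincar\'e inequality to each piece on an adapted covering ball, and sum the resulting level-set estimates using (\ref{ineq:doubling2}). The Gagliardo-Nirenberg interpolation emerging from this summation produces the Sobolev exponent $2Q/(Q-2)$, and the $r^{-2}|f|^2$ term on the right of (\ref{local sobolev inequality}) is precisely the lower-order remainder absorbing the mean in the pseudo-Poincar\'e step.

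The main obstacle is the careful bookkeeping of exponential factors through the iteration. Each application of Poincar\'e contributes a factor $e^{C_{p2} K r^2}$ and each volume comparison contributes $e^{C_{d2} K r^2}$, so one must verify that only finitely many such factors accumulate and that the final Sobolev constant can be collapsed into the form $C_1 r e^{C_e K r^2}$ with $C_e$ depending only on $\rho_2, \kappa, d$. Since every sub-ball employed in the covering has radius at most $2r$, the relevant exponent $K s^2$ is uniformly bounded by $4 K r^2$, so a single prefactor $e^{C_e K r^2}$ absorbs all of them; the dyadic geometric series in the truncation argument converges as in the standard case and contributes only a multiplicative constant depending on $Q$. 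Apart from this tracking, the remainder of the argument is the standard Saloff-Coste/Grigor'yan derivation, whose output is precisely (\ref{local sobolev inequality}).
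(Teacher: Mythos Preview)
Your approach is correct and follows the route the paper attributes to \cite{SC2},\cite{SC3}: use Poincar\'e on sub-balls of $B(x,r)$ together with doubling, derive a pseudo-Poincar\'e inequality for compactly supported $f$, and run the level-set truncation to reach the Sobolev exponent $2Q/(Q-2)$. Your observation that every sub-ball used has radius bounded by a fixed multiple of $r$, so that all the $e^{O(Ks^2)}$ factors collapse into a single $e^{C_e K r^2}$, is exactly the bookkeeping needed, and it goes through.

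The paper, however, does \emph{not} carry out this argument. After remarking that the inequality ``can also be obtained by following steps of \cite{SC3}'', it instead bypasses the Poincar\'e inequality entirely and appeals to the Varopoulos ultracontractivity criterion: it bounds the Dirichlet heat kernel on $B$ by $p_t^{B,D}(x,y)\le p_t(x,y)$ and uses the global heat-kernel upper bound (\ref{ineq:hk UB2}) together with (\ref{ineq:doubling2}) to obtain $\|P_t^{B,D}\|_{1\to\infty}\le C\,\mu(B)^{-1} r^{Q} t^{-Q/2} e^{cKr^2}$ for $0<t\le r^2$, and then invokes Proposition~10.1 of \cite{SC3} (equivalently \cite{Va}) to conclude (\ref{local sobolev inequality}) directly. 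Your route is more faithful to the hypotheses as stated in the proposition (only Poincar\'e and doubling), while the paper's route is shorter because the heat-kernel bound from \cite{BBGM} is already available and converts to Sobolev in one step; on the other hand, the paper's proof tacitly uses more than the proposition's stated inputs.
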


\
Note that this Sobolev inequality can also be obtained by following steps of \cite{SC3}.\\

\
With $CD(-K,\rho_2,\kappa,d)$ assumed, by the upper bound of the heat kernel (\ref{ineq:hk UB2})
\begin{align*}
&p_t^{B,D}(x,y)\leq p_t(x,y) \leq \frac{C_{5}}{\mu(B(x,\sqrt t)) } \exp\left( C_{6} K(t+d(x,y)^2)-\frac{d(x,y)^2}{6t} \right),
\end{align*}
where $B=B(x_0,r)$. Since $0<t\leq r^2$ and $d(x,y)\leq 2r$ for $x,y\in B$, by (\ref{ineq:doubling2})
\begin{align*}
 \mu(B(x_0,r )) \leq \mu(B(x,2r)) \leq C \left( \frac{r }{\sqrt t }\right)^Q \exp(c Kr^2 )    \mu(B(x,\sqrt t  )).
\end{align*}
Therefore, the Dirichlet heat kernel will be bounded from above by
\begin{align*}
 &p_t^{B,D}(x,y) \leq \frac{C}{\mu(B(x,\sqrt t)) } e^{ c Kr^2 } \leq \frac{C'}{\mu(B(x_0,r)) }  r^{Q} t^{-Q/2} e^{ c' Kr^2 } .
\end{align*}

\
Proposition 10.1 in \cite{SC3} (also \cite{Va}) states that
\begin{align*}
 & \| P_t^{B,D}  \|_{1\rightarrow \infty} \leq C_0 t^{-Q/2}, \quad \forall 0< t< t_0\\
 & \quad \Longrightarrow \|f \|_{\frac{2Q}{Q-2}} \leq C_0^{1/Q} \left( C \| \sqrt{ \Gamma(f)}\|_2 + t_0^{-1/2} \|f \|_2 \right) ,\quad \forall f \in C_0^\infty(B).
\end{align*}
Taking $C_0 = \frac{C'}{\mu(B(x_0,r)) }  r^{Q} e^{ c' Kr^2 } $ and $t_0 = r^2 $ with $A+B\leq (2A^2 +2B^2 )^{1/2} $,
the Sobolev inequality (\ref{local sobolev inequality}) is proved.\\

\
Once the local Sobolev embedding is acquired, our goal of this section, $L^p$ mean value estimate, can be obtained through
the Moser's iteration. One can find the arguments for the Riemannian case in \cite{SC}.

\
In \cite{SC3},\cite{SC2}, the author obtained parabolic $L^p$ mean value estimate. But in our context, $L^p$ mean value estimate for subsolution of $L$-Laplace equation
will be enough.
\begin{lemma} [One step of Moser's iteration] \label{lem:moser_1step}
We assume that $\M$ satisfies $CD(-K,\rho_2,\kappa,d)$, $K>0$.
For any subharmonic function $u(x)\geq0$, i.e. $Lu(x)\geq 0$, and $0< R_1 < R_2 \leq R$, $p\geq2$,
\begin{align} \label{eq:moser 1step}
 \int_{ B(R_1)} u^{p\theta} d\mu \leq C_2 e^{2C_e K R^2} \frac{R^2} { (R_2-R_1)^{2}} V^{1-\theta} \left(  \int_{ B(R_2)}u^p d\mu \right)^\theta ,
\end{align}
where $\theta=1+\frac{2}{Q}$, $B(\cdot)=B(x_0,\cdot)$, $V=\mu(B(R))$.
\end{lemma}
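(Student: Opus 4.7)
The plan is to carry out a single step of Moser's iteration in the standard way, combining three ingredients: a Caccioppoli (reverse Poincar\'e) estimate that exploits the subharmonicity of $u$, the local Sobolev inequality (\ref{local sobolev inequality}) on $B(R)$, and a H\"older interpolation to reach the target exponent $\theta=1+2/Q$ from the Sobolev exponent $\chi:=Q/(Q-2)$. The preliminary step is to fix a cutoff $\eta$ with $\eta\equiv 1$ on $B(R_1)$, $\supp \eta\subset B(R_2)$, $0\le \eta\le 1$, and $\Gamma(\eta)\le C/(R_2-R_1)^2$; I would build it as $\eta(x)=\phi(d(x,x_0))$ for a smooth $\phi:[0,\infty)\to[0,1]$ with $|\phi'|\le 2/(R_2-R_1)$, using $\Gamma(d(\cdot,x_0))\le 1$ almost everywhere together with the usual smoothing to bring $\eta$ into the domain of the Dirichlet form.

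For the Caccioppoli step, I would test $Lu\ge 0$ against $\eta^2 u^{p-1}$ and integrate by parts,
\[
0\le \int \eta^2 u^{p-1}\, Lu\, d\mu = -\int \Gamma(\eta^2 u^{p-1},u)\, d\mu,
\]
expand via the Leibniz rule, and absorb the cross term by Young's inequality to obtain
\[
\int \eta^2\, \Gamma(u^{p/2})\, d\mu \le \frac{C_p}{(R_2-R_1)^2}\int_{B(R_2)}u^p\, d\mu,
\]
with $C_p$ bounded uniformly for $p\ge 2$. I would then apply (\ref{local sobolev inequality}) on $B(R)$ to $v=\eta u^{p/2}$, bound $\Gamma(v)\le 2\eta^2\Gamma(u^{p/2})+2u^p\Gamma(\eta)$, and absorb the zero-order term using $R^{-2}\le (R_2-R_1)^{-2}$. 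The Caccioppoli bound then collapses everything into a single $L^p$ integral, yielding
\[
\left(\int_{B(R_1)} u^{p\chi}\, d\mu\right)^{1/\chi} \le C\, R^2\, e^{2C_e K R^2}\, V^{1/\chi-1}\, \frac{1}{(R_2-R_1)^2}\int_{B(R_2)}u^p\, d\mu.
\]

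To pass from $\chi$ to $\theta$, I would interpolate: writing $u^{p\theta}=u^p\cdot u^{2p/Q}$ and applying H\"older with the conjugate pair $(\chi,Q/2)$ (since $1/\chi+2/Q=1$) gives
\[
\int_{B(R_1)}u^{p\theta}\, d\mu \le \left(\int_{B(R_1)}u^{p\chi}\, d\mu\right)^{1/\chi}\left(\int_{B(R_1)}u^p\, d\mu\right)^{2/Q}.
\]
Substituting the previous inequality and using the identity $1/\chi-1=-2/Q=1-\theta$ produces exactly (\ref{eq:moser 1step}).

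The main technical obstacle is the rigorous justification of the Caccioppoli integration-by-parts in the subelliptic setting, where the distance function (and hence $\eta$) is only Lipschitz and where $u^{p-1}$ fails to be differentiable on the zero set of $u$. Both difficulties are standard: the first is handled by smooth approximation of $\eta$, combined with hypoellipticity of $L$ (which ensures $u\in C^\infty$); the second by replacing $u$ with $u+\delta$, deriving the estimate uniformly in $\delta>0$, and letting $\delta\downarrow 0$. The rest of the argument is algebraic manipulation of the Sobolev/H\"older estimates and bookkeeping of the radii $R_1,R_2,R$, with the exponential factor $e^{2C_e KR^2}$ entering solely through the Sobolev inequality.
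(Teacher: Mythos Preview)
Your proposal is correct and follows essentially the same route as the paper: Caccioppoli estimate from testing $Lu\ge 0$ against $\eta^2 u^{p-1}$, the local Sobolev inequality applied to $\eta u^{p/2}$, and H\"older interpolation to reach the exponent $\theta$. The only cosmetic difference is ordering: the paper applies H\"older first to $|\psi u^{p/2}|^{2\theta}$ and then Sobolev to the resulting $L^{2\chi}$ factor, whereas you first extract the full $L^{p\chi}$ bound and interpolate afterward; the algebra is identical. Your technical caveats (Lipschitz cutoff, regularization $u\to u+\delta$) are appropriate, and the paper's Remark~\ref{remark:cutoff} supplies the cutoff you need.
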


\begin{remark} \label{remark:cutoff}
For any $0<R_1<R_2<\infty$, there exists a Lipschitz continuous cut-off function $\psi \geq 0$ satisfying $\psi |_{B(R_1)} = 1$, $\supp \psi \subset B(R_2)$,
$\sqrt{\Gamma(\psi)} \leq \frac{C}{R_2-R_1}$ almost everywhere for some $C>0$ which is independent to $R_1, R_2$.
See theorem 1.5 in \cite{GN}, lemma 3.6 in \cite{CGL} and \cite{St1}.
\end{remark}


\begin{proof}

Denote $B=B(R)$. Let $\psi \geq 0$ be a cut-off function satisfying $\psi |_{B(R_1)} = 1, \supp \psi \subset B(R_2)$,
$\sqrt{\Gamma(\psi)} \leq \frac{C}{R_2-R_1}$ almost everywhere.
 Choose a test function $\phi = \psi^2 u^{p-1} \geq 0 $, and the subharmonicity condition of $u$ implies
\[
 0\leq (Lu,\phi) = \int_B -\Gamma(u,\psi^2 u^{p-1}) d\mu = \int_B \left( -(p-1) \psi^2 u^{p-2} \Gamma(u) -2\psi u^{p-1} \Gamma(u,\psi) \right) d\mu.
\]
Hence, by Cauchy-Schwarz inequality
\begin{align*}
 \frac{p-1}{2}\int_B \psi^2 u^{p-2}\Gamma(u) d\mu &\leq \int_B \psi u^{p-1} \sqrt{\Gamma(u) \Gamma(\psi)} d\mu \\
  \leq & \left(\int_B \psi^2 u^{p-2} \Gamma(u) d\mu \right)^{1/2} \left(\int_B u^p \Gamma(\psi) d\mu \right)^{1/2} .
\end{align*}
So, we have
\begin{align} \label{eq:pf_moser_1}
 \frac{(p-1)^2}{4}\int_B \psi^2 u^{p-2}\Gamma(u) d\mu \leq \int_B u^p \Gamma(\psi) d\mu .
\end{align}

On the other hand, if we apply H\"{o}lder inequality and the local Sobolev inequality (\ref{local sobolev inequality}) on $\psi u^{p/2}$, it will give
\begin{align*}
 &\frac{1}{\mu(B)} \int_B  |\psi u^{p/2}|^{ 2(1+\frac{2}{Q})} d\mu
   \leq \left( \frac{1}{\mu(B)} \int_B |\psi u^{p/2}|^{\frac{2Q}{Q-2}}  d\mu  \right)^{\frac{Q-2}{Q}}
         \left( \frac{1}{\mu(B)} \int_B |\psi u^{p/2}|^{2} d\mu \right)^{\frac{2}{Q}} \\
  &\leq  C_1^2 R^2 e^{2C_e K R^2}\left( \frac{1}{\mu(B)} \int_B (\Gamma(\psi u^{p/2} )+R^{-2}|\psi u^{p/2}|^2 ) d\mu \right)
                 \left( \frac{1}{\mu(B)}\int_B \psi^2 u^p d\mu \right)^{\frac{2}{Q}} \\
  &\leq  C_1^2 R^2 e^{2C_e K R^2}\left( \frac{1}{\mu(B)} \int_B \Gamma(\psi u^{p/2} ) d\mu+R^{-2}\frac{1}{\mu(B)} \int_{\supp(\psi)} u^p d\mu  \right) \\
 &\quad\quad \cdot  \|  \psi\|_\infty^{\frac{4}{Q}} \left(\frac{1}{\mu(B)} \int_{\supp(\psi)} u^p d\mu \right)^{\frac{2}{Q}} .
\end{align*}

Using (\ref{eq:pf_moser_1}), the gradient term can be written by
\begin{align*}
  & \int_B  \Gamma(\psi u^{p/2} ) d\mu  \leq \int_B 2\left( u^p \Gamma(\psi) +\frac{p^2}{4}\psi^2 u^{p-2}\Gamma(u) \right) d\mu   \\
  &\leq \left( 2+ \frac{2p^2}{(p-1)^2}\right) \int_B u^p \Gamma(\psi) d\mu
    \leq \left( 2+ \frac{2p^2}{(p-1)^2}\right) \|\Gamma(\psi) \|_\infty \int_{\supp \psi} u^p  d\mu.
\end{align*}

Therefore, given $\supp \psi \subset B(R_2)$, $\psi|_{B(R_1)}=1$, $0\leq \psi \leq1$ and $\| \Gamma(\psi) \|_\infty \leq \frac{C^2}{(R_2-R_1)^2}$, we obtain
\begin{align*}
 &\frac{1}{\mu(B)} \int_{B(R_1)} u^{p \theta} d\mu \\
 &\leq  C_1^2 e^{2C_e K R^2}\left(\left( 2+ \frac{2p^2}{(p-1)^2}\right) \frac{C^2 R^2}{ (R_2-R_1)^{2}} +1 \right)\left(\frac{1}{\mu(B)} \int_{B(R_2)} u^p d\mu \right)^{\theta} \\ &\leq  11 C^2 C_1^2  e^{2C_e K R^2}\frac{ R^2 } { (R_2-R_1)^{2}} \left(\frac{1}{\mu(B)} \int_{B(R_2)} u^p d\mu \right)^{\theta} ,
\end{align*}
where $\theta = 1+\frac{2}{Q}$. Note that we can assume that $C>1$ without loss of generality.\\
The desired inequality (\ref{eq:moser 1step}) is proved with $C_2 = 11 C^2 C_1^2$.
\end{proof}

\
Now by iterating the above lemma, we prove $L^p$ mean value estimate.

\begin{theorem} [$L^p$ mean value inequality, $p\geq 2$] \label{thm:Lp mean value p>2}
For any $0<\delta<1$, any $p\geq 2$, and any non-negative subsolution $u$ of $Lu=0$ in a ball $B(R)$ of volume $V$,
\begin{align} \label{ineq:Lp mean value p>2}
 \sup_{\delta B} \{ u^p\} \leq C_3 e^{Q C_e K R^2} (1-\delta)^{-Q} \left( V^{-1} \int_B u^p d\mu \right) .
\end{align}
\end{theorem}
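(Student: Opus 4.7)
The plan is a standard Moser iteration driven by Lemma \ref{lem:moser_1step}. Fix $p \geq 2$, $\delta \in (0,1)$, set $\theta = 1 + \tfrac{2}{Q}$, and for $i = 0, 1, 2, \ldots$ define
\[
R_i = \delta R + (1 - \delta) R \, 2^{-i}, \qquad p_i = p \, \theta^i,
\]
so that $R_0 = R$, $R_i \searrow \delta R$, $R_i - R_{i+1} = (1 - \delta) R \, 2^{-i-1}$, and every $p_i \geq 2$. Applying Lemma \ref{lem:moser_1step} with $(R_1, R_2) = (R_{i+1}, R_i)$ and exponent $p_i$, and using $p_i \theta = p_{i+1}$ together with $R^2/(R_i - R_{i+1})^2 = 4^{i+1}(1-\delta)^{-2}$, one obtains
\[
\int_{B(R_{i+1})} u^{p_{i+1}} \, d\mu \leq C_2 \, e^{2 C_e K R^2} \, \frac{4^{i+1}}{(1-\delta)^2} \, V^{1-\theta} \left( \int_{B(R_i)} u^{p_i} \, d\mu \right)^{\!\theta}.
\]

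Dividing by $V$ absorbs the factor $V^{1-\theta}$ into the $\theta$-th power on the right, so introducing the normalized averages $I_i := (V^{-1} \int_{B(R_i)} u^{p_i} \, d\mu)^{1/p_i}$ turns the display into the clean recursion
\[
I_{i+1} \leq \gamma_i^{1/p_{i+1}} \, I_i, \qquad \gamma_i = C_2 \, e^{2 C_e K R^2} \cdot \frac{4^{i+1}}{(1-\delta)^2}.
\]
Telescoping and summing logarithms, the crux is the evaluation of the two convergent series
\[
\sum_{i \geq 0} \theta^{-i-1} = \frac{1}{\theta-1} = \frac{Q}{2}, \qquad \sum_{i \geq 0} (i+1) \theta^{-i-1} = \frac{\theta}{(\theta-1)^2} = \frac{Q(Q+2)}{4},
\]
which together reduce $\log(I_\infty / I_0)$ to $p^{-1}\bigl(\tfrac{Q}{2}\log C_2 + Q C_e K R^2 + \tfrac{Q(Q+2)\log 4}{4} + Q \log \tfrac{1}{1-\delta}\bigr)$. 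Raising to the $p$-th power then produces exactly the inequality (\ref{ineq:Lp mean value p>2}), with $C_3 = C_2^{Q/2} \cdot 4^{Q(Q+2)/4}$ and an overall factor $e^{Q C_e K R^2}(1-\delta)^{-Q}$ that is independent of $p$, as required.

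The only remaining technical point — and the piece I expect to need the most care — is the identification $\lim_{i\to\infty} I_i = \sup_{B(x_0,\delta R)} u$. Since $B(R_i) \supset B(x_0, \delta R)$,
\[
I_i \geq \Big(\tfrac{\mu(B(x_0,\delta R))}{V}\Big)^{1/p_i} \left(\mu(B(x_0,\delta R))^{-1} \int_{B(x_0,\delta R)} u^{p_i} \, d\mu \right)^{\!1/p_i},
\]
where the prefactor tends to $1$ and the second factor tends to $\sup_{B(x_0,\delta R)} u$ by the standard $L^q \to L^\infty$ convergence on a finite measure space; combined with the iterated bound this forces the desired inequality. If $u$ is a priori not known to be locally bounded, one first runs the whole argument with $u \wedge M$ in place of $u$ (still a non-negative subsolution of $Lv \geq 0$) and passes $M \to \infty$ by monotone convergence.
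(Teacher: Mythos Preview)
Your proof is correct and follows exactly the same Moser iteration as the paper: the same radii $R_i = \delta R + (1-\delta)R\,2^{-i}$, the same exponents $p_i = p\theta^i$, the same two series $\sum_{j\ge 1}\theta^{-j}=Q/2$ and $\sum_{j\ge 1} j\theta^{-j}=Q(Q+2)/4$, and the same constant $C_3 = C_2^{Q/2}\,2^{Q(Q+2)/2}$.

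One caveat on your final side remark: the claim that $u\wedge M$ is still a non-negative subsolution is false in general --- truncation from \emph{above} destroys subharmonicity (e.g.\ $u(x)=x^2$ on $\mathbb{R}$ has $u''=2\ge 0$, but $(u\wedge 1)''$ picks up negative Dirac masses at $x=\pm 1$). Fortunately the truncation is unnecessary: once $I_0<\infty$ the recursion itself forces every $I_i<\infty$ with a uniform bound, and your lower-bound argument $I_i \ge (\mu(\delta B)/V)^{1/p_i}\,\|u\|_{L^{p_i}(\delta B,\,d\mu/\mu(\delta B))}$ then yields $\sup_{\delta B} u \le \limsup_i I_i$ directly, with no a priori boundedness needed.
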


\begin{proof}
\
For $i=0,1,2, \cdots $, set $p_i = p \theta ^i $ where $\theta = 1+\frac{2}{Q}$. \\
And let $R_0=R$, $R_{i} - R_{i+1} = \frac{  (1-\delta)R} {2 ^{(i+1)}}  $, i.e.
\[
 R_i = R-\sum_{j=1}^i \frac{  (1-\delta)R} {2 ^{j}} =  R- (1-\delta)R ( 1 - \frac{1}{2^i} ) = \delta R + \frac{(1-\delta)R}{2^i} .
\]
By Lemma \ref{lem:moser_1step},
\[
 \int_{B(R_{i+1})} u^{p_{i+1}} d\mu \leq  \frac{C_2 2^{2(i+1)} V^{1-\theta} }{(1-\delta)^{2}} e^{2C_e K R^2} \left( \int_{ B(R_i)} u^{p_i} d\mu \right)^\theta,
\]

This yields
\begin{align*}
 \left( \int_{B(R_{i+1})} u^{p_{i+1}} d\mu\right)^{\frac{1}{p_{i+1}}}
  \leq \left( \frac{C_2 e^{2C_e K R^2} V^{1-\theta}}{ (1-\delta)^{2}} \right)^{\sum_{j=1}^{i+1} \frac{1}{p\theta^{j}}} 2^{2\sum_{j=1}^{i+1} \frac{j}{p\theta^{j}}}
   \left( \int_{B(R)} u^{p} d\mu\right)^{\frac{1}{p}} .
\end{align*}
Simple computation shows that
\[
 \sum_{j=1}^\infty \frac{1}{\theta^j} = \frac{1}{\theta-1} = \frac{Q}{2}, \quad \sum_{j=1}^\infty \frac{j}{\theta^j}
 = \frac{\theta}{(\theta-1)^2} = \frac{Q(Q+2)}{4}, \quad \lim_{i\rightarrow \infty} R_i = \delta R
\]
\[
 \lim_{i\rightarrow \infty} \left( \int_{B(R_{i+1})} u^{p_{i+1}} d\mu\right)^{\frac{1}{p_{i+1}}} = \sup_{ \delta B} \{ u \},
\]
Conclusively, where $C_3= C_2^{\frac{Q}{2}} 2^{\frac{Q(Q+2)}{2}} $, we have
\begin{align*}
 \sup_{ \delta B} \{ u \} \leq \left(  C_3 e^{Q C_e K R^2} (1-\delta)^{-Q} V^{-1}  \right)^{\frac{1}{p}} \left( \int_{B(R)} u^{p} d\mu\right)^{\frac{1}{p}}.
\end{align*}
\end{proof}

\begin{corollary}[$L^p$ mean value inequality, $0<p< 2$] \label{thm:Lp mean value p<2}
$L^p$ mean value inequality (\ref{ineq:Lp mean value p>2}) also holds for any $0<p<2$ with the constant $C_3$ replaced by some $C_4=C(Q,p)$.
In particular, for $p=1$
\begin{align*}
 \sup_{\delta B}\{u\} \leq C_m e^{c_m KR^2} (1-\delta)^{-Q} \left( \frac{1}{\mu(B)}\int_B u d\mu \right),
\end{align*}
where $C_m,c_m>0$ depend only on $\rho_2,\kappa,d$.
\end{corollary}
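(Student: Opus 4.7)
The plan is to deduce the case $0<p<2$ from the $p=2$ case of Theorem \ref{thm:Lp mean value p>2} via a sup--$L^p$ interpolation combined with Young's inequality and the additive Bombieri--Giusti iteration lemma. The direct Moser argument of Lemma \ref{lem:moser_1step} cannot be used here since the Caccioppoli coefficient $(p-1)^{-2}$ it produces is singular at $p=1$ and the test function $u^{p-1}$ is itself singular near $\{u=0\}$ when $p<1$; one must instead control the sup on a smaller ball by the $L^p$ mean on the full ball through a multiscale argument.

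First, for any $\delta R\le r<R'\le R$, I apply the $p=2$ case on the ball $B(x_0,R')$ with scaling ratio $r/R'$ and use the doubling property (\ref{ineq:doubling2}) with $\lambda=R/R'$ to replace $\mu(B(R'))^{-1}$ by at most $C_{d1}(R/R')^Q e^{c_d K R^2}V^{-1}$. The $(R/R')^Q$ from doubling merges cleanly with the $(R'/(R'-r))^Q$ from the base estimate into $(R/(R'-r))^Q$, and $(R')^2\le R^2$ absorbs the other exponential. Writing $\phi(s):=\sup_{B(s)}u$ and interpolating $\int_{B(R')} u^2\,d\mu\le \phi(R')^{2-p}\int_B u^p\,d\mu$, valid because $u\ge 0$ and $p<2$, one arrives at
\[
\phi(r)^2\le \tilde A\,(R'-r)^{-Q}\,\phi(R')^{2-p}\,M,
\]
with $\tilde A:=A\,e^{c'KR^2}\,R^Q$, $M:=V^{-1}\int_B u^p\,d\mu$, and $A,c'$ depending only on $\rho_2,\kappa,d$.

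Second, I convert the multiplicative bound into additive form via Young's inequality applied to the product $[\tilde A(R'-r)^{-Q} M]\cdot \phi(R')^{2-p}$ with conjugate exponents $2/p$ and $2/(2-p)$:
\[
\phi(r)^2\le \frac{p}{2}\,\tilde A^{2/p}(R'-r)^{-2Q/p}M^{2/p}+\Bigl(1-\frac{p}{2}\Bigr)\phi(R')^2.
\]
Setting $f:=\phi^2$ puts this in the form hypothesised by the additive Bombieri--Giusti iteration lemma, with contraction factor $\theta=1-p/2\in(0,1)$. Forward iteration along $r_k=\delta R+(1-\delta)R(1-\tau^k)$ for any $\tau\in(\theta^{p/(2Q)},1)$ makes the geometric series $\sum_k(\theta\tau^{-2Q/p})^k$ converge and, assuming $f$ is bounded on $[\delta R,R]$, drives the boundary term $\theta^n f(r_n)$ to zero. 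The outcome is $\phi(\delta R)^2\le C(Q,p)\,\tilde A^{2/p}((1-\delta)R)^{-2Q/p}M^{2/p}$; substituting $\tilde A=A e^{c'KR^2}R^Q$ and using $R^{Q/p}\cdot((1-\delta)R)^{-Q/p}=(1-\delta)^{-Q/p}$ yields
\[
\sup_{\delta B}u^p\le C_4\,e^{\tilde c KR^2}\,(1-\delta)^{-Q}\,V^{-1}\int_B u^p\,d\mu,
\]
and $p=1$ recovers the displayed corollary with $C_m,c_m$ depending only on $\rho_2,\kappa,d$.

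The main delicacy is the a priori boundedness of $\phi$ on $[\delta R,R]$, needed to kill the boundary term $\theta^n f(r_n)$ in the iteration. I would handle this by first running the argument for bounded non-negative subsolutions---for which $\phi\le \|u\|_\infty<\infty$ is automatic---checking that every constant on the right-hand side is independent of the a priori bound, and then extending to general $u$ by monotone approximation, the right-hand side passing to the limit via monotone convergence in $\int u^p\,d\mu$.
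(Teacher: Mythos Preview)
Your proposal is correct and follows a standard route that differs in one technical choice from the paper's proof. Both arguments start identically: apply the $p=2$ case of Theorem~\ref{thm:Lp mean value p>2} on an intermediate ball and interpolate via $\int u^2\le (\sup u)^{2-p}\int u^p$, arriving at a recursion of the form $\phi(r)\le C\,(R'-r)^{-Q/2}\,\phi(R')^{1-p/2}\,M^{1/2}$. The divergence is in how this recursion is closed. The paper keeps the \emph{multiplicative} form and iterates directly along radii $\sigma_0=\delta$, $\sigma_{i+1}=\sigma_i+(1-\sigma_i)/4$; since the exponent $1-p/2<1$, the accumulated product $\prod_i C_i^{(1-p/2)^i}$ converges geometrically and the tail $\phi(\sigma_n)^{(1-p/2)^n}\to 1$. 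You instead square, split the product with Young's inequality into an \emph{additive} recursion $\phi(r)^2\le (1-p/2)\,\phi(R')^2 + C(R'-r)^{-2Q/p}M^{2/p}$, and invoke the Bombieri--Giusti/Giaquinta iteration lemma as a black box. Both devices are classical and equivalent in strength; the paper's is a line shorter, while yours isolates the contraction factor $1-p/2$ explicitly and makes the dependence of constants on $p$ more transparent. Your remark on the a~priori boundedness of $\phi$ is well placed---the paper's multiplicative iteration needs the same hypothesis to kill its boundary term $\phi(\sigma_n)^{(1-p/2)^n}$, though the paper does not comment on it. In the present setting $u$ is smooth by hypoellipticity, so $\phi(s)<\infty$ for every $s<R$, which suffices for either scheme after restricting the iteration to $[\delta R, R_0]$ with $R_0<R$ and letting $R_0\nearrow R$.
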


\begin{proof}
Let $\frac{1}{2} < \sigma < 1$ and $\rho = \sigma+(1-\sigma)/4$. \\
By Theorem \ref{thm:Lp mean value p>2} (we can pick $R_0=\rho R$ in the proof) and
$ \int_B u^2 d\mu \leq \sup_{B} \{u^{2-p} \} \int_B u^p d\mu $,
\begin{align*}
\sup_{\sigma B} \{ u \} & \leq C e^{ c K R^2}(1-\sigma)^{-\frac{Q}{2}}\left( V^{-1} \int_{\rho B} u^2 d\mu \right)^{\frac{1}{2}} \\
          &\leq \left( C V^{-\frac{1}{2}} (\int_B u^p d\mu)^{\frac{1}{2}} e^{ c K R^2}\right)(1-\sigma)^{-\frac{Q}{2}} \sup_{\rho B} \{ u^{1-\frac{p}{2}} \} .
\end{align*}

\
Now fix $\delta \in (\frac{1}{2},1 )$ and set $\sigma_0 = \delta $, $\sigma_{i+1}=\sigma_{i}+(1-\sigma_{i})/4 =\sigma_i+ (\frac{3}{4})^i (1-\delta)/4 $. Then
\begin{align*}
\sup_{\sigma_i B}\{u\} \leq \Lambda (\frac{4}{3})^{Qi/2} (1-\delta)^{-\frac{Q}{2}} (\sup_{\sigma_{i+1} B} \{u\})^{1-\frac{p}{2}},
\end{align*}
where $\Lambda=\left( C (V^{-1}\int_B u^p d\mu)^{\frac{1}{2}} e^{ c K R^2}\right)$.

\
Finally, the same iteration of Theorem \ref{thm:Lp mean value p>2} yields
\begin{align*}
\sup_{\delta B}\{u\} &\leq (\frac{4}{3})^{\frac{2Q}{p^2}} \Lambda^{\frac{2}{p}}(1-\delta)^{-\frac{Q}{p}} \\
 & = (\frac{4}{3})^{\frac{2Q}{p^2}} C^{\frac{2}{p}} e^{\frac{2c}{p}K R^2} (1-\delta)^{-\frac{Q}{p}} \left( V^{-1} \int_B u^p d\mu \right)^{\frac{1}{p}}.
\end{align*}

\end{proof}

\section{Uniqueness of the positive solution }

\subsection{Minimality of the heat semigroup for positive solutions }

\
To prove Theorem \ref{thm:uniq of positive}, we reduce the question to the zero initial data. Following Lemma \ref{lem:minimality} enables the reduction.
This section is based on the idea of \cite{DON}.


\begin{lemma}[minimality of the heat semigroup] \label{lem:minimality}  Let $u \in C(\M\times(0,T))$ 
be a non-negative supersolution of the
heat equation (\ref{heat_eq}) with initial data $f \in L^2_{loc}(\M)$, $f\geq0$. \\
Then $P_t f(x)=\int_\M p_t(x,y) f(y)d\mu(y)$ is a smooth solution of (\ref{heat_eq})
satisfying $P_t f \xrightarrow{L^2_{loc}} f$ as $t\rightarrow 0$ and $u(\cdot,t)\geq P_t f$.
\end{lemma}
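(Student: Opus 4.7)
The plan is to construct $P_tf$ as a monotone limit of Dirichlet solutions on exhausting balls and then to dominate each of these by $u$ using the parabolic maximum principle.

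First I would approximate the initial data from below. Fix a reference point $x_0$ and set $f_n = \min(f,n) \chi_{B(x_0,n)} \in L^\infty \cap L^2$, so $f_n \nearrow f$ a.e. Let $B_k = B(x_0,k)$ and define
\[
u_{n,k}(x,t) = \int_{B_k} p^{B_k,D}_t(x,y) f_n(y)\, d\mu(y),
\]
the solution of the Dirichlet heat equation on $B_k$ with initial data $f_n$ and zero boundary data. Standard monotone properties of Dirichlet heat kernels (obtained by comparing $p^{B_{k+1},D}_t - p^{B_k,D}_t$ via the maximum principle, extending by zero) give $u_{n,k}$ increasing in both $k$ and $n$. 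As $k \to \infty$ the kernels $p^{B_k,D}_t$ increase monotonically to $p_t$, so by monotone convergence $u_{n,k} \nearrow P_t f_n$ pointwise, and then $P_t f_n \nearrow P_tf$ as $n \to \infty$. Smoothness of $P_tf$ and the $L^2_{loc}$ convergence $P_tf \to f$ as $t \to 0$ follow from hypoellipticity of $L$ and the strong continuity of the heat semigroup on $L^2(B_k,\mu)$ together with the fact that $P_tf_n \to f_n$ in $L^2$.

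The heart of the argument is the comparison $u \geq u_{n,k}$ on $B_k \times (0,T)$ for each $n,k$. The difference $w = u - u_{n,k}$ is a nonnegative supersolution on the parabolic cylinder $B_k \times (0,T)$: on the lateral boundary $\partial B_k \times (0,T)$ one has $u \geq 0 = u_{n,k}$, while at the parabolic bottom the limit as $t \to 0$ of $w$ is at least $f - f_n \geq 0$ in the relevant (e.g.\ $L^2_{loc}$) sense. Applying the parabolic maximum principle for the subelliptic operator $\partial_t - L$ on $B_k$ then gives $w \geq 0$. Passing $k \to \infty$ and $n \to \infty$ in the inequality $u \geq u_{n,k}$ yields $u \geq P_tf$, which is the desired minimality.

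The main obstacle I anticipate is justifying the maximum principle at the parabolic bottom, because $u$ is only assumed continuous on $\M \times (0,T)$ and attains its $L^2_{loc}$ initial value only in a generalized sense. To circumvent this I would apply the maximum principle on $B_k \times (\tau,T)$ for small $\tau > 0$, comparing $u(\cdot,t+\tau)$ with the Dirichlet solution on $B_k$ started from $u(\cdot,\tau)$ (or from a smooth mollification of $f_n$), and then send $\tau \to 0$ using Fatou and the $L^2_{loc}$ initial trace of $u$. The boundary continuity of $u_{n,k}$ at $\partial B_k$ is handled by replacing $B_k$ with a slightly smaller ball $B(x_0, k - \eta)$ and letting $\eta \to 0$, so that everything is tested on a cylinder where $u_{n,k}$ and $u$ are smooth enough for the classical Bony–Hörmander strong maximum principle to apply.
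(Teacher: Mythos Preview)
Your proposal is correct and follows essentially the same route as the paper: compare $u$ with Dirichlet heat solutions on an exhaustion by balls via the parabolic maximum principle, then pass to the limit using monotonicity of Dirichlet kernels and truncation of the initial data. The paper is terser about the parabolic bottom (it simply invokes the maximum principle for $P_t^{\Omega,D}f - u$), whereas your $\tau \to 0$ workaround makes that step more explicit; otherwise the arguments coincide.
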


\begin{proof}
\
For any $\Omega \Subset \M$,
we denote $P_t^{\Omega,D} $ the Dirichlet heat semigroup associated with $L$ on $\Omega$.
Using the maximum principle for $P_t^{\Omega,D} f - u(\cdot,t)$, we have
\[
 u(x,t)\geq P_t^{\Omega,D} f(x), \quad \forall x\in \Omega
\]

\
Denote $f_k = f 1_{\Omega_k} \in L^2(\M)$ for the exhaustion $\{ \Omega_k\}$. As shown above, $u(\cdot,t)\geq P_t^{\Omega_k,D} f\geq P_t^{\Omega_k,D} f_i$ for all $i$.
Since $P^{\Omega_k,D}_t f_i \xrightarrow{L^2(\M)} P_t f_i$ as $k\rightarrow \infty$, we have $u(\cdot,t) \geq P_t f_i $ almost everywhere for all $i$.
Therefore,
\[
 u(\cdot,t) \geq P_t f \quad \mbox{ almost everywhere}.
\]

\
To prove that the smooth $P_t f$ solves the heat equation, first we see that $P_t f\in L^1_{loc}(\M) $ from the above estimate.
Denote
\[
 u_k = P_t (\min(f,k) 1_{\Omega_k} ),
\]
then $u_k$ is a smooth solution of the subelliptic heat equation and
$ u_k \nearrow P_t f $ as $k\rightarrow \infty$ at any $(x,t)\in \M\times(0,T) $.

\
For any $\varphi \in C_0^\infty(\M\times(0,T))$, since $P_t f\in L^1_{loc}(\M) $,
\[
 \left| (\partial_t \varphi +L\varphi) u_k\right| \leq (\sup |\partial_t \varphi + L\varphi |) 1_{\supp \varphi} P_t f \in L^1(\M),\quad \forall k\in \mathbb N.
\]
This allows us to take the limit of the integrand on the left hand side of
\begin{align*}
  \int_0^T \int_\M (\partial_t \varphi +L\varphi) u_k d\mu dt = \int_0^T \int_\M \varphi \left(L-\partial_t\right)u_k d\mu dt =0.
\end{align*}
Therefore $P_t f$ is a distributional solution of
the subelliptic heat equation, and also it is smooth by the smooth convergence of $u_k$ to $P_t f$ and the hypoellipticity of $L-\partial_t$.

\
Once the smoothness of $P_t f$ and $u\geq P_t f$ are proved, the initial condition is straightforward as follows :
On any $\Omega \Subset \M$,
\[
 P_t (f 1_\Omega) \leq P_t f \leq u(\cdot,t).
\]
When $t\rightarrow 0$, $u\xrightarrow{L^2(\Omega)} f $ and $ P_t (f 1_\Omega)\xrightarrow{L^2(\M)} f 1_\Omega $. Hence $P_t f \xrightarrow{L^2(\Omega)} f$.
\end{proof}

\subsection{proof of Theorem \ref{thm:uniq of positive} }
\
From the minimality Lemma \ref{lem:minimality}, for any non-negative continuous solution $u$ of (\ref{heat_eq}),
\[
 w(x,t) = u(x,t)-P_t u(x,0)
\]
is a non-negative solution of (\ref{heat_eq}) with zero initial data.
Thus we can reduce the uniqueness of the positive solution to the zero initial data case.

\
Let $w(x,t)$ be any non-negative solution of the heat equation (\ref{heat_eq}) with initial data $f\equiv0$.\\
Define $v(x,t) = \int_0^t w(x,s)ds$. Our goal is to show $v \equiv 0$, and so is $w$.

\begin{remark}
$v(x,t) = \int_0^t w(x,s)ds$ is a non-negative solution of the heat equation (\ref{heat_eq}) with zero initial data, and
subharmonic in $x$, i.e. $L v(\cdot,t) = \int_0^t L w(\cdot,s)ds = w(\cdot,t) \geq 0 $.
\end{remark}

\
The following growth estimate condition is originally suggested by Tikhonov for the uniqueness of the solution for the heat equation.

\begin{proposition} [Growth estimate of the solution, Tikhonov's condition]
For any $\epsilon >0$ and $0\leq t \le \epsilon$, if $v \in C(\M \times (0,\epsilon))$
is a non-negative solution of the subelliptic heat equation (\ref{heat_eq}) satisfying $Lv(\cdot,t) \geq 0$, then
\[
 v(x,t) \le C_1 \exp (C_2 d^2(p,x)),
\]
where $C_1 = C_1(\epsilon)>0$, $C_2 = C_2 (\epsilon)>0$, and $d(p,\cdot)$ is the distance from a fixed $p\in \M$.
\end{proposition}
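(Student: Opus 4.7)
The plan is to combine the minimality of the heat semigroup (Lemma \ref{lem:minimality}), the Gaussian lower bound of the heat kernel (\ref{ineq:hk LB}), the volume doubling property (\ref{ineq:doubling2}), and the $L^1$ mean value inequality for non-negative subharmonic functions (Corollary \ref{thm:Lp mean value p<2}). The starting observation is that $\partial_t v = Lv \ge 0$, so $v(x, \cdot)$ is non-decreasing and $v(\cdot, t)$ is non-negative and subharmonic for every fixed $t$; in particular, the finite value $v(p, \tau)$ at the base point $p$ and any auxiliary later time $\tau \in (t, \epsilon)$ can be used to control $v(\cdot, t)$ everywhere through the heat semigroup.

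I would first fix $t \in (0, \epsilon)$ and $\tau \in (t, \epsilon)$ and apply Lemma \ref{lem:minimality} to the time-shifted solution $(x, s) \mapsto v(x, s + t)$, whose initial datum $v(\cdot, t)$ is continuous and hence in $L^2_{\mathrm{loc}}(\M)$. This yields
\[
 v(p, \tau) \ge \int_\M p_{\tau - t}(p, z)\, v(z, t)\, d\mu(z).
\]
For an arbitrary $y \in \M$ I restrict the integral to the unit ball $B(y, 1)$, on which $d(p, z) \le d(p, y) + 1$, and apply the heat-kernel lower bound (\ref{ineq:hk LB}) to extract a uniform lower estimate of $p_{\tau - t}(p, z)$ on $B(y, 1)$. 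Rearranging gives
\[
 \int_{B(y, 1)} v(z, t)\, d\mu(z) \le v(p, \tau)\, \mu\bigl(B(p, \sqrt{\tau - t})\bigr)\, \exp\!\bigl(A\,(1 + d(p, y)^2)\bigr),
\]
for a constant $A$ depending on $\epsilon$, $K$, and the structural constants $\rho_2, \kappa, d$.

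I would then convert this $L^1$ estimate into a pointwise estimate by applying Corollary \ref{thm:Lp mean value p<2} to the subharmonic $v(\cdot, t)$ on $B(y, 1)$ with $\delta = 1/2$, which gives
\[
 v(y, t) \le C_m e^{c_m K}\, 2^Q \cdot \frac{1}{\mu(B(y, 1))} \int_{B(y, 1)} v(z, t)\, d\mu(z).
\]
The volume ratio $\mu(B(p, \sqrt{\tau - t}))/\mu(B(y, 1))$ is controlled by doubling (\ref{ineq:doubling2}) applied at $y$ with scale $\sqrt{\tau - t} + d(p, y)$, contributing an additional factor $C(1 + d(p, y))^Q \exp(c K(1 + d(p, y)^2))$. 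Absorbing polynomial terms into the exponential via $(1 + r)^Q \le C_\eta e^{\eta r^2}$ yields the claimed bound $v(y, t) \le C_1 \exp(C_2\, d(p, y)^2)$, where $C_1, C_2$ depend on $\epsilon$ (through the chosen $\tau$) and on the finite constant $v(p, \tau)$ attached to the specific solution.

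The main obstacle is the time-singular exponent $(d(p, y) + 1)^2/(\tau - t)$ produced by the heat-kernel lower bound, which forces $\tau - t$ to stay bounded below in terms of $\epsilon$. I would handle this by choosing $\tau = (t + \epsilon)/2$ so that $\tau - t = (\epsilon - t)/2$ is controlled uniformly on any subinterval of the form $[0, \epsilon - \delta]$; combined with the monotonicity $v(\cdot, t) \le v(\cdot, t')$ for $t \le t' < \epsilon$, this extends the estimate to the full range of $t$ at the cost of enlarging the constants by factors depending on $\epsilon$.
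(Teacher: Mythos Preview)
Your proposal is correct and follows essentially the same route as the paper: minimality of the semigroup (Lemma \ref{lem:minimality}) combined with the Gaussian lower bound (\ref{ineq:hk LB}) to control the $L^1$ mass of $v(\cdot,t)$ on a ball, followed by the $L^1$ mean value inequality (Corollary \ref{thm:Lp mean value p<2}) for the subharmonic slice $v(\cdot,t)$.

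Two small tactical differences are worth noting. First, the paper takes the ball $B=B(x,\,d(p,x)+1)$ rather than a unit ball; since $B(p,1)\subset B$, the factor $1/\mu(B)$ from the mean value inequality is bounded by the fixed constant $1/\mu(B(p,1))$, so no doubling step is needed to control the volume ratio. Second, the paper simply fixes a time gap $T>0$ and uses $v(p,t+T)$, which stays uniformly bounded as $t$ ranges over $[0,\epsilon]$; this is legitimate in the application because the $v$ under consideration is actually defined on $\M\times[0,\infty)$. Your variable gap $\tau-t=(\epsilon-t)/2$ forces the constants to blow up as $t\to\epsilon$, and the monotonicity $v(\cdot,t)\le v(\cdot,t')$ goes the wrong way to repair this (it bounds earlier times by later ones, not conversely). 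In the context of the paper this is harmless---just take a fixed $T$ as the paper does---but as written your last paragraph does not quite close the argument on the full interval.
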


\begin{proof}
Let $B=B(x, d(p,x)+1 )$. Fix $T>0$. From the minimality Lemma \ref{lem:minimality},
\[
 v(p,t+T) \geq P_T v(\cdot,t) = \int_M p_T(p,y) v(y,t) d\mu(y) \geq \int_B p_T(p,y) v(y,t) d\mu(y).
\]

\
From the curvature condition $CD(-K,\rho_2,\kappa,d)$, the lower bound of heat kernel (\ref{ineq:hk LB}) is
\begin{align*}
 p_T(p,y)  &\geq C_3 \exp(-C_4 d^2(p,y) ),
\end{align*}
where $C_3 = C_3(p,T,K,\rho_2,\kappa,d)>0$, $C_4 = C_4(T,K,\rho_2,\kappa,d)>0$.

\
By the triangle inequality $d(p,y) \leq 2d(p,x) +1$ for $y\in B$,
\[
 \int_B v(y,t)d\mu(y) \leq C_5 \exp (C_6 d^2(p,x)) ~ v(p,t+T).
\]
By $L^1$ mean value estimate of Corollary \ref{thm:Lp mean value p<2} for the subharmonic function $v(\cdot,t)$
\[
 v(x,t) \leq C_7 \exp(C_8 d(p,x)^2) \int_B v(y,t) d\mu(y),
\]
where $C_7, C_8>0 $ depend on $ K,\rho_2,\kappa,d$.
\
Therefore, we obtain
\[
 v(x,t) \leq C_9 \exp(C_{10} d^2 (p,x)) v(p,t+T),
\]
where the constants depend on $p,T,K,\rho_2,\kappa,d $.
As $t$ varies from $0$ to $\epsilon$,  $v(p,t+T)$ remains uniformly bounded in $t$. So we have the desired conclusion.
\end{proof}

\
Together with the previous proposition, the proof of Theorem \ref{thm:uniq of positive} is finished by the following proposition.

\begin{proposition}
 If $v(x,t)$ is a solution of (\ref{heat_eq}) with initial $f(x)\equiv 0$ satisfying
\[
 |v(x,t)| \le C_1 \exp{C_2 d^2(p,x)}
\]
for some positive $C_1$, $C_2$, then $v\equiv 0$.
\end{proposition}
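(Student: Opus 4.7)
My approach will be an integrated maximum principle (a weighted $L^2$ energy argument in the Grigor'yan--Aronson spirit), adapted to the subelliptic setting. First I would reduce to a short time interval: if $v\equiv 0$ on $\M\times[0,T_0]$ for some $T_0=T_0(C_2,K,\rho_2,\kappa,d)>0$, then iterating with $v(\cdot,T_0)\equiv 0$ as new initial data extends the conclusion to all of $\M\times(0,\infty)$. By hypoellipticity of $\partial_t-L$, the continuous $v$ is smooth on $\M\times(0,\infty)$, so the pointwise computations below are justified.

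Next I would introduce a Gaussian-type weight. Fix $a>0$ with $T_0<a$ (both to be chosen small) and set
\[
 \psi(x,t)=\frac{\phi(x)^2}{2(a-t)},\qquad (x,t)\in\M\times[0,T_0],
\]
where $\phi$ is a Lipschitz function with $\Gamma(\phi)\leq 1$ a.e.\ and $\phi\asymp d(p,\cdot)$; the intrinsic subunit distance $d(p,\cdot)$ is itself admissible, after mollification if necessary. A direct computation gives $2\psi_t-\Gamma(\psi)=\phi^2(1-\Gamma(\phi))/(a-t)^2\geq 0$. Letting $\eta_R$ be the cutoff from Remark~\ref{remark:cutoff} with $\eta_R\equiv 1$ on $B(p,R)$, $\supp\eta_R\subset B(p,2R)$, $\sqrt{\Gamma(\eta_R)}\leq C/R$, I consider
\[
 J_R(t)=\int_\M v(x,t)^2\,\eta_R(x)^2\,e^{-\psi(x,t)}\,d\mu(x).
\]
Differentiating in $t$, using $v_t=Lv$, integrating by parts against the compactly supported weight $\eta_R^2 e^{-\psi}$, and applying Cauchy--Schwarz to the cross term $2v\,\Gamma(v,\eta_R^2 e^{-\psi})$ should yield
\[
 \frac{d}{dt}J_R(t)\leq -\tfrac{1}{2}\int \eta_R^2 e^{-\psi}(2\psi_t-\Gamma(\psi))v^2\,d\mu+C\int e^{-\psi}v^2\,\Gamma(\eta_R)\,d\mu,
\]
and the first term is $\leq 0$ by the construction of $\psi$.

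Since $v(\cdot,0)\equiv 0$, one has $J_R(0)=0$, so integrating in time reduces matters to sending the spatial error integral to $0$ as $R\to\infty$. That integrand is supported in the annulus $\{R\leq d(p,x)\leq 2R\}$, where $|v|^2\leq C_1^2 e^{2C_2 d(p,x)^2}$, $\Gamma(\eta_R)\leq C^2/R^2$, and $e^{-\psi}\leq e^{-R^2/(2a)}$. Estimating the volume of the annulus by (\ref{ineq:doubling2}) and choosing $a$ small enough that $1/(2a)$ strictly exceeds $2C_2$ plus the doubling exponent (which is $O(K)$), the error will decay like $e^{-cR^2}$ and vanish as $R\to\infty$. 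This gives $\int v^2 e^{-\psi(\cdot,t)}d\mu\equiv 0$ for $t\in[0,T_0]$, hence $v\equiv 0$ on $\M\times[0,T_0]$, and iteration finishes the proof. I expect the main obstacle to be precisely this bookkeeping: the Gaussian weight must dominate both the a priori growth $e^{2C_2 d^2}$ of $v$ and the exponential volume factor $e^{cKR^2}$, which forces $a$ (and $T_0$) to depend on $K$ as well as $C_2$ and explains why only a short time step is accessible at once. A secondary technicality is justifying $\Gamma(\phi)\leq 1$ a.e.\ for $\phi=d(p,\cdot)$ in the subelliptic setting, handled by mollifying the intrinsic distance.
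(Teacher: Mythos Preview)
Your proposal is correct and is essentially the same argument the paper invokes: the paper simply says that, given the Lipschitz cut-off of Remark~\ref{remark:cutoff} and integration by parts, one follows verbatim the proof of Corollary~11.10 in Grigor'yan's book~\cite{Gryg}, which is precisely the Aronson--Grigor'yan weighted $L^2$ energy (integrated maximum principle) you have outlined. Your identification of the two bookkeeping points---that $a$ (hence $T_0$) must be chosen small enough for the Gaussian weight $e^{-\phi^2/2(a-t)}$ to dominate both the growth $e^{2C_2 d^2}$ of $v^2$ and the exponential volume factor $e^{cKR^2}$ from~(\ref{ineq:doubling2}), and that one needs $\Gamma(d(p,\cdot))\le 1$ a.e.---is exactly what the paper's one-line citation is hiding.
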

Existence of Lipschitz cut-off function and integration by part allow us to follow exactly the same proof of corollary 11.10 in \cite{Gryg}.

\section{Uniqueness of $L^p$ solution}

\subsection{proof of Theorem \ref{thm:uniq of Lp}, $p>1$}

For $p=\infty$, the uniqueness of the $L^\infty$ solution, or equivalently the stochastic completeness of $\M$, can be found in \cite{BG}.
If $p\in(1,\infty)$, without any curvature assumption, the uniqueness follows immediately by adapting the idea of \cite{Li}.
\begin{theorem}
Let $v(x,t)$ is a non-negative function defined on $\M \times (0,T)$ with
\begin{align*}
 & \left( \frac{\partial}{\partial t} -L \right) v(x,t)  \leq 0    \\
 & v\xrightarrow{L^p_{loc}} 0 \textmd{  as } t\rightarrow 0  \\
 & v(\cdot,t) \in L^p(\M) \quad \forall t\in (0,T),
\end{align*}
then $v(x,t)\equiv 0$ on $\M\times (0,T)$.\\
In particular, any $L^p$ solution of the heat equation is uniquely determined by its initial data in $L^p(\M)$.
\end{theorem}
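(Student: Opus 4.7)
The plan is to run an $L^p$ energy estimate with a spatial cut-off, in the spirit of \cite{Li}. Let $\phi_R$ be a Lipschitz cut-off as in Remark \ref{remark:cutoff} with $\phi_R \equiv 1$ on $B(x_0,R)$, $\supp \phi_R \subset B(x_0,2R)$, and $\sqrt{\Gamma(\phi_R)} \le C/R$ almost everywhere, and set $F_R(t) := \int_\M v^p(x,t)\phi_R^2(x)\,d\mu(x)$. Since $v(\cdot,t) \in L^p(\M)$, $F_R(t)$ is finite, and $F_R(t) \nearrow \|v(\cdot,t)\|_p^p$ as $R\to\infty$ by monotone convergence.

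Multiplying the subsolution inequality $(\p_t - L)v \le 0$ by the non-negative, compactly-supported test function $p v^{p-1}\phi_R^2$ and integrating by parts over $\M$ gives
\begin{align*}
\frac{d}{dt}F_R(t) \le -p(p-1)\int v^{p-2}\phi_R^2\,\Gamma(v)\,d\mu - 2p\int v^{p-1}\phi_R\,\Gamma(v,\phi_R)\,d\mu.
\end{align*}
A Cauchy--Schwarz splitting with weight $p-1$ absorbs the cross term into the good Dirichlet-type term, yielding
\begin{align*}
F_R'(t) \le \frac{p}{p-1}\int v^p\,\Gamma(\phi_R)\,d\mu \le \frac{C_p}{R^2}\int_{B(x_0,2R)} v^p(\cdot,t)\,d\mu.
\end{align*}
Integrating in time from $\tau>0$ to $t<T$ and letting $\tau\to 0$ (the boundary term $F_R(\tau)$ vanishes by the $L^p_{loc}$ initial condition applied on the compact support of $\phi_R$) yields
\begin{align*}
F_R(t) \le \frac{C_p}{R^2}\int_0^t \|v(\cdot,s)\|_p^p\,ds.
\end{align*}

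The hard part is the final limit $R \to \infty$: the left-hand side tends to $\|v(\cdot,t)\|_p^p$ by monotone convergence, but the right-hand side is only controlled if $\int_0^t \|v(\cdot,s)\|_p^p \, ds$ is finite, which the hypotheses do not directly supply. To close this gap I would replace the $L^p$-norm with the cut-off quantity $F_{4R}(s)$ (using $1_{B(x_0,2R)} \le \phi_{4R}^2$) to obtain the self-referential recursion $F_R(t) \le \tfrac{C_p}{R^2}\int_0^t F_{4R}(s)\,ds$ and iterate it $n$ times; via the convolution identity $\int_0^t\int_0^{s_1}\cdots\int_0^{s_{n-1}}g(s_n)\,ds_n\cdots ds_1 = \tfrac{1}{(n-1)!}\int_0^t(t-s)^{n-1}g(s)\,ds$, this yields a prefactor of order $C_p^n t^n/((n-1)!\,4^{n(n-1)}R^{2n})$, which decays superexponentially in $n$ and beats the pointwise-finite factor $F_{4^n R}(s) \le \|v(\cdot,s)\|_p^p$. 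Sending $n \to \infty$ with $R$ fixed then forces $F_R(t) = 0$, and hence $v \equiv 0$. The $L^p$ uniqueness of solutions with the same initial data follows by applying this to $v = |u_1 - u_2|$, which is a non-negative $L^p$ subsolution with zero $L^p_{loc}$ initial limit by Kato's inequality.
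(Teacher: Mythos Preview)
Your core argument --- cut off with $\phi_R$, test against $p\,v^{p-1}\phi_R^2$, integrate by parts, absorb the cross term via Cauchy--Schwarz --- is exactly the paper's, and your differential inequality $F_R'(t)\le \tfrac{C_p}{R^2}\int_{B(2R)} v^p\,d\mu$ is the same one the paper reaches. The paper then simply integrates from $0$ to $\tau$, drops the non-negative $\Gamma$-term, and sends $R\to\infty$, tacitly using $\int_0^\tau\int_\M v^p\,d\mu\,dt<\infty$ without comment; you are right that this is not supplied directly by the stated hypotheses.

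Your iteration, however, does not close that gap. After $n$ steps you bound $F_{4^nR}(s)\le\|v(\cdot,s)\|_p^p$ and invoke the Cauchy repeated-integration formula to extract $t^n/(n-1)!$; but that formula yields a finite quantity only if $\int_0^t\|v(\cdot,s)\|_p^p\,ds<\infty$ --- precisely the assumption you set out to avoid, and one under which the single-step $R\to\infty$ argument already succeeds. If instead you bound $F_{4^nR}\le M_n:=\sup_{[0,t]}F_{4^nR}$ (finite by continuity together with the $L^p_{loc}$ initial condition), the hypotheses give no control on the growth of $M_n$ with $n$, so there is no reason the super-exponentially decaying prefactor must win. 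In short, the recursion adds nothing beyond the one-step estimate. For the intended application (the ``in particular'' clause) one actually has $v=|u_1-u_2|\to 0$ in $L^p(\M)$, not merely $L^p_{loc}$; running the estimate on $[\epsilon,\tau]$ and sending $R\to\infty$ first gives the monotonicity $\|v(\cdot,\tau)\|_p\le\|v(\cdot,\epsilon)\|_p$, after which $\epsilon\to 0$ finishes cleanly.
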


\begin{proof}
Fix $x_0 \in \M$ an arbitrary base point.

From remark \ref{remark:cutoff}, we choose $\psi(x) \in C_0 (B(x_0,2R))$ a cut-off function satisfying $\psi|_{B(x_0,R)} \equiv 1 $, $0\leq \psi \leq 1$,
$\| \sqrt{\Gamma(\psi)}\|_\infty \leq \frac{C}{R}$ for some $C>0$.

\
Since $v$ is a subsolution with the zero initial data, for any $\tau\in (0,T)$,
\begin{align*}
\int_0^\tau \int_\M & \psi^2(x) v^{p-1}(x,t) L v(x,t) d\mu(x) dt   \geq \int_0^\tau \int_\M \psi^2(x) v^{p-1} \frac{\partial v}{\partial t} d\mu(x) dt \\
 & = \frac{1}{p} \int_0^\tau \frac{\partial }{\partial t} \left( \int_\M \psi^2(x) v^{p} d\mu(x)\right) dt
   = \frac{1}{p}\int_\M \psi^2(x) v^{p}(x,\tau) d\mu(x).
\end{align*}
On the other hand, integrating by parts yields
\begin{align*}
\int_0^\tau \int_\M & \psi^2(x) v^{p-1}(x,t) L v(x,t) d\mu(x) dt \\
 &= - \int_0^\tau \int_\M 2\psi v^{p-1} \Gamma(\psi,v)  d\mu dt - \int_0^\tau \int_\M  \psi^2 (p-1) v^{p-2} \Gamma(v)  d\mu dt .
\end{align*}
On the right hand side, observing
\begin{align*}
0\leq & \left(\sqrt{\frac{2}{p-1}\Gamma(\psi)}v - \sqrt{\frac{p-1}{2}\Gamma(v)}\psi \right)^2  \leq \frac{2}{p-1}\Gamma(\psi)v^2 + 2 \Gamma(\psi,v) \psi v +\frac{p-1}{2}\Gamma(v)\psi^2 ,
\end{align*}
we obtain the following estimate.
\begin{align*}
 \int_0^\tau \int_\M & \psi^2(x) v^{p-1}(x,t) L v(x,t) d\mu(x) dt \\
 & \leq  \int_0^\tau \int_\M  \frac{2}{p-1} \Gamma(\psi) v^p  d\mu dt - \int_0^\tau \int_\M  \frac{p-1}{2}\psi^2 v^{p-2} \Gamma(v)  d\mu dt \\
 &= \int_0^\tau \int_\M  \frac{2}{p-1} \Gamma(\psi) v^p  d\mu dt - \frac{2(p-1)}{p^2} \int_0^\tau \int_\M  \psi^2 \Gamma(v^{p/2})  d\mu dt .
\end{align*}
Combining with the previous conclusion and the assumption $|\sqrt{\Gamma(\psi)}|\leq \frac{C}{R}$,
\[
 \int_\M \psi^2(x) v^{p}(x,\tau) d\mu(x) + \frac{2(p-1)}{p} \int_0^\tau \int_\M  \psi^2 \Gamma(v^{p/2})  d\mu dt
   \leq \frac{2 pC^2}{(p-1) R^2} \int_0^\tau \int_\M   v^p  d\mu dt.
\]
As $R \rightarrow \infty$, since $ \Gamma(v^{p/2})\geq 0 $, we have
\[
 \int_\M v^p(x,\tau) d\mu(x) =0 \quad \forall \tau \in (0,T) .
\]
Thus, $v\equiv 0$.
\end{proof}

\subsection{Hamilton's inequality}

\
Before we move on to $L^1$ solutions, we will prove the gradient estimate of the logarithm of the heat kernel. 
We will apply subelliptic version of Hamilton's inequality which was originally proved for closed Riemannian manifolds in \cite{Ham}, 
then for non-compact Riemannian manifolds in \cite{Kot}.

\begin{proposition} [Hamilton's inequality]
Assume that $\M$ satisfies the curvature condition $CD(-K,\rho_2,\kappa,d)$. If a positive solution $u\in \mathcal A_\epsilon$ to the subelliptic heat 
equation satisfies $u \leq M$ on $\M\times (0,T)$ for some $M>0$ and $0<T\leq\infty$, one has
\begin{align}  \label{ineq:Hamiltons inequality}
 t \Gamma( \ln u(x,t) ) \leq \left( 1+\frac{2\kappa}{\rho_2}+2Kt \right) \ln \left( \frac{M}{u(x,t)}\right)
\end{align}
for all $(x,t)\in \M \times (0,T)$.
\end{proposition}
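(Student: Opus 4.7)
I set $f := \ln u$ and $\phi := \ln(M/u) \geq 0$, both smooth on $\M \times (0,T)$ and bounded (since $u \in \mathcal A_\epsilon$ is bounded and bounded away from $0$). The heat equation for $u$ gives $\partial_t f = Lf + \Gamma(f)$, hence $(L-\partial_t)f = -\Gamma(f)$ and $(L-\partial_t)\phi = \Gamma(f)$. The plan is to apply a parabolic maximum principle to a Hamilton-type functional
$$\Phi(x,t) \; := \; t\,\Gamma(f) \; + \; \rho_2 t^2\, \Gamma^Z(f) \; - \; a(t)\phi, \qquad a(t) := 1 + \frac{2\kappa}{\rho_2} + 2Kt.$$
The $\rho_2 t^2\Gamma^Z(f)$ correction is dictated by the generalized CD inequality, in which only the coupled quantity $\Gamma_2 + \nu\Gamma_2^Z$ enjoys a clean lower bound; so a $\Gamma^Z$-piece must be carried along. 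Showing $\Phi \leq 0$ is enough, as $\Gamma^Z(f) \geq 0$ then yields (\ref{ineq:Hamiltons inequality}).

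The central computation is to expand $(L-\partial_t)\Phi$ via the Bochner-type identities
$$(L-\partial_t)\Gamma(f) = 2\Gamma_2(f) - 2\Gamma(f,\Gamma(f)), \qquad (L-\partial_t)\Gamma^Z(f) = 2\Gamma_2^Z(f) - 2\Gamma(f,\Gamma^Z(f)),$$
the second one requiring hypothesis (H.2) to swap $\Gamma$ and $\Gamma^Z$. Applying $CD(-K,\rho_2,\kappa,d)$ with the tuned parameter $\nu = \rho_2 t$ gives
$$2t\Gamma_2(f) + 2\rho_2 t^2 \Gamma_2^Z(f) \geq \frac{2t}{d}(Lf)^2 - \Bigl(2Kt + \frac{2\kappa}{\rho_2}\Bigr)\Gamma(f) + 2\rho_2 t\, \Gamma^Z(f),$$
and the choice $c(t) = \rho_2 t^2$ is precisely what makes the $\Gamma^Z(f)$-coefficients cancel (since $c'(t) = 2\rho_2 t$). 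Rewriting the drift with the identity $t\Gamma(f) + \rho_2 t^2 \Gamma^Z(f) = \Phi + a(t)\phi$, one gets
$$-2\Gamma\bigl(f,\, t\Gamma(f)+\rho_2 t^2 \Gamma^Z(f)\bigr) = -2\Gamma(f,\Phi) + 2a(t)\Gamma(f),$$
and the remaining coefficient of $\Gamma(f)$ reduces to $a(t) - 1 - 2Kt - 2\kappa/\rho_2 = 0$ by the very definition of $a$. Combining everything yields the master inequality
$$(L-\partial_t)\Phi + 2\Gamma(f,\Phi) \; \geq \; \frac{2t}{d}(Lf)^2 + 2K\phi \; \geq \; 0.$$

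I then apply the parabolic maximum principle for the drift operator $L + 2\Gamma(f,\cdot) - \partial_t$. At $t=0$, $\Phi = -a(0)\phi \leq 0$. If $\Phi$ attained a positive interior maximum at $(x_0,t_0)$, then $\Gamma(f,\Phi) = 0$ and $(L-\partial_t)\Phi \leq 0$ there, forcing $K\phi(x_0,t_0) \leq 0$, so $u(x_0,t_0) = M$; but at a spatial maximum of $u$ all first-order quantities vanish, so $\Gamma(f) = \Gamma^Z(f) = 0$ at $(x_0,t_0)$, whence $\Phi(x_0,t_0)=0$, contradicting positivity.

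\emph{Main obstacle.} The non-compactness of $\M$ may prevent the supremum of $\Phi$ from being attained. To overcome this I would perturb $\Phi \rightsquigarrow \Phi - \eta(1+t)$, localize against the cutoff sequence $h_k$ of (H.1), and exploit the boundedness of $\phi$ together with the $L^2$-integrability of $\sqrt{\Gamma(u-\epsilon)}$ and $\sqrt{\Gamma^Z(u-\epsilon)}$ provided by $u \in \mathcal A_\epsilon$ to apply a compact-support maximum principle, sending first $k \to \infty$ and then $\eta \to 0$. This parallels Kotschwar's adaptation of Hamilton's closed-manifold argument to the non-compact Riemannian setting, with the generalized CD inequality and the auxiliary $\Gamma^Z$-term replacing the Bochner formula and the Ricci lower bound.
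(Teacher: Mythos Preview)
Your computation is correct: with $f=\ln u$, $\phi=\ln(M/u)$, and the functional $\Phi = t\Gamma(f)+\rho_2 t^2\Gamma^Z(f)-a(t)\phi$, the CD inequality with $\nu=\rho_2 t$ together with (H.2) does yield $(L-\partial_t)\Phi+2\Gamma(f,\Phi)\geq \frac{2t}{d}(Lf)^2+2K\phi\geq 0$, and the $\Gamma^Z$ and $\Gamma(f)$ coefficients cancel exactly as you claim. Your maximum-principle sketch then closes the argument, with the usual perturbation $\Phi-\eta(1+t)$ handling both the non-compactness and the borderline case $K=0$ (your unperturbed contradiction step as written only bites when $K>0$). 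The localization via (H.1) and the $\mathcal A_\epsilon$ integrability is delicate but standard.

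The paper's proof is entirely different and much shorter: it first invokes Theorem~\ref{thm:uniq of positive} (already established in Section~4 without using Hamilton's inequality) to reduce to $u=P_t f$, and then applies the \emph{reverse log-Sobolev inequality} of Baudoin--Bonnefont \cite{BB},
\[
t\,P_t f\,\Gamma(\ln P_t f)+\rho_2 t^2\,P_t f\,\Gamma^Z(\ln P_t f)\leq\Bigl(1+\tfrac{2\kappa}{\rho_2}+2Kt\Bigr)\bigl(P_t(f\ln f)-(P_t f)\ln P_t f\bigr),
\]
together with $P_t(f\ln f)\leq (\ln M)\,P_t f$. The log-Sobolev inequality in \cite{BB} is proved by semigroup interpolation (differentiating $s\mapsto P_{t-s}(P_s f\ln P_s f)$), which sidesteps the maximum principle entirely and hence all non-compactness issues. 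Your approach is more self-contained---it needs neither the uniqueness theorem nor the external result from \cite{BB}---and makes transparent why the coefficients $\rho_2 t^2$ and $1+2\kappa/\rho_2+2Kt$ arise. The paper's approach, in contrast, outsources the hard analysis to \cite{BB} and exploits the already-proved uniqueness to work only with the well-behaved semigroup $P_t f$, where (H.3) guarantees boundedness of $\Gamma(P_t f)$ and $\Gamma^Z(P_t f)$.
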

\begin{proof}
By Theorem \ref{thm:uniq of positive}, it suffices to show that the estimate holds for $u=P_t f \in \mathcal A_\epsilon >0$. (See Remark \ref{rmk:A epsilon} for $\mathcal A_\epsilon$.)
We apply the reverse log-Sobolev inequality in \cite{BB}, i.e.,
\begin{align*}
 t P_t f \Gamma(\ln P_t f) + \rho_2 t^2 P_t f \Gamma^Z (\ln P_t f) \leq \left(1+\frac{2\kappa}{\rho_2}+2Kt \right) \left( P_t (f\ln f) - (P_t f) \ln P_t f\right).
\end{align*}
Then our desired inequality is instantly deduced by $P_t (f\ln f) \leq (P_t f) \ln M $ and $\rho_2 t^2 P_t f \Gamma^Z \geq 0$.
\end{proof}

\begin{lemma} \label{lem:gradient of log heat kernel}
If $\M$ satisfies $CD(-K,\rho_2,\kappa,d)$, there exists $C_h=C_h(\rho_2,\kappa,d)>0$, $t>0, x,y\in\M$,
\begin{align*}
 \Gamma_x(\ln p_t(x,y)) \leq \frac{C_h}{t} \left( 1+\frac{2\kappa}{\rho_2}+Kt \right)  \left( K(t+ d(x,y)^2)+ \frac{d(x,y)^2}{t}\right).
\end{align*}
\end{lemma}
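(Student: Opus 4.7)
The strategy is to apply Hamilton's inequality (\ref{ineq:Hamiltons inequality}) to a time-shifted heat semigroup whose value at a given time coincides with $p_t(x,y)$, and then to estimate the resulting $\ln(M/u)$ using the two-sided Gaussian bounds (\ref{ineq:hk LB}), (\ref{ineq:hk UB2}) together with the doubling property (\ref{ineq:doubling2}). Fix $y \in \M$ and $t > 0$, and set $v(x,s) := p_{s+t/2}(x,y)$ on $\M \times (0, t/2]$. Regularizing by $v_\delta := v + \delta$ (still a solution of the heat equation since $L1 = 0$) places us in the class $\mathcal A_\delta$ on which Hamilton's inequality is available, and we pass to the limit $\delta \to 0$ at the end. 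Since $p_\tau(y,y)$ is decreasing in $\tau$, the upper bound (\ref{ineq:hk UB2}) gives
\[
 M := \sup_{\M \times (0, t/2]} v \leq \frac{C_5}{\mu(B(y,\sqrt{t/2}))} e^{C_6 K t}.
\]

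Evaluating Hamilton's inequality at $s = t/2$, so that $v(x,t/2) = p_t(x,y)$, yields
\[
 \tfrac{t}{2}\,\Gamma_x\bigl(\ln p_t(x,y)\bigr) \leq \Bigl(1 + \tfrac{2\kappa}{\rho_2} + Kt\Bigr) \ln\frac{M}{p_t(x,y)}.
\]
It therefore suffices to show that $\ln\bigl(M/p_t(x,y)\bigr) \leq C\bigl(K(t + d(x,y)^2) + d(x,y)^2/t\bigr)$ for some constant $C = C(\rho_2, \kappa, d)$. Substituting the bound on $M$ and the Gaussian lower bound (\ref{ineq:hk LB}) for $p_t(x,y)$, the log-ratio splits into the explicit Gaussian contributions $CKt$, $CKd(x,y)^2$, $\tfrac{D}{2d}\cdot d(x,y)^2/t$, plus a volume-ratio term $\ln\bigl(\mu(B(x,\sqrt t))/\mu(B(y,\sqrt{t/2}))\bigr)$. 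The latter is controlled via the inclusion $B(x,\sqrt t) \subset B(y, \sqrt t + d(x,y))$ and (\ref{ineq:doubling2}) applied with $\lambda = (\sqrt t + d(x,y))/\sqrt{t/2}$: the exponential factor in (\ref{ineq:doubling2}) is bounded by $\exp(C K(t + d(x,y)^2))$, while the polynomial factor $\lambda^Q$ contributes $Q\ln\bigl(\sqrt 2 (1 + d(x,y)/\sqrt t)\bigr)$ to the logarithm, which is absorbed into $d(x,y)^2/t$ via the elementary inequality $\ln(1+a) \leq \tfrac12(1 + a^2)$.

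The main technical obstacle is the bookkeeping needed to absorb the absolute constants that accumulate from the heat kernel prefactors, the doubling constant $C_{d1}$, and the polynomial factor $\lambda^Q$: under the assumption $K > 0$ one has $K(t + d(x,y)^2) + d(x,y)^2/t \geq Kt > 0$, so any fixed constant can be absorbed into a sufficiently large $C_h = C_h(\rho_2, \kappa, d)$. A secondary point is justifying the use of Hamilton's inequality for the shifted heat kernel; the regularization $v_\delta = v + \delta \in \mathcal A_\delta$ used above followed by letting $\delta \to 0$ takes care of this. Combining all the estimates and multiplying by $2/t$ produces the claimed inequality.
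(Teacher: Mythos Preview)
Your strategy is exactly the paper's: shift time via $v(x,s)=p_{t/2+s}(x,y)$, cap $v$ by a constant $M$, apply Hamilton's inequality at $s=t/2$, and control $\ln(M/p_t(x,y))$ through the Gaussian bounds (\ref{ineq:hk LB}), (\ref{ineq:hk UB2}) and the doubling estimate (\ref{ineq:doubling2}). The decomposition of $\ln(M/p_t)$ into Gaussian exponents plus a volume-ratio term, and the handling of the polynomial factor $\lambda^Q$ via $\ln(1+a)\le\tfrac12(1+a^2)$, also match the paper.

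There is, however, one step where your justification does not work as written. Your bound
\[
M=\sup_{\M\times(0,t/2]}v\;\le\;\frac{C_5}{\mu(B(y,\sqrt{t/2}))}\,e^{C_6Kt}
\]
is not a consequence of ``$p_\tau(y,y)$ is decreasing in $\tau$'': monotonicity of the on-diagonal kernel says nothing about $\sup_{x\in\M}p_\tau(x,y)$. To extract a bound uniform in $x$ from (\ref{ineq:hk UB2}) (with the ball centred at $y$, by symmetry) one needs the exponent $C_6K(\tau+d(x,y)^2)-d(x,y)^2/(6\tau)$ to be bounded above independently of $d(x,y)$, which forces $C_6K<1/(6\tau)$, i.e.\ $\tau<t_0:=1/(6C_6K)$. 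The paper makes exactly this restriction $0<t<t_0$ explicit before bounding $M$, and then invokes $\|P_s\|_{\infty\to\infty}\le 1$ to propagate the bound in $s$. Without it your displayed $M$ can fail. A related cosmetic point: the absorption argument ``$K(t+d^2)+d^2/t\ge Kt>0$, so any fixed constant can be absorbed into $C_h=C_h(\rho_2,\kappa,d)$'' does not give a $t$-independent $C_h$, since $Kt\to 0$ as $t\to 0$; the paper's displayed $C_h$ has the same issue, and in fact both proofs are only clean in the regime $d(x,y)$ bounded away from $0$ (which is all that Lemma~\ref{lem:gradient of log heat kernel2} requires).
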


\begin{proof}
Let $t>0$ and $y\in \M$. Let $u(x,s) := p_{\frac{t}{2} +s} (x,y)$, then $u$ is a smooth, positive solution to the heat equation.
From the heat kernel upper bound (\ref{ineq:hk UB2}), for $t_0= \frac{1}{6C_6 K}$, $0<t<t_0$, $0\leq s\leq \frac{t}{2}$, $\forall x\in \M $,
\begin{align*}
 u(x,s) & \leq \frac{C_{5}}{\mu(B(y,\sqrt{\frac{t}{2}+s})) } \exp\left( C_{6} K(\frac{t}{2}+s +d(x,y)^2)-\frac{d(x,y)^2}{6(\frac{t}{2}+s)} \right) \\
    & \leq \frac{C_6 e^{1/6}}{\mu(B(y,\sqrt{\frac{t}{2}+s}))} \leq \frac{C_6'}{\mu(B(y,\sqrt{\frac{t}{2}}))} =M.
\end{align*}
Moreover $u(x,s)\leq M$ for all $s>0$, since $\|P_t\|_{\infty\rightarrow \infty} \leq 1$ for any $t>0$.

\
By the Hamilton's inequality (\ref{ineq:Hamiltons inequality}), the heat kernel lower bound (\ref{ineq:hk LB}) for $u(x,s)$ with $s=\frac{t}{2}$ and
the doubling property (\ref{ineq:doubling2}),
\begin{align*}
 &\frac{t}{2} \Gamma( \ln u(x,\frac{t}{2}) ) \leq \left( 1+\frac{2\kappa}{\rho_2}+Kt \right) \ln \left( \frac{M}{ u(x,\frac{t}{2}) } \right) \\
 &\leq \left( 1+\frac{2\kappa}{\rho_2}+Kt \right) \frac{C_h}{2} \left( K(t+ d(x,y)^2)+ \frac{d(x,y)^2}{t} \right) ,
\end{align*}
where $C_h = 2 \ln\left(C_6' C_1^{-1}C_{d1} 2^{Q/2}\right)\max(\frac{D}{2d}, \frac{4C_{d2}}{3}+C_2)$.
\end{proof}

\
If we combine the previous lemma with (\ref{ineq:hk UB2}), we obtain the following simpler statement for small $t$, which will be useful in the next section.
\begin{lemma} \label{lem:gradient of log heat kernel2}
Assume $CD(-K,\rho_2,\kappa,d)$. For any $R>0$, $\beta>0$ and $x_0\in \M$, there exists $C>0$, $t_0>0$ such that for $d(x,y)\geq R/4$, $0<t<t_0$,
\begin{align*}
 \sqrt{\Gamma_y(p_t(x,y) )} \leq \frac{ C e^{-\beta R^2} }{\mu(B(x,\sqrt t ))} .
\end{align*}
\end{lemma}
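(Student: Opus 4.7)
The key idea is to convert the gradient of $p_t$ into a gradient of $\ln p_t$ times $p_t$ itself, then use Lemma \ref{lem:gradient of log heat kernel} for the log gradient and the Gaussian upper bound (\ref{ineq:hk UB2}) for $p_t$. For small $t$ the Gaussian decay $\exp(-d(x,y)^2/(6t))$ in (\ref{ineq:hk UB2}) beats every other factor that appears and supplies the decay $e^{-\beta R^2}$.

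First, by symmetry of the heat kernel $p_t(x,y)=p_t(y,x)$ and the chain rule,
\[
 \sqrt{\Gamma_y(p_t(x,y))} \;=\; p_t(x,y)\,\sqrt{\Gamma_y(\ln p_t(y,x))}.
\]
Apply Lemma \ref{lem:gradient of log heat kernel} with the roles of $x$ and $y$ interchanged to obtain
\[
 \Gamma_y(\ln p_t(y,x)) \;\leq\; \frac{C_h}{t}\Bigl(1+\tfrac{2\kappa}{\rho_2}+Kt\Bigr)\Bigl(K(t+d(x,y)^2)+\tfrac{d(x,y)^2}{t}\Bigr).
\]
Restricting to $t\le 1$, this right-hand side is dominated by $C't^{-2}(1+d(x,y)^2)$ for a constant $C'$ depending only on $K,\rho_2,\kappa,d$. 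Inserting (\ref{ineq:hk UB2}) for $p_t(x,y)$, we get
\[
 \sqrt{\Gamma_y(p_t(x,y))} \;\leq\; \frac{C_5 \sqrt{C'}}{\mu(B(x,\sqrt t))}\,\frac{\sqrt{1+d(x,y)^2}}{t}\,\exp\!\Bigl(C_6 K(t+d(x,y)^2)-\tfrac{d(x,y)^2}{6t}\Bigr).
\]

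The next step is to absorb all ``bad'' factors into the Gaussian decay. For $t<t_0$ with $t_0\le (12 C_6 K)^{-1}$, the coefficient of $d(x,y)^2$ in the exponent satisfies $C_6 K-\frac{1}{6t}\le -\frac{1}{12t}$, so the whole exponential is at most $e^{C_6 K}\exp(-d(x,y)^2/(12t))$. Split this decay as $\exp(-d(x,y)^2/(24t))\cdot\exp(-d(x,y)^2/(24t))$: the first copy absorbs the polynomial factor $t^{-1}\sqrt{1+d(x,y)^2}$ into a bounded constant (uniformly in $x,y$ and $t\in(0,t_0]$), while the second copy, together with the hypothesis $d(x,y)\ge R/4$, gives
\[
 \exp\!\Bigl(-\tfrac{d(x,y)^2}{24t}\Bigr) \;\leq\; \exp\!\Bigl(-\tfrac{R^2}{384\,t}\Bigr).
\]
Finally, choosing $t_0$ also so that $t_0\le (384\beta)^{-1}$ yields $\exp(-R^2/(384 t))\le e^{-\beta R^2}$, and we collect the constants into a single $C=C(R,\beta,K,\rho_2,\kappa,d)>0$.

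\textbf{Expected obstacle.} No single step is deep; the only care needed is bookkeeping of the four competing terms in the exponent (the $C_6Kd(x,y)^2$ growth from (\ref{ineq:hk UB2}), the $Kt$ and $1/t$ factors in the log-gradient bound, and the Gaussian decay). The essential point is that the term $-d(x,y)^2/(6t)$ is the only one that can produce the desired exponential decay in $R$, and one must split it into two copies—one to dominate the polynomial prefactor, the other to produce $e^{-\beta R^2}$—which forces $t_0$ to depend on both $\beta$ and $K$.
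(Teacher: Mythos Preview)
Your proposal is correct and follows exactly the approach the paper indicates: combine Lemma~\ref{lem:gradient of log heat kernel} with the Gaussian upper bound (\ref{ineq:hk UB2}) via $\sqrt{\Gamma_y(p_t)} = p_t\,\sqrt{\Gamma_y(\ln p_t)}$, then let the Gaussian decay $\exp(-d(x,y)^2/(6t))$ absorb the polynomial prefactors and produce $e^{-\beta R^2}$ for $t$ small. The paper does not spell out the bookkeeping you provide, but your splitting of the exponential and your choice of $t_0$ depending on both $K$ and $\beta$ are precisely what is needed; just note that the ``uniform'' constant absorbing $t^{-1}\sqrt{1+d(x,y)^2}$ does depend on $R$, which is consistent with your final $C=C(R,\beta,K,\rho_2,\kappa,d)$.
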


\subsection{proof of Theorem \ref{thm:uniq of Lp}, $p=1$}

\
Prior to the uniqueness of $L^1$ solution for the heat equation, we prove the uniqueness of $L^1$ harmonic function.
Basic idea of the proof comes from \cite{Li}.

\begin{remark}
We assume the fixed curvature bound $\rho_1=-K$ instead of the negative quadratic lower bound of Ricci curvature of \cite{Li}.
\end{remark}

\begin{theorem} \label{thm:uniq of L1 harmonic}
If $\M$ satisfies $CD(-K,\rho_2,\kappa,d)$, then
any $L^1$ non-negative subharmonic function on $\M$ must be identically constant. \\
In particular, any $L^1$ harmonic function on $\M$ must be identically constant.
\end{theorem}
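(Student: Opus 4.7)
The plan is to argue in two stages: first use the heat semigroup to reduce the $L^1$ non-negative subharmonic hypothesis to $u$ being harmonic, then conclude constancy by a global Caccioppoli-type estimate combined with the $L^1$ mean value inequality of Corollary~\ref{thm:Lp mean value p<2}.

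For the first stage, since $Lu \geq 0$ and $L$ commutes with the heat semigroup on sufficiently regular functions, $\frac{d}{dt} P_t u = P_t L u \geq 0$, so $t \mapsto P_t u$ is pointwise non-decreasing and $P_t u \geq u$ for every $t > 0$. Under $CD(-K,\rho_2,\kappa,d)$ stochastic completeness gives $P_t 1 \equiv 1$, and combined with self-adjointness of $P_t$ this yields
\[
\int_\M P_t u \, d\mu \;=\; \int_\M u \, P_t 1 \, d\mu \;=\; \int_\M u \, d\mu .
\]
The pointwise inequality together with the equality of integrals forces $P_t u = u$ almost everywhere for each $t > 0$, so differentiating at $t = 0$ gives $Lu = 0$. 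Hypoellipticity of $L$ then promotes $u$ to a smooth non-negative harmonic function in $L^1(\M)$.

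For the second stage, test $Lu = 0$ against $\phi = h_k^2 u$, where $\{h_k\}$ is the exhausting sequence from (H.1). Integration by parts and Cauchy--Schwarz yield the Caccioppoli estimate
\[
\int_\M h_k^2 \, \Gamma(u) \, d\mu \;\leq\; 4 \int_\M u^2 \, \Gamma(h_k) \, d\mu .
\]
The $L^1$ mean value inequality (Corollary~\ref{thm:Lp mean value p<2}) applied on unit balls gives the pointwise bound $u(x) \leq C(K,\rho_2,\kappa,d)\, \|u\|_1/\mu(B(x,1))$, which combined with comparability of unit-ball volumes promotes $u$ to $L^\infty(\M)$. The right-hand side of the Caccioppoli inequality is then dominated by $\|u\|_\infty \|u\|_1 \|\Gamma(h_k)\|_\infty$, which vanishes by (H.1), and monotone convergence on the left gives $\int_\M \Gamma(u) \, d\mu = 0$. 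Hence $\Gamma(u) \equiv 0$ and $u$ is constant. For the addendum about $L^1$ harmonic functions (not assumed non-negative), apply the above to $|u|$, which is subharmonic, and then conclude constancy of $u$ from constancy of $|u|$ by smoothness and continuity.

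The main obstacle is the $L^1$-to-$L^\infty$ promotion: the mean value constant in Corollary~\ref{thm:Lp mean value p<2} carries the factor $e^{c_m K R^2}$, and a uniform lower bound on $\mu(B(x,1))$ is not automatic from the curvature hypothesis alone. A robust alternative that avoids boundedness is to work directly from $u = P_t u$ and the pointwise estimate
\[
\sqrt{\Gamma(u)(x)} \;\leq\; \int_\M \sqrt{\Gamma_x p_t(x,y)} \, u(y) \, d\mu(y) ,
\]
splitting the integral at $d(x,y) = R$: the near-field part is controlled by Lemma~\ref{lem:gradient of log heat kernel}, while the exponential decay $e^{-\beta R^2}$ of Lemma~\ref{lem:gradient of log heat kernel2} combined with $u \in L^1$ dominates the far-field part. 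Optimising $R$ and $t$ against the $K$-dependent constants would give $\Gamma(u)(x) = 0$ pointwise. Throughout, the crux is to balance the $e^{cKR^2}$ factors in the mean value and heat-kernel bounds against the available decay of $\sqrt{\Gamma_x p_t}$ away from the diagonal.
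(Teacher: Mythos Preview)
Your proposal has two genuine gaps, and the paper's own argument is instructive on both.

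\textbf{First stage.} The commutation $\frac{d}{dt}P_t u = P_t(Lu)$ that you invoke is exactly the technical heart of the matter: for $u$ merely in $L^1$ with $Lu\geq 0$, there is no ``sufficiently regular'' black box to appeal to. The paper devotes most of its proof to justifying the integration by parts
\[
\int_\M (L_y p_t(x,y))\,g(y)\,d\mu(y)=\int_\M p_t(x,y)\,Lg(y)\,d\mu(y),
\]
by inserting cut-offs $\psi_R$ and showing that the boundary terms
\[
\int_{B(x_0,R+1)\setminus B(x_0,R)}\Big(g\sqrt{\Gamma(p_t(x,\cdot))}+p_t(x,\cdot)\sqrt{\Gamma(g)}\Big)\,d\mu
\]
vanish as $R\to\infty$. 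This is where the $L^1$ mean value estimate (Corollary~\ref{thm:Lp mean value p<2}), the heat kernel upper bound~\eqref{ineq:hk UB2}, and the gradient bound of Lemma~\ref{lem:gradient of log heat kernel2} all enter, and where the $e^{cKR^2}$ factors are carefully absorbed by the heat-kernel Gaussian decay. You have not supplied this step; without it the inequality $P_t u\geq u$ is unjustified.

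\textbf{Second stage.} You correctly identify that your Caccioppoli route stalls because $u\in L^1$ does not give $u\in L^\infty$: there is no uniform lower bound on $\mu(B(x,1))$ under $CD(-K,\rho_2,\kappa,d)$, and the mean value constant blows up like $e^{c_mKR^2}$. The paper bypasses $L^\infty$ entirely by a neat truncation trick: once the first stage shows that \emph{every} non-negative $L^1$ subharmonic function is in fact harmonic, apply this to $(g-\gamma)_+$ for each $\gamma>0$. Then $\min(g,\gamma)=g-(g-\gamma)_+$ is harmonic, hence $C^\infty$ by hypoellipticity, for every $\gamma>0$. This is impossible unless $g$ is constant. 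No global $L^\infty$ bound is needed.

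Your proposed alternative via $\sqrt{\Gamma(u)(x)}\leq\int\sqrt{\Gamma_x p_t(x,y)}\,u(y)\,d\mu(y)$ does not close: the far-field term decays by Lemma~\ref{lem:gradient of log heat kernel2}, but the near-field contribution is of order $C(t)\|u\|_{L^1}/\mu(B(x,\sqrt{t}))$ and does not tend to zero, so you only get a pointwise bound on $\sqrt{\Gamma(u)}$, not its vanishing.
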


\begin{proof}
Let $g\in L^1(\M)$ be a non-negative function satisfying $Lg\geq 0$, i.e. subharmonic. For any $t>0$,
\begin{align*}
L P_t g (x) =& \int_\M  (L_x p_t(x,y)) g(y) d\mu(y) \\
 = \int_\M & ( \frac{\partial}{\partial t} p_t(x,y) ) g(y) d\mu(y) = \int_\M ( L_y p_t(x,y) ) g(y) d\mu(y).
\end{align*}

\
We claim the following integration by parts.
\begin{align} \label{claim:IBP}
\int_\M ( L_y p_t(x,y) ) g(y) d\mu(y) = \int_\M p_t(x,y) L g(y) d\mu(y).
\end{align}

\
To justify the claim, we observe the following
\begin{align}
\nonumber & \left| \int_\M  \psi_R(y) \big[ g(y) L_y p_t(x,y) - p_t(x,y) L g(y) \big] d\mu(y) \right| \\
\nonumber & = \left| \int_\M -\big[ \Gamma( \psi_R g , p_t(x,\cdot)) - \Gamma( \psi_R p_t(x,\cdot ) , g ) \big] d\mu \right| \\
\nonumber & = \left| \int_\M -\big[ g \Gamma( \psi_R , p_t(x,\cdot)) - p_t(x,\cdot ) \Gamma( \psi_R  , g ) \big]d\mu \right| \\
 & \label{claim:IBP2} \leq \int_{B(x_0,R+1)\setminus B(x_0,R)}  C \left( g \sqrt{\Gamma( p_t(x,\cdot))} + p_t(x,\cdot ) \sqrt{\Gamma( g )} \right)d\mu ,
\end{align}
where $\psi_R \geq 0 $ is a Lipschitz continuous cut-off function
satisfying $\psi_R |_{B(x_0,R)}=1$, $ \supp \psi_R \subset B(x_0,R+1)$
and $\sqrt{\Gamma(\psi_R)} \leq C$ almost everywhere for some $C>0$ which is independent to $R>0$.(See Remark \ref{remark:cutoff}.)

\
It suffices to show that both integrals on the right-hand side vanish as $R \rightarrow \infty$. We can consider
$R$ large enough so that $x\in B(x_0,R/4)$.


\
Let $\varphi$ be a cut-off function for an annulus satisfying $\varphi|_{B(x_0,R+1)\setminus B(x_0,R)} = 1$, $\varphi|_{B(x_0,R-1)\cup (\M\setminus B(x_0,R+2))} = 0$
and $\sqrt{\Gamma(\varphi)} \leq C$ almost everywhere. By the subharmonicity of $g$,
\begin{align*}
0 & \leq \int_\M \varphi^2 g Lg d\mu = -2 \int_\M \varphi g \Gamma(\varphi,g) d\mu - \int_\M \varphi^2 \Gamma(g) d\mu \\
& \leq \int_\M \left[ -\frac{1}{2}( 2g\sqrt{\Gamma(\varphi)} -\varphi\sqrt{\Gamma(g)} )^2 + 2 \Gamma(\varphi) g^2 - \frac{1}{2} \varphi^2 \Gamma (g) \right]d\mu \\
&\leq  2\int_\M \Gamma(\varphi) g^2 d\mu - \frac{1}{2} \int_\M \varphi^2 \Gamma (g) d\mu.
\end{align*}

\
Therefore, applying $L^1$ mean value estimate Corollary \ref{thm:Lp mean value p<2} to $g$,
\begin{align*}
 &\int_{B(x_0,R+1)\setminus B(x_0,R)} \Gamma(g) d\mu \leq 4 \int_\M \Gamma(\varphi) g^2 d\mu \leq 4C^2\int_{B(x_0,R+2)} g^2 d\mu \\
 & \leq 4C^2  \|g \|_{L^1} \sup_{B(x_0,R+2)} g(y) \leq C' e^{c' K R^2} \frac{1}{\mu(B(x_0,2R+4))} \| g \|_{L^1}^2 .
\end{align*}

\
By Schwarz inequality,
\begin{align*}
\int_{B(x_0,R+1)\setminus B(x_0,R)}& \sqrt{\Gamma(g)} d\mu \leq \left( \int_{B(x_0,R+1)\setminus B(x_0,R)} \Gamma(g) d\mu \right)^{\frac{1}{2}} (\mu(B(x_0,2R+4)))^{\frac{1}{2}}\\
& =C e^{c K R^2}\| g \|_{L^1}.
\end{align*}

\
In addition to this estimate, to bound the second integration in (\ref{claim:IBP2}) we consider the upper bound of heat kernel (\ref{ineq:hk UB2}):
\begin{align*}
 p_t(x,y) & \leq \frac{C_{5}}{\mu(B(x,\sqrt t)) } \exp\left( C_{6} K(t+d(x,y)^2)-\frac{d(x,y)^2}{6t} \right).
\end{align*}

\
Combining the above two inequalities, we estimate the second term of (\ref{claim:IBP2}) for small $0<t<T=T(K,\rho_2,\kappa,d)$.
\begin{align*}
 &\int_{B(x_0,R+1)\setminus B(x_0,R)} p_t(x,\cdot) \sqrt{\Gamma( g )} d\mu \\
 &\quad \leq \left(\sup_{y\in B(x_0,R+1)\setminus B(x_0,R)} p_t(x,y) \right) \int_{B(x_0,R+1)\setminus B(x_0,R)} \sqrt{\Gamma( g )} d\mu \\
 &\quad \leq \frac{C }{\mu(B(x,\sqrt t)) }  \exp\left(-\alpha \frac{R^2}{t}\right) \| g \|_{L^1} \xrightarrow{R\rightarrow \infty} 0,
\end{align*}
with $d(x_0,x)\leq \frac{R}{4} $, $\frac{R}{2} \leq d(x,y) \leq 4R $ and $\alpha>0$.\\


\
For the first term of (\ref{claim:IBP2}), $L^1$ mean value estimate for $g$ yields
\begin{align*}
 &\int_{B(x_0,R+1)\setminus B(x_0,R)} g \sqrt{\Gamma(p_t(x,\cdot))} d\mu \\
 &\quad \leq \left(\sup_{B(x_0,R+1)\setminus B(x_0,R)} g \right) \int_{B(x_0,R+1)\setminus B(x_0,R)} \sqrt{\Gamma(p_t(x,\cdot))} d\mu\\
 &\quad \leq \left( \frac{C e^{cKR^2}}{ \mu(B(x_0,2R+2))}\|g\|_{L^1} \right) \int_{B(x_0,R+1)\setminus B(x_0,R)} \sqrt{\Gamma(p_t(x,\cdot))} d\mu.
\end{align*}
If we apply Lemma \ref{lem:gradient of log heat kernel2} for $\beta >cK$,
\begin{align*}
 &\quad \leq \left( \frac{\mu(B(x_0,R+1)) }{ \mu(B(x_0,2R+2))}\|g\|_{L^1} \right) \frac{C e^{-\beta' R^2}}{ \mu(B(x,\sqrt t ))}
  \leq \|g\|_{L^1} \frac{C e^{-\beta' R^2}}{ \mu(B(x,\sqrt t ))}  \xrightarrow{R\rightarrow \infty} 0,
\end{align*}
where $0<t<T=T(K,\rho_2,\kappa,d)$ small enough and $\beta'>0$.\\

\
Therefore, as $R\rightarrow \infty$, the integration of (\ref{claim:IBP2}) vanishes as we desired, and we proved our claim (\ref{claim:IBP}).\\

\
Now since the integration by part (\ref{claim:IBP}) holds for small $t$, we have
\begin{align*}
 \frac{\partial}{\partial t} P_t g = L P_t g = P_t (Lg) \geq 0.
\end{align*}
And by the semigroup property, $P_t g(x)\geq g(x)$ for all $t>0$, $x\in \M$.

\
On the other hand, by the stochastic completeness (\cite{BG}) of $P_t$, $\| P_t g \|_{L^1} = \| g \|_{L^1} $. Therefore $P_t g = g$, i.e. $g$ is harmonic.\\

\
For any constant $\gamma>0$, $(g-\gamma)_+ = \max(0,g-\gamma)\leq g$ is also a non-negative $L^1$ subharmonic function. And by the same argument, it is harmonic.
$\min(g,\gamma) = g - (g-\gamma)_+ $ is also non-negative $L^1$ harmonic function. Observe that $\min(g,\gamma) \in C^\infty(\M)$ for any $\gamma>0$ by the hypoellipticity of $L$.
This is not possible unless $g$ is constant.

\
Finally, any harmonic function $u\in L^1(\M)$ is identically constant since $|u|$ is non-negative $L^1$ subharmonic function which must be constant by the above.
\end{proof}

\
With the uniqueness of $L^1$ harmonic function, we are ready to prove $L^1$ uniqueness of the solution for the subelliptic heat equation.
\begin{theorem}
Let $\M$ satisfy $CD(-K,\rho_2,\kappa,d)$. Let $v : \M \times [0,\infty) \rightarrow \R $ be a non-negative function satisfying
\begin{align*}
 &\left( L-\frac{\partial}{\partial t} \right) v(x,t) \geq 0, \quad \quad \| v(\cdot,t)\|_{L^1(\M)} < \infty, \quad \forall t>0, \\
 & \| v(\cdot,t) \|_{L^1(\M)} \xrightarrow{t\rightarrow 0} 0,
\end{align*}
then $v(x,t) \equiv 0 $ on $\M \times (0,\infty)$.
\end{theorem}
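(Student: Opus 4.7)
The plan is a dual version of the minimality principle from the positive case: I test $v(\cdot, s)$ against the semigroup-transported test function $P_{T-s}\phi$ for a compactly supported non-negative $\phi$ and show that the resulting time-functional is non-increasing. This is the time-dependent analogue of the $L^1$-harmonic argument in Theorem~\ref{thm:uniq of L1 harmonic}.

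Fix $T > 0$ and $\phi \in C_0^\infty(\M)$ with $\phi \geq 0$, and set
\[F(s) := \int_\M v(x,s)\, P_{T-s}\phi(x)\, d\mu(x), \qquad s \in (0,T).\]
Since $P_t 1 = 1$ and $P_t$ preserves positivity, $P_{T-s}\phi \leq \|\phi\|_\infty$, hence
\[F(s) \leq \|\phi\|_\infty \, \|v(\cdot, s)\|_{L^1(\M)} \longrightarrow 0 \quad \text{as } s \to 0^+.\]
The goal is to show $F$ is non-increasing on $(0,T)$; then $F(T) = \int_\M v(\cdot, T)\phi\, d\mu \leq \lim_{s\to 0^+} F(s) = 0$. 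Since $\phi \in C_0^\infty(\M)$ non-negative is arbitrary and $v \geq 0$, this forces $v(\cdot, T) \equiv 0$ for every $T > 0$, which is the claim (hypoellipticity of $L-\partial_t$ turns the $L^1$ vanishing into a pointwise one).

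The formal monotonicity comes from the subsolution hypothesis $\partial_s v \leq Lv$ and self-adjointness of $L$:
\[F'(s) = \int_\M (\partial_s v)\, P_{T-s}\phi\, d\mu - \int_\M v\, L(P_{T-s}\phi)\, d\mu \leq \int_\M \bigl[(Lv)\, P_{T-s}\phi - v\, L(P_{T-s}\phi)\bigr]\, d\mu = 0.\]
Making this rigorous requires the spatial integration by parts $\int_\M Lv \cdot P_{T-s}\phi\, d\mu = \int_\M v \cdot L(P_{T-s}\phi)\, d\mu$, which is delicate because $v$ carries only an $L^1$ bound. Following the cutoff bookkeeping in the proof of Theorem~\ref{thm:uniq of L1 harmonic}, I insert $\psi_R$ from Remark~\ref{remark:cutoff}; the error reduces to two annular integrals on $B(x_0,R+1) \setminus B(x_0,R)$. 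The first, $\int v\, \sqrt{\Gamma(P_{T-s}\phi)}\, d\mu$, is killed by integrating Lemma~\ref{lem:gradient of log heat kernel2} against $\phi$: for $y$ in the annulus $\sqrt{\Gamma_y p_{T-s}(x,y)}$ has Gaussian decay in $d(x,y)$ that overwhelms $\|v(\cdot, s)\|_{L^1}$. The second, $\int P_{T-s}\phi\, \sqrt{\Gamma(v)}\, d\mu$, is handled by combining Gaussian decay of $P_{T-s}\phi$ on distant annuli (from (\ref{ineq:hk UB2})) with a space-time Caccioppoli estimate: multiplying $Lv - \partial_s v \geq 0$ by $\psi_R^2 v$ and integrating on $\M \times [s_1, s_2]$ bounds the double integral of $\psi_R^2 \Gamma(v)$ by a double integral of $\Gamma(\psi_R)\, v^2$ plus boundary-in-time terms.

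The main obstacle is this second boundary bound, since $v$ has no a priori gradient control. Two maneuvers resolve it. First, I work with the time-integrated form $F(s_2) - F(s_1) = \int_{s_1}^{s_2} F'(s)\, ds$ rather than pointwise $F'$, so that the Caccioppoli estimate's time-integrated structure can be used directly. Second, I regularize by $v_\epsilon := P_\epsilon v$; since $P_\epsilon$ commutes with $L$, $v_\epsilon$ remains a non-negative heat subsolution, smooth by hypoellipticity, with pointwise gradient bounds inherited from the heat kernel. The whole argument is transparent for $v_\epsilon$, and the $L^1$-contractivity of $P_\epsilon$ (stochastic completeness) lets me pass to $\epsilon \to 0^+$ to recover the conclusion for $v$.
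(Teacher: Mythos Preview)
Your duality approach is sound and genuinely different from the paper's argument. The paper does not test $v$ against $P_{T-s}\phi$ at all; instead it sets $\psi_\epsilon(x,t)=\max\bigl(0,\,v(x,t+\epsilon)-P_t(v(\cdot,\epsilon))(x)\bigr)$, which is again a non-negative $L^1$ heat subsolution with zero initial data, and then \emph{time-integrates}: $f(x)=\int_0^T\psi_\epsilon(x,t)\,dt$ is a non-negative $L^1$ function with $Lf=\psi_\epsilon(\cdot,T)\ge 0$, i.e.\ genuinely subharmonic in $x$. This lets the paper invoke Theorem~\ref{thm:uniq of L1 harmonic} as a black box, forcing $f$ constant, hence $\psi_\epsilon(\cdot,T)=0$, hence $v(\cdot,t+\epsilon)\le P_t(v(\cdot,\epsilon))\le \|p_t(x,\cdot)\|_\infty\|v(\cdot,\epsilon)\|_{L^1}\to 0$ as $\epsilon\to 0$. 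All the delicate integration-by-parts and mean-value work is thus done once, in the stationary theorem, rather than redone in the parabolic setting.

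Your route trades that reduction for a direct monotone functional, which is conceptually attractive and avoids the auxiliary $\psi_\epsilon$ and $f$. Two points to tighten. First, both Lemma~\ref{lem:gradient of log heat kernel2} and the Gaussian upper bound \eqref{ineq:hk UB2} only give decay that beats the $e^{cKR^2}$ growth (from doubling and from your pointwise bound $v_\epsilon(y,\cdot)\lesssim \mu(B(y,\sqrt{\epsilon}))^{-1}\|v\|_{L^1}$) when $T-s$ is below a threshold depending on $K,\rho_2,\kappa,d$; so your argument only runs on a short interval $[0,T_0]$ and you must iterate. Second, the Caccioppoli step bounds $\int\!\!\int \varphi^2\Gamma(v_\epsilon)$ by $\int\!\!\int \Gamma(\varphi)\,v_\epsilon^2$ plus a boundary term in time, and you should say explicitly how $\int_{\text{annulus}}v_\epsilon^2$ is controlled: you need $\int v_\epsilon^2\le(\sup_{\text{annulus}}v_\epsilon)\,\|v_\epsilon\|_{L^1}$ together with the pointwise bound above, which produces the $e^{cKR^2}$ factor that the Gaussian decay must absorb. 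In the paper's route this role is played by the elliptic $L^1$ mean-value inequality (Corollary~\ref{thm:Lp mean value p<2}) applied to the subharmonic $f$, which is unavailable to you because $v(\cdot,s)$ is not subharmonic at fixed $s$; your $P_\epsilon$-regularization is the correct substitute.
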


\begin{proof}
For any $\epsilon>0$, denote
\begin{align*}
 \psi_\epsilon(x,t) = \max\big( 0, v(x,t+\epsilon)-P_t ( v(\cdot,\epsilon)) \big) .
\end{align*}
Then it follows that $\psi_\epsilon\geq0$, $\lim_{t\rightarrow0} \psi_\epsilon(x,t)=0$, $\left( L-\frac{\partial}{\partial t}\right)\psi_\epsilon\geq 0$.

\
Fix $T>0$. Define
\[
f(x) = \int_0^T \psi_\epsilon (x,t) dt,
\]
which satisfies $ Lf(x) = \psi_\epsilon(x,T)-\psi_\epsilon(x,0)\geq0$.

\
The assumption $v(\cdot,t)\in L^1(\M)$ yields $\int_0^T \int_\M |v(x,t+\epsilon)| d\mu(x) dt  <\infty$. Together with $\int_0^T \int_\M P_t v(x ,\epsilon) d\mu dt \leq T\int_\M v(x,\epsilon)d\mu(x) <\infty $, we obtain $\|f\|_{L^1(\M)} <\infty $.

\
Now $f$ is non-negative $L^1$ subharmonic function, so that we can apply Theorem \ref{thm:uniq of L1 harmonic} to $f$ and conclude $f$ is identically constant.
This implies $0=Lf(\cdot)=\psi_\epsilon(\cdot,T)$ for arbitrary $T>0$. Hence for any $t>0$,
\begin{align*}
 & v(x,t+\epsilon) \leq P_t (v(\cdot,\epsilon ))(x) \\
 & \quad \leq \| p_t(x,\cdot)\|_\infty \| v(\cdot,\epsilon) \|_{L^1} \leq M \| v(\cdot,\epsilon) \|_{L^1} \xrightarrow{\epsilon\rightarrow 0} 0,
\end{align*}
where the uniform bound for $\| p_t(x,\cdot)\|_\infty$ is found in Lemma \ref{lem:gradient of log heat kernel}.

\
Therefore non-negative $v(x,t)$ must be zero for all $(x,t)\in \M \times (0,\infty)$.
\end{proof}

\begin{proof}[Proof of Theorem \ref{thm:uniq of Lp}, $p=1$]

\
For any $L^1$ solution $u$ of $\left(L-\frac{\partial}{\partial t}\right)u=0 $ with the initial condition $u\xrightarrow{L^1} f \in L^1(\M)$ as $t\rightarrow 0$,
\begin{align*}
 v(x,t) := \left| u(x,t) - P_t f (x) \right|
\end{align*}
will be a non-negative $L^1$ subsolution of the heat equation with $v\xrightarrow{L^1} 0$ as $t\rightarrow 0$.

\
By the previous theorem, $v\equiv 0$ on $\M \times (0,\infty) $. Therefore, $u$ is uniquely determined to be $P_t f$.

\end{proof}

\begin{remark}
In \cite{AgLe},\cite{AgLe2}, the measure contractive definition of Ricci tensor bound and volume comparison theorem (which is not yet established in our framework)
are introduced in three dimensional sub-Riemannian spaces. This measure contraction property is extended to higher dimensions in \cite{LeeLZ}.

\
One can find the uniqueness theorem of the positive solution in symmetric local Dirichlet spaces, provided a local parabolic Harnack inequality in \cite{ElSc}.
\end{remark}


\begin{acknowledgements}
The author is grateful to his advisor, Fabrice Baudoin, for suggesting the problem and for his advice, numerous guidances throughout this work.
\end{acknowledgements}


\begin{thebibliography}{99}

\bibitem{AgLe2} P. Agrachev, P. Lee: Bishop and Laplacian Comparison Theorems on Three Dimensional Contact Subriemannian Manifolds with Symmetry, arXiv:1105.2206, preprint (2011)

\bibitem{AgLe} P. Agrachev, P. Lee: Generalized Ricci curvature bounds on three-dimensional contact sub-Riemannian manifolds, arXiv:0903.2550, preprint (2009)


\bibitem{BE} D. Bakry, M. \'Emery: Diffusions hypercontractives, S\'eminaire de probabilit\'es, XIX, 1983/84, 177-206,  Lecture Notes in Math., 1123, Springer, Berlin, (1985)

\bibitem{BB} F. Baudoin, M. Bonnefont: Log-Sobolev inequalities for subelliptic operators
satisfying a generalized curvature dimension inequality, Journ. Func. Anal. 262, no.6, 2646-2676 (2012)

\bibitem{BBG} F. Baudoin, M. Bonnefont, N. Garofalo: A sub-Riemannian curvature-dimension
inequality, volume doubling property and the Poincar\'e inequality, arXiv:1007.1600, to appear in Math. Ann. (2013)

\bibitem{BBGM} F. Baudoin, M. Bonnefont, N. Garofalo, I. Munive: Volume and distance comparison theorems for sub-Riemannian manifolds, arXiv:1211.0221, submitted (2012)

\bibitem{BG} F. Baudoin, N. Garofalo: Curvature-dimension inequalities and Ricci
lower bounds for sub-Riemannian manifolds with transverse symmetries, arXiv:1101.3590, submitted (2011)

\bibitem{BG_Riesz} F. Baudoin, N. Garofalo: A Note on the Boundedness of Riesz Transform for Some Subelliptic Operators,
International Mathematics Research Notices, rnr271, 24 pages, doi:10.1093/imrn/rnr271, (2012)

\bibitem{BauWa} F. Baudoin, J. Wang: Curvature dimension inequalities and subelliptic heat kernel gradient bounds on contact manifolds,
to appear in Potential Analysis, arXiv:1211.3778 (2013)

\bibitem{BauKim} F. Baudoin, B. Kim: Sobolev, Poincar\'e and isoperimetric  inequalities  for subelliptic diffusion operators satisfying a generalized curvature dimension
inequality, to appear in Revista Matematica Iberoamericana, arXiv:1203.3789, (2012)

\bibitem{Bon} J.M. Bony: Principe du maximum, in\'egalite de Harnack et unicit\'e du probl\`eme de Cauchy pour les op\'erateurs elliptiques d\'eg\'en\'er\'es,
Ann. Inst. Fourier 19(1), 277-304 (1969)

\bibitem{Bu} P. Buser, A note on the isoperimetric constant, Ann. Sci. Ecole Norm. Sup. (4) 15, no. 2, 213-230, (1982)

\bibitem{CGL} G. Citti, N. Garofalo, and E. Lanconelli: Harnack's inequality for sum of squares of vector fields plus a potential,
Amer. J. Math. 115, no. 3, 699-734 (1993)

\bibitem{DON} H. Donnelly: Uniqueness of positive solutions of the heat equation, Proc. Amer. Math. Soc. 99 , no. 2, 353-356 (1987)

\bibitem{DT} S. Dragomir, G, Tomassini: Differential geometry and analysis on CR manifolds, Progress in Mathematics, 246. Birkh\"{a}user Boston, Inc., Boston, MA (2006)

\bibitem{ElSc} N. Eldredge, L. Saloff-Coste: Widder's representation theorem for symmetric local Dirichlet spaces, arXiv:1204.1926, to appear in J. Theoret. Probab. (2012)

\bibitem{FP1} C. Fefferman, D.H. Phong: Subelliptic eigenvalue problems, Conference on harmonic analysis in honor of Antoni Zygmund, Vol I, II
(Chicago, III, 1981), 590-606, Wadsworth Math. Ser., Wadsworth, Belmont, CA, (1983)

\bibitem{Fo} G.B. Folland: A fundamental solution for a subelliptic operator, Bull. Amer. Math. Soc. 79, 373-376 (1973)

\bibitem{GN2} N. Garofalo, D.M. Nhieu: Isoperimetric and Sobolev inequalities for Carnot-Carath\'{e}odory spaces and the
existence of minimal surfaces, Comm. Pure Appl. Math., \textbf{49}, 1081-1144 (1996)

\bibitem{GN} N. Garofalo, D.M. Nhieu: Lipschitz continuity, global smooth approximations and extension theorems for Sobolev functions in Carnot-Carath\'{e}odory
spaces, J. Anal. Math., \textbf{74}, 67-97 (1998)

\bibitem{Gryg} A. Grigor'yan: Heat kernel and analysis on manifolds, AMS/IP Studies in Advanced Mathematics, 47. AMS, Province, RI, International Press, Boston, MA, (2009)

\bibitem{GySa} P. Gyrya, L. Saloff-Coste: Neumann and Dirichlet heat kernels in inner uniform domains, Asterisque No. 336, viii+144 (2011)

\bibitem{Ham} R. Hamilton: A matrix Harnack estimate for the heat equation, Comm. Anal. Geom. 1, no.1, 113-126 (1993)

\bibitem{Je} D. Jerison: The Poincar\'e inequality for vector fields satisfying H\"ormander's condition, Duke Math. J. 53, no. 2, 503-523 (1986)

\bibitem{JSC} D. Jerison, A. S\'{a}nchez-Calle: Subelliptic second order differential operators, Lectures. Notes in Math., 1277, 46-77 (1987)

\bibitem{Kot} B. Kotschwar: Hamilton's gradient estimate for the heat kernel on complete manifolds, Proc. Amer. Math. Soc. 135, no.9, 3013-3019 (2007)

\bibitem{KS} S. Kusuoka, D. Stroock: Applications of the Malliavin calculus III, J. Fac. Sci. Univ. Tokyo Sect. IA Math. 34, no. 2, 391-442 (1987)

\bibitem{LeeLZ} P. Lee, C. Li and I. Zelenko: Measure contraction properties of contact sub-Riemannian manifolds with symmetry, arXiv:1304.2658, preprint (2013)

\bibitem{LY} P. Li, S.T. Yau: On the parabolic kernel of the Schr\"{o}dinger operator, Acta Math. 156, 153-201 (1986)

\bibitem{Li} P. Li: Uniqueness of $L^1$ solutions for the Laplace equation and the heat equation on Riemannian manifolds, J. Diff. Geom. 20, no.2, 447-457 (1984)

\bibitem{Mo1} J. Moser: On Harnack's theorem for elliptic differential equations, Comm. Pure Appl. Math. 14, 577-591 (1961)

\bibitem{Mo2} J. Moser: A Harnack inequality for parabolic differential equations, Comm. Pure Appl. Math. 17, 101-134 (1964)

\bibitem{NSW} A. Nagel, E.M. Stein, S. Wainger: Balls and metrics defined by vector fields. I. Basic properties, Acta Math. 155, no. 1-2, 103-147 (1985)

\bibitem{PS} R. S. Phillips, L. Sarason: Elliptic-parabolic equations of the second order, J. Math. Mech. 17, 891-917 (1967/1968)

\bibitem{SC} L. Saloff-Coste: Aspect of Sobolev-type inequalities, London Math. Soc. Lecture
Notes Series 289. Cambridge University Press (2002)

\bibitem{SC2} L. Saloff-Coste: A note on Poincar\'e, Sobolev, and Harnack inequalities, Internat. Math. Res. Notices, no. 2, 27-38 (1992)

\bibitem{SC3} L. Saloff-Coste: Uniformly elliptic operators on Riemannian manifolds, J. Differential Geom. 36, no. 2, 417-450 (1992)

\bibitem{Str} R. Strichartz, Sub-Riemannian geometry, J. Differential Geom. 24, no. 2, 221-263 (1986)

\bibitem{St1} K.T. Sturm: Analysis on local Dirichlet spaces. I, J. Reine Angew. Math. 456, 173-196 (1994)

\bibitem{St2} K.T. Sturm: Analysis on local Dirichlet spaces. II, Osaka J. Math. 32, no. 2, 275-312 (1995)

\bibitem{St3} K.T. Sturm: Analysis on local Dirichlet spaces. III, J. Math. Pures Appl. (9) 75, no. 3, 273-297 (1996)

\bibitem{Va} N. Varopoulos: Hardy-Littlewood theory for semigroups, J. Funct. Anal. 63, no. 2, 240-260 (1985)






\end{thebibliography}
\end{document}